\newtheorem{theorem}{Theorem}[section]
\newtheorem{corollary}[theorem]{Corollary}
\newtheorem{proposition}[theorem]{Proposition}
\newtheorem{lemma}[theorem]{Lemma}
\theoremstyle{definition}
\newtheorem{definition}[theorem]{Definition}
\newtheorem{problem}[theorem]{Problem}
\newtheorem{remark}[theorem]{Remark}
\begin{document}

\title{On Loeb and sequential spaces in $\mathbf{ZF}$}
\author{Kyriakos Keremedis and Eliza Wajch}
\maketitle

\begin{abstract}
A topological space is called Loeb if the collection of all its non-empty
closed sets has a choice function. In this article, in the absence of the
axiom of choice, connections between Loeb and sequential spaces are
investigated. The following main theorems are proved in $\mathbf{ZF}$%
:\smallskip\ 

(i) If $\mathbf{X}$ is a Cantor completely metrizable second-countable
space, then $\mathbf{X}^{\omega}$ is Loeb.\smallskip

(ii) If a sequential, sequentially locally compact space $\mathbf{X}$ has the property that every infinite countable collection of non-empty closed subsets of $\mathbf{X}$ has a choice function, the Cartesian product $\mathbf{X}\times\mathbf{Y}$ of $\mathbf{X}$ with any sequential space $\mathbf{Y}$ is sequential. The Cartesian product of
a sequential locally countably compact space with any sequential space is
sequential. \smallskip

(iii) If $\mathbf{X}$ is a first-countable regular Loeb space, then $\mathbf{%
X}$ is sequential if and only if, for every family $\{A_j: j\in J\}$ for
non-empty sequentially closed subsets of $\mathbf{X}$, there exists a family 
$\{B_j: j\in J\}$ of non-empty well-orderable sets such that $B_j\subseteq
A_j$ for each $j\in J$. In consequence, it is relatively consistent with $%
\mathbf{ZF}$ that $\mathbb{R}^{\omega}$ is sequential, while $\mathbf{CAC}(%
\mathbb{R})$ fails.\smallskip

(iv) If $\mathbb{R}$ is sequential, then every second-countable compact
Hausdorff space is sequential. In consequence, the Hilbert cube $%
[0,1]^{\omega}$ is sequential if and only if $\mathbb{R}$ is
sequential.\smallskip

It is also proved that, in some models of $\mathbf{ZF}$, a countable product
of Cantor completely metrizable second-countable spaces can fail to be Loeb
and it is independent of $\mathbf{ZF}$ that every sequential subspace of $%
\mathbb{R}$ is Loeb. Some other sentences are shown to be independent of $%
\mathbf{ZF}$. Several open problems are posed, among them, the following
question: is $\mathbb{R}^{\omega}$ sequential if $\mathbb{R}$ is
sequential? \bigskip

\noindent \textit{Mathematics Subject Classification (2010):} 54D55, 54E50,
54D45, 54A35, 03E25.\newline
\textit{Keywords}\textbf{:} Axiom of Choice, weak choice principles, Loeb
space, sequential space, Cantor complete metric space.
\end{abstract}

\section{Introduction}

In this paper, the intended context for reasoning and statements of theorems
is the Zermelo-Fraenkel set theory $\mathbf{ZF}$ without the axiom of choice 
$\mathbf{AC}$. The system $\mathbf{ZF+AC}$ is denoted by $\mathbf{ZFC}$. A
good introduction to $\mathbf{ZF}$ is given in \cite{ku}. Let us recall that
if $J$ is a non-empty set and $\mathcal{A}=\{A_j: j\in J\}$ is a collection of non-empty sets, then every
element of the product $\prod_{j\in J} A_j$ is called a \textit{choice function%
} of $\mathcal{A}$, while every choice function of $\{A_j: j\in J^{\star}\}$ for an infinite subset $J^{\star}$ of $J$ 
is called a \textit{partial choice function} of $\mathcal{A}$.

For a topological space $\mathbf{X}=\langle X, \tau\rangle $, let $Cl(%
\mathbf{X})$ be the collection of all closed sets of $\mathbf{X}$ and let $%
Lo(\mathbf{X})$ be the collection of all choice functions of $Cl(\mathbf{X}%
)\setminus\{\emptyset\}$. The space $\mathbf{X}$ is said to be \textit{Loeb}
if either $X=\emptyset$ or $Lo(\mathbf{X})\neq\emptyset$ (see \cite{kertach}%
). If $\mathbf{X}$ is a non-empty Loeb space, then every function from $Lo(%
\mathbf{X})$ is called a \textit{Loeb function of} $\mathbf{X}$. As usual,
given a topological property $\mathcal{T}$, we say that a metric space $%
\langle Y, d\rangle$ has $\mathcal{T}$ if the topological space $\langle Y,
\tau(d)\rangle$ has $\mathcal{T}$ where $\tau(d)$ is the topology induced by
the metric $d$. In particular, a metric space $\langle Y, d\rangle$ is
called \textit{Loeb} if the topological space $\langle Y, \tau(d)\rangle$ is
Loeb.

In \cite{loeb}, P. A. Loeb proved in $\mathbf{ZF}$ that if $\kappa $ is an
infinite cardinal number of von Neumann (that is, $\kappa $ is an initial
ordinal number of von Neumann), while $\{\mathbf{X}_{i}:i\in \kappa \}$ is a
family of compact spaces such that the collection $\{Lo(\mathbf{X}_{i}):i\in
\kappa \}$ has a choice function, then the Tychonoff product $\mathbf{X}%
=\prod_{i\in \kappa }\mathbf{X}_{i}$ is compact. The significance of Loeb's
result for metric spaces is that one does not need any form of the axiom of
choice to prove that countable products of second-countable compact metric
spaces are compact. In particular, the Hilbert cube $[0,1]^{\omega }$ and
the Cantor cube $2^{\omega }$ are compact in $\mathbf{ZF}$. Newer results
concerning Loeb spaces were obtained in \cite{kyr1} and \cite{kertach}.

We recall that a subset $A$ of a topological space $\mathbf{X}$ is called 
\textit{sequentially closed} in $\mathbf{X}$ if all limits of convergent in $%
\mathbf{X}$ sequences of points of $A$ belong to $A$. We denote by $SC(%
\mathbf{X})$ the collection of all sequentially closed sets of $\mathbf{X}$.
A topological space $\mathbf{X}$ is called \textit{sequential} if $SC(%
\mathbf{X})=Cl(\mathbf{X})$.

A natural generalization of the notion of a Loeb space is the following new
concept:

\begin{definition}
\label{d1.1} (i) A topological space $\mathbf{X}$ will be called \textit{%
s-Loeb} if the collection $SC(\mathbf{X})\backslash \{\emptyset \}$ has a
choice function. \newline
(ii) If $\mathbf{X}$ is an s-Loeb space, then every choice function of $SC(%
\mathbf{X})$ will be called an \textit{s-Loeb} function of $\mathbf{X}$.
\end{definition}

Of course, every sequential s-Loeb space is Loeb. However, a Loeb space need
not be sequential. In this paper, we show a number of sufficient conditions
on a Loeb space to be sequential. To this aim, in Section 2, we prove that
if $\mathbf{X}$ is a Cantor completely metrizable second-countable space,
then $\mathbf{X}^{\omega }$ is Loeb. In particular, $\mathbb{R}^{\omega }$, the Baire space $\mathbb{N}^{\omega}$
and the Hilbert cube $[0,1]^{\omega }$ are Loeb spaces. In Section 2, we
also prove that it is independent of $\mathbf{ZF}$ that, for every
collection $\{\mathbf{X}_{n}:n\in\omega\}$ of Cantor completely metrizable
second-countable spaces, the product $\prod_{n\in\omega}\mathbf{X}_{n}$ is
Loeb. Section 3 is devoted to sequential spaces and their Cartesian
products. We give new $\mathbf{ZF}$-proofs to several famous theorems on
Cartesian products of sequential spaces because $\mathbf{AC}$ is involved in
their known proofs. We prove that if a sequential locally
sequentially compact space $\mathbf{X}$ is Loeb (or every infinite countable collection of non-empty closed sets of $\mathbf{X}$ has a partial choice function), then $\mathbf{X}\times\mathbf{Y}$ is
sequential for every sequential space $\mathbf{Y}$. We apply this result for
showing that it is provable in $\mathbf{ZF}$ that if $\mathbf{X}$ is a
locally countably compact sequential space, then $\mathbf{X}\times\mathbf{Y}$
is sequential for every sequential space $\mathbf{Y}$. We prove that if $%
\mathbf{X}$ and $\mathbf{Y}$ are sequential Loeb spaces such that $\mathbf{X}$ is 
$T_1$ and $\mathbf{Y}$ is a sequentially compact $T_3$-space, then $\mathbf{X%
}\times\mathbf{Y}$ is sequential and Loeb. In Section 4, among a
considerable number of other results, we show that if $\mathbb{R}$  is
sequential, then so is $\mathbf{X}^{\omega }$ for every second-countable
Hausdorff compact space $\mathbf{X}$. In particular, the Hilbert cube $%
[0,1]^{\omega }$ is sequential if and only if $\mathbb{R}$ is sequential.
This leads to distinct than in \cite{kw} ways of solutions to Problems 6.6
and 6.11 of \cite{OPWZ}. Finally, we prove that a Loeb regular
first-countable space $\mathbf{X}$ is sequential if and only if, for every
family $\{A_j: j\in J\}$ of non-empty sequentially closed subsets of $%
\mathbf{X}$, there exists a family $\{B_j: j\in J\}$ of non-empty
well-orderable sets such that $B_j\subseteq A_j$ for each $j\in J$. This
helps us to prove that it is independent of $\mathbf{ZF}$ that both $\mathbb{%
R}^{\omega}$ is sequential and there exists an infinite countable family of
non-empty subsets of $\mathbb{R}$ which does not have a choice function.

As in $\cite{ku}$, we denote by $\omega$ the set of all finite ordinal
numbers of von Neumann and, if $n\in\omega$, then $n+1=n\cup\{n\}$. We put $%
\mathbb{N}=\omega\setminus\{0\}$ where $0=\emptyset$. All ordinal numbers
considered in this paper are assumed to be von Neumann ordinals. For a set $X
$, let $\mathcal{P}(X)$ denote the power set of $X$.

Below, we list the choice principles we shall be dealing with in the sequel.
For the known forms which can be found in \cite{hr} or \cite{herl}, we give
the form number under which they are recorded in \cite{hr} or we refer to
their definition in \cite{herl}. The symbol $\mathbf{X}$ stands for a
topological or metric space in the forms defined below:

\begin{itemize}
\item $\mathbf{CAC}$ (Form 8 in \cite{hr}, Definition 2.5 in \cite{herl}) :
Every infinite countable family of non-empty sets has a choice function.

\item $\mathbf{PCAC}$ (Form [8B] in \cite{hr}, Definition 2.11(2) in \cite%
{herl}) : Every infinite countable family of non-empty sets has a partial
choice function.

\item $\mathbf{CAC}_{\omega}$ : Every infinite countable family of
non-empty countable sets has a choice function.

\item $\mathbf{CAC}(\mathbb{R})$ (Form 94 in \cite{hr}, Definition 2.9 (1)
in \cite{herl}) : $\mathbf{CAC}$ restricted to countable families of subsets
of $\mathbb{R}$.

\item $\mathbf{CAC}_{\omega }(\mathbb{R})$ : $\mathbf{CAC}$ restricted to
countable families of countable subsets of $\mathbb{R}$.

\item $\mathbf{CAC}_{D}(\mathbb{R})$ (\cite{kw}) : Every family $\mathcal{A}%
=\{A_{n}:n\in \omega\}$ of dense subsets of $\mathbb{R}$ has a partial
choice function.

\item $\mathbf{CAC}_{\omega D}(\mathbb{R})$ : Every family $\mathcal{A}%
=\{A_{n}:n\in \omega\}$ of countable dense subsets of $\mathbb{R}$ has a
partial choice function.

\item $\omega -\mathbf{CAC}(\mathbb{R})$ (Definition 4.56 in \cite{herl}) :
For every family $\mathcal{A}=\{A_{i}:i\in \omega \}$ of non-empty subsets
of $\mathbb{R}$, there exists a family $\mathcal{B}=\{B_{i}:i\in \omega \}$
of countable non-empty subsets of $\mathbb{R}$ such that, for each $i\in
\omega $, $B_{i}\subseteq A_{i}$.

\item $\mathbf{IDI}$ (Form 9 in \cite{hr}, Definition 2.13 in \cite{herl} ) :
Every infinite set is Dedekind-infinite.

\item $\mathbf{IWDI}$ (Form 82 in \cite{hr}) : For every infinite set $X$,
the power set $\mathcal{P}(X)$ is Dedekind-infinite. In other words, every
infinite set is weakly Dedekind-infinite.

\item $\mathbf{IDI}(\mathbb{R})$ (Form 13 in \cite{hr}, Definition 2.13(2)
in \cite{herl}) : Every infinite set of reals is Dedekind-infinite.

\item $\mathbf{BPI}$ (Form 14 in \cite{hr}, Definition 2.15 in \cite{herl}) : Every Boolean algebra has a prime ideal.

\item $\mathbf{L}(\mathbf{X})$ : The space $\mathbf{X}$ is Loeb.

\item $\mathbf{S}(\mathbf{X})$ : The space $\mathbf{X}$ is sequential.

\item $\mathbf{s-Loeb}(\mathbf{X})$ : The family of all non-empty
sequentially closed subsets of $\mathbf{X}$ has a choice function.

\item $\mathbf{CAC}(\mathbf{X},scl)$ : Every countable family of non-empty
sequentially closed subsets of $\mathbf{X}$ has a choice function.

\item $\mathbf{PCAC}(\mathbf{X},scl)$ : Every infinite countable family of
non-empty sequentially closed subsets of $\mathbf{X}$ has a partial choice
function.
\end{itemize}

If $\mathbf{X}=\langle X, d\rangle$ is a metric space, then, for $x\in X$
and a positive real number $\varepsilon$, let $B_d(x, \varepsilon)=\{ y\in
X: d(x, y)<\varepsilon\}$ and $\overline{B}_d(x, \varepsilon)=\{y\in X: d(x,
y)\leq\varepsilon\}$. For $A\subseteq X$, let $\delta_d(A)$ be the diameter
of $A$ in the metric space $\langle X, d\rangle$. As it has been said above,
the topology induced by the metric $d$ is denoted by $\tau(d)$. We recall
the following known definitions:

\begin{definition}
\label{d1.2} It is said that a metric $d$ on a set $X$ is \textit{Cantor
complete} if, for every sequence $(A_n)_{n\in\omega}$ of non-empty closed
subsets of $\langle X, d\rangle$ such that $\lim\limits_{n\to\infty}%
\delta_d(A_n)=0$ and $A_{n+1}\subseteq A_n $ for each $n\in\omega$, the
intersection $\bigcap_{n\in\omega}A_n$ is non-empty.
\end{definition}

\begin{definition}
\label{d1.3} A topological space $\langle X, \tau\rangle$ is called \textit{%
Cantor completely metrizable} if there exists a Cantor complete metric $d$
on $X$ such that $\tau(d)=\tau$.
\end{definition}

In the sequel, boldface letters will denote topological or metric spaces
(called spaces in abbreviation if this does not lead to misunderstanding),
while lightface letters will denote the underlying sets of the spaces under
consideration.

\begin{definition}
\label{d1.4} Let $\mathbf{X}$ be a topological space and let $A\subseteq X$.

\begin{enumerate}
\item[(i)] The \textit{sequential closure} of $A$ in $\mathbf{X}$ is the set 
$\text{scl}_{\mathbf{X}}(A)$ of all points of $X$ which are limits of
convergent in $\mathbf{X}$ sequences of points of $A$.

\item[(ii)] $A$ is called \textit{sequentially open} in $\mathbf{X}$ if $%
X\setminus A$ is sequentially closed in $\mathbf{X}$, that is, if $\text{scl}%
_{\mathbf{X}}(X\setminus A)\subseteq X\setminus A$.

\item[(iii)] The \textit{closure} of $A$ in $\mathbf{X}$ is denoted by $%
\text{cl}_\mathbf{X}(A)$.

\item[(iv)] $\mathbf{X}$ is called \textit{Fr\'echet-Urysohn} if $\text{cl}_{%
\mathbf{X}}(B)\subseteq\text{scl}_{\mathbf{X}}(B)$ for each $B\subseteq X$.

\item[(v)] $A$ is called \textit{sequentially compact} in $\mathbf{X}$ if
each sequence of points of $A$ has a subsequence convergent in $\mathbf{X}$
to a point from $A$.
\end{enumerate}
\end{definition}

Other topological notions used in this article, if they are not defined
here, are standard and can be found in \cite{En} or are slight modifications
of notions given in \cite{En}. For instance, contrary to \cite{En}, we omit
separation axioms in the definitions of compact and regular spaces. Our
compact spaces are called \textit{quasi-compact} in \cite{En}.

Given a set $S$, an element $t\in S$ and a family $\{X_{s}:s\in S\}$ of
sets, we denote by $\pi _{t}$ the projection of $\prod_{s\in S}X_{s}$ into $%
X_{t}$ defined by: $\pi _{t}(x)=x(t)$ for each $x\in \prod_{s\in S}X_{s}$.

We recall that if $(\mathbf{X}_n)_{n\in\omega}$ is a sequence of metric
spaces $\mathbf{X}_n=\langle X_n, d_n\rangle$ and, for all $n\in\omega$ and $%
a,b\in X_{n}$, $\rho _{n}(a,b)=\min \{1,d_{n}(a,b)\}$, then, for $%
X=\prod_{n\in\omega}X_n$, the function $d:X\times X\rightarrow \mathbb{R}$, 
given by: 
\begin{equation}
d(x,y)=\sum\limits_{n\in \omega}\frac{\rho_n (x(n),y(n))}{2^{n+1}}  \label{4}
\end{equation}%
for all $x,y\in X$, is a metric on $X$ such that the topology $\tau (d)$
coincides with the product topology of the family of topological spaces $%
\{\langle X_{n},\tau (d_{n})\rangle :n\in\omega\}$. In the sequel, we shall
always assume that whenever a family $\{\mathbf{X}_{n}: n\in\omega\}$ of
metric spaces $\mathbf{X}_n=\langle X_n, d_n\rangle$ is given, then $%
X=\prod_{n\in\omega}X_{n}$ and $\mathbf{X}=\langle X,d\rangle $ where $d$ is
the metric on $X$ defined by (\ref{4}).

If it is not otherwise stated, we consider $\mathbb{R}$ as a metric space
whose metric is induced by the standard absolute value of $\mathbb{R}$.
Moreover, for $n\in\mathbb{N}$, the space $\mathbb{R}^n$ is considered with
the Euclidean metric denoted by $\rho_{e,n}$ here.
As a topological space, $\mathbb{R}^n$ is considered with the natural
topology $\tau_{nat}$ induced by the metric $\rho_{e,n}$.  When $X\subseteq 
\mathbb{R}^n$, then the topological subspace $\mathbf{X}$ of $\mathbb{R}^n$
is the space $\langle X, \tau_{nat\vert_X}\rangle$ where $%
\tau_{nat\vert_X}=\{X\cap U: U\in\tau_{nat}\}$, while the metric subspace $%
\mathbf{X}$ of $\mathbb{R}^n$ is the space $\langle X, \rho_{e,n}\cap ((X\times
X)\times\mathbb{R})\rangle$.

\section{ Cantor completely metrizable and Loeb spaces}

In $\mathbf{ZFC}$, every topological space is Loeb. However, there exist $%
\mathbf{ZF}$-models in which there are non-Loeb metrizable spaces. For
instance, in Cohen's Basic Model $\mathcal{M}1$ in \cite{hr}, the subspace $%
\mathbf{A}$ of all added Cohen reals of $\mathbb{R}$ is not Loeb. Indeed, $A$
is a dense in $\mathbb{R}$ infinite Dedekind-finite set, so the family $%
\{[p,q]\cap A:p,q\in \mathbb{Q},p<q\}$ is a family of non-empty closed
subsets of $\mathbf{A}$ which does not have any choice function. As it was
pointed out in \cite{kyr1}, all well-orderable metrizable spaces, the real
line $\mathbb{R}$ with the usual topology $\tau_{nat}$, all compact metrizable spaces
which have well-orderable bases (cf. \cite{kt}) are among $\mathbf{ZF}$-examples of Loeb spaces. The facts stated in the following proposition were
given in \cite{kyr1}:

\begin{proposition}
\label{p2.1} (i) Every closed subspace of a Loeb space is Loeb.\newline
(ii) Continuous images of Loeb spaces are Loeb.\newline
(ii) $\mathbf{BPI}$ implies that every compact Hausdorff space is Loeb.
\end{proposition}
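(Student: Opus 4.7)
The plan is to handle the three assertions by three short arguments of increasing depth; parts (i) and (ii) rest only on trivial manipulations of Loeb functions, while the third (listed as (ii) in the statement, evidently a typo for (iii)) requires invoking the consequence of $\mathbf{BPI}$ that Tychonoff products of compact Hausdorff spaces remain compact in $\mathbf{ZF}$.

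For (i), let $\mathbf{X}$ be a Loeb space with Loeb function $L:Cl(\mathbf{X})\setminus\{\emptyset\}\to X$, and let $A\subseteq X$ be closed in $\mathbf{X}$. The closed subsets of the subspace $\mathbf{A}$ coincide with the closed subsets of $\mathbf{X}$ that are contained in $A$, so $Cl(\mathbf{A})\setminus\{\emptyset\}\subseteq Cl(\mathbf{X})\setminus\{\emptyset\}$; since $L(C)\in C\subseteq A$ for every non-empty $C\in Cl(\mathbf{A})$, the restriction of $L$ to this subfamily is a Loeb function of $\mathbf{A}$. For (ii), let $f:\mathbf{X}\to\mathbf{Y}$ be a continuous surjection with $\mathbf{X}$ Loeb and Loeb function $L$. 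For each non-empty $B\in Cl(\mathbf{Y})$ the preimage $f^{-1}(B)$ is non-empty by surjectivity and closed in $\mathbf{X}$ by continuity; the assignment $B\mapsto f\bigl(L(f^{-1}(B))\bigr)$ is therefore a Loeb function of $\mathbf{Y}$.

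For (iii), the key tool is the $\mathbf{ZF}$-theorem that $\mathbf{BPI}$ implies the Tychonoff theorem for compact Hausdorff spaces. Let $\mathbf{X}=\langle X,\tau\rangle$ be a non-empty compact Hausdorff space, set $\mathcal{C}=Cl(\mathbf{X})\setminus\{\emptyset\}$, and consider the product $\mathbf{P}=\mathbf{X}^{\mathcal{C}}$, which is compact Hausdorff under $\mathbf{BPI}$. For each $C\in\mathcal{C}$, the set $F_{C}=\pi_{C}^{-1}(C)=\{\varphi\in X^{\mathcal{C}}:\varphi(C)\in C\}$ is closed in $\mathbf{P}$. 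Given any finite subfamily $C_{1},\dots,C_{n}\in\mathcal{C}$, finite choice in $\mathbf{ZF}$ produces representatives $c_{i}\in C_{i}$; defining $\varphi(C_{i})=c_{i}$ and $\varphi(C)=c_{1}$ for every other $C\in\mathcal{C}$ yields an element of $F_{C_{1}}\cap\dots\cap F_{C_{n}}$. Hence $\{F_{C}:C\in\mathcal{C}\}$ has the finite intersection property, and compactness of $\mathbf{P}$ delivers a $\varphi\in\bigcap_{C\in\mathcal{C}}F_{C}$, which is precisely a Loeb function of $\mathbf{X}$.

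The only real obstacle is recalling and properly citing the $\mathbf{ZF}$-result that $\mathbf{BPI}$ yields the Tychonoff theorem for compact Hausdorff spaces; once that is in hand, the finite-intersection-property packaging above is routine, and (i), (ii) are essentially immediate from the definitions.
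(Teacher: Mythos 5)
Your three arguments are all correct. Note, however, that the paper does not prove Proposition \ref{p2.1} at all: it states these facts with a citation to \cite{kyr1}, so there is no in-paper proof to compare against. Your proofs of (i) and (ii) are the standard ones (restricting the Loeb function to $Cl(\mathbf{A})\setminus\{\emptyset\}\subseteq Cl(\mathbf{X})\setminus\{\emptyset\}$ when $A$ is closed, and pulling back along $f^{-1}$ and pushing forward along $f$ for a continuous surjection), and both are airtight. For (iii), your route --- $\mathbf{BPI}$ gives compactness of $\mathbf{X}^{\mathcal{C}}$, the sets $F_C=\pi_C^{-1}(C)$ are closed, finite choice (a $\mathbf{ZF}$-theorem) gives the finite intersection property, and compactness yields a point of $\bigcap_{C\in\mathcal{C}}F_C$, i.e.\ a Loeb function --- is a valid $\mathbf{ZF}$-argument; it is essentially the standard proof that under $\mathbf{BPI}$ the product $\prod_{C\in\mathcal{C}}C$ of non-empty compact Hausdorff subspaces is non-empty, just phrased inside the single compact space $\mathbf{X}^{\mathcal{C}}$. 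The Hausdorff hypothesis is used exactly where it must be, namely to invoke the $\mathbf{BPI}$-version of the Tychonoff theorem, and you correctly dispose of the case $X=\emptyset$ via the definition of Loeb. No gaps.
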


It was proved in \cite{kyr1} that a product of Loeb Hausdorff spaces may
fail to be Loeb in a model of $\mathbf{ZF}$. However, we can prove in $%
\mathbf{ZF}$ the following theorem:

\begin{theorem}
\label{t2.2} If $\mathbf{X}$ is a Cantor completely metrizable second-countable space then $\mathbf{X}^{\omega }$ is Loeb.
\end{theorem}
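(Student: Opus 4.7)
The plan is to fix a Cantor complete metric $d$ on $X$ that induces the topology of $\mathbf{X}$, form the standard product metric $\tilde d$ on $X^{\omega}$ via formula (\ref{4}) (using $\rho_n=\min\{1,d\}$), and construct a Loeb function by a classical Cantor-style shrinking argument. The key enabler is an enumerated countable base for $\mathbf{X}^{\omega}$ obtained from the countable base of $\mathbf{X}$.

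First I would establish as a lemma that $\tilde d$ is itself Cantor complete. Given a decreasing sequence $(A_n)_{n\in\omega}$ of non-empty closed subsets of $\langle X^{\omega},\tilde d\rangle$ with $\delta_{\tilde d}(A_n)\to 0$, I would observe that for each coordinate $k\in\omega$ the sets $B_n^k:=\text{cl}_{\mathbf{X}}(\pi_k(A_n))$ form a decreasing sequence of non-empty closed subsets of $\mathbf{X}$ whose $\rho$-diameters (and hence eventually $d$-diameters) tend to $0$. Cantor completeness of $d$ forces $\bigcap_{n\in\omega} B_n^k$ to consist of a single point $y_k$, which is uniquely determined, so the point $y\in X^{\omega}$ defined by $y(k)=y_k$ is well-defined without any choice. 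To check $y\in A_n$ for a given $n$, I would fix $\varepsilon>0$, choose $K$ with $\sum_{k>K}2^{-(k+1)}<\varepsilon/2$, and then choose $m\ge n$ large enough that $\delta_{\tilde d}(A_m)<\varepsilon/(2(K+1))$. Picking \emph{any} single point $x\in A_m$ (a single existential instance, not a choice principle) and bounding the first $K+1$ coordinates coordinate-wise via $y(k),\pi_k(x)\in B_m^k$ gives $\tilde d(x,y)<\varepsilon$, whence closedness of $A_n$ yields $y\in A_n$.

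With that lemma in hand, I would fix an enumeration $\{V_n:n\in\omega\}$ of the canonical countable base of $\mathbf{X}^{\omega}$ built from an enumerated countable base of $\mathbf{X}$ (finite intersections of cylinders $\pi_k^{-1}(B_m)$; the required enumeration is a definable function of the one on $\mathbf{X}$). Given a non-empty closed $F\subseteq X^{\omega}$, define recursively $F_0=F$ and, assuming $F_n$ is non-empty and closed, let $k_n$ be the least $k\in\omega$ for which $V_k\cap F_n\neq\emptyset$ and $\delta_{\tilde d}(V_k)<1/(n+1)$; set $F_{n+1}=\text{cl}_{\mathbf{X}^{\omega}}(V_{k_n})\cap F_n$. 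Such a least $k$ exists because every point of $F_n$ lies in some basic open set of arbitrarily small $\tilde d$-diameter, and the minimum is definable from the enumeration with no use of choice.

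The sequence $(F_n)_{n\in\omega}$ is then a decreasing chain of non-empty closed sets in $\langle X^{\omega},\tilde d\rangle$ whose diameters tend to $0$. The lemma supplies a unique point $x_F\in\bigcap_{n\in\omega}F_n\subseteq F$, and the assignment $F\mapsto x_F$ is the desired Loeb function of $\mathbf{X}^{\omega}$. I expect the main obstacle to be the opening lemma: proving Cantor completeness of $\tilde d$ in $\mathbf{ZF}$ requires manufacturing one explicit limit point from the coordinate data without invoking a uniform choice across the $A_n$, which is why the candidate $y$ is first pinned down coordinate by coordinate and then verified by a single ``one representative per $\varepsilon$'' argument rather than by picking a sequence $x_n\in A_n$.
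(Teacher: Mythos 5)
Your proposal is correct and follows essentially the same route as the paper: both build a definably enumerated countable base of $\mathbf{X}^{\omega}$, use Cantor completeness of the product metric, and select the unique point of a recursively chosen shrinking chain of basic sets meeting the given closed set, with the least-index selection replacing any appeal to choice. The only difference is that the paper cites \cite{kyr2} for the Cantor completeness of the countable product metric, whereas you prove it inline via the coordinate-wise limit argument; that argument is sound and choice-free as written.
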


\begin{proof} Let us fix a countable base $\mathcal{B}$ of a second-countable Cantor completely metrizable space $\mathbf{X}$. For $n\in\mathbb{N}$, let $\mathcal{A}_n=\{U\times X^{\omega\setminus n}: U\in\mathcal{B}^{n}\}$. Since there is a sequence $(\psi_n)_{n\in\mathbb{N}}$ of injections $\psi_n:\mathcal{A}_n\to\omega$, the set $\mathcal{A}=\bigcup_{n\in\mathbb{N}}\mathcal{A}_n$ is countable. Clearly, $\mathcal{A}$ can serve as an open base of $\mathbf{X}^{\omega}$, so the space $\mathbf{X}^{\omega}$ is second-countable. In \cite{kyr2}, it was shown in $\mathbf{ZF}$ that countable products of Cantor complete metric spaces are Cantor complete metric spaces. This implies that $\mathbf{X}^{\omega}$ is Cantor completely metrizable. Let $d$ be any Cantor complete metric on $X^{\omega}$ which induces the topology of the product $\mathbf{X}^{\omega}$. 
Let $C$ be a non-empty closed subset of $\mathbf{X}^{\omega}$. We show below how to choose an element of $C$. 

Since $\mathcal{A}$ is countable, it is well-orderable. In what follows, we refer to a fixed well-ordering of $\mathcal{A}$. Let $A_1(C)$ be the first element of $\mathcal{A}$ such that $C\cap A_1(C)\neq\emptyset$ and $\delta_d(A_1(C))< 1$. Suppose that, for $n\in\mathbb{N}$, we have already defined $A_n(C)$ such that $C\cap A_n(C)\neq\emptyset$. Let $A_{n+1}(C)$ be the first element of $\mathcal{A}$ such that $\text{cl}_{\mathbf{X}^{\omega}}(A_{n+1}(C))\subseteq A_{n}(C)$, $C\cap A_{n+1}(C)\neq\emptyset$ and $\delta_d(A_{n+1}(C))<\frac{1}{n+1}$. Since $d$ is Cantor complete, the sequence $(A_n(C))_{n\in\mathbb{N}}$, defined above by induction, has the property that the set $C\cap\bigcap_{n\in\mathbb{N}} A_n(C)$ is a singleton. This proves that $\mathbf{X}^{\omega}$ is a Loeb space.
\end{proof}

\begin{corollary}
\label{c2.3} (i) Every closed subspace of $\mathbb{R}^{\omega }$ is Loeb and
every continuous image of $\mathbb{R}^{\omega }$ is Loeb. In particular, $%
\mathbb{R}^{\omega }$, $(0,1)^{\omega }$, the Cantor cube $\{0,1\}^{\omega }$%
, the Hilbert cube $[0,1]^{\omega }$ and the Baire space $\mathbb{N}^{\omega
}$, hence the space of irrationals also, are Loeb.
\end{corollary}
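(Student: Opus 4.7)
The plan is to split the corollary into its two general clauses and then dispatch the five named spaces. First, I note that $\mathbb{R}$ with its usual absolute-value metric is Cantor complete (Cantor's intersection theorem for $\mathbb{R}$ is provable in $\mathbf{ZF}$) and second-countable (the base of open intervals with rational endpoints is countable). Thus Theorem~\ref{t2.2} applied to $\mathbf{X}=\mathbb{R}$ yields that $\mathbb{R}^{\omega}$ is Loeb. Parts (i) and (ii) of Proposition~\ref{p2.1} then immediately give the two general conclusions: every closed subspace of $\mathbb{R}^{\omega}$ is Loeb, and every continuous image of $\mathbb{R}^{\omega}$ is Loeb.

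To handle the five explicit spaces, I realize each of them as either a closed subspace of $\mathbb{R}^{\omega}$ or a continuous image of a space already shown to be Loeb. Since $[0,1]$, $\mathbb{N}$ and $\{0,1\}$ are closed in $\mathbb{R}$, the products $[0,1]^{\omega}$, $\mathbb{N}^{\omega}$ and $\{0,1\}^{\omega}$ are closed in $\mathbb{R}^{\omega}$ under the Tychonoff topology, which settles those three by Proposition~\ref{p2.1}(i). For $(0,1)^{\omega}$, an explicit $\mathbf{ZF}$-definable homeomorphism $\mathbb{R}\to(0,1)$, for instance $x\mapsto (1+e^{-x})^{-1}$, induces coordinate-wise a continuous surjection $\mathbb{R}^{\omega}\to(0,1)^{\omega}$, so Proposition~\ref{p2.1}(ii) finishes this case. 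For the space of irrationals, the classical continued-fraction construction, together with an explicit bijection between $\mathbb{Z}$ and $\mathbb{N}$ to account for the integer part, yields a $\mathbf{ZF}$-definable continuous surjection from $\mathbb{N}^{\omega}$ onto $\mathbb{R}\setminus\mathbb{Q}$; Proposition~\ref{p2.1}(ii) applied to the already-Loeb space $\mathbb{N}^{\omega}$ then completes the proof.

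The only $\mathbf{ZF}$ hygiene to observe is that each surjection used be specified by a single explicit formula, never by arbitrary selections. This is plainly so for the sigmoid and for the continued-fraction expansion, and the closedness assertions are routine product-topology observations; I therefore anticipate no serious obstacle.
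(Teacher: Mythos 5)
Your proof is correct and follows exactly the route the paper intends: the corollary is stated as an immediate consequence of Theorem~\ref{t2.2} (applied to the Cantor complete, second-countable space $\mathbb{R}$) together with Proposition~\ref{p2.1}(i)--(ii), and your realizations of the five named spaces as closed subspaces or continuous (indeed homeomorphic) images are the standard, $\mathbf{ZF}$-definable ones. Nothing to add.
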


\begin{remark}
\label{r2.4} In Theorem \ref{t2.2}, instead of the second countability of $
%c
\mathbf{X}$, we can assume that $\mathbf{X}$ has a well-orderable base. In
particular, if $\mathbf{X}$ is a discrete metric space with a well-orderable
underlying set, then $\mathbf{X}^{\omega }$ is a Loeb space (see, for
instance, \cite{keta}).
\end{remark}

\begin{remark}
\label{r2.5} Suppose that $X$ is a non-empty set such that the collection $%
\mathcal{P}(X)\setminus\{\emptyset\}$ has a choice function $f$. As in
standard proofs that $\mathbf{AC}$ implies that every set is well-orderable,
using $f$, one can easily define, via a straightforward transfinite
induction on the ordinals, a well-ordering of $X$. Let us briefly recall how
to fix in $\mathbf{ZF}$ a well-ordering of $X$. We put $x_{0}=f(X)$ and
suppose that $\alpha$ is an ordinal such that a set $X(\alpha)=\{x_{\gamma}:
\gamma\in\alpha+1\}$ of pairwise distinct elements of $X$ has been defined.
Of course, if $X(\alpha)=X$, then we can fix a well-ordering of $X$. If $%
X(\alpha)\neq X$, we define $x_{\alpha+1}=f(X\setminus X(\alpha))$ and $%
X(\alpha+1)=X(\alpha)\cup\{x_{\alpha+1}\}$. Finally, suppose that $\beta$ is
a limit ordinal such that the sets $X(\alpha)=\{x_{\gamma}:
\gamma\in\alpha+1\}$ of pairwise distinct elements of $X$ have been defined
for each $\alpha\in\beta$. If $X\neq\bigcup_{\alpha\in\beta} X(\alpha)$, we
put $x_{\beta}=f(X\setminus\bigcup_{\alpha\in\beta}X(\alpha))$ and $%
X(\beta)=\{x_{\alpha}: \alpha\in\beta+1\}$ Otherwise, if $X=\{x_{\alpha}:
\alpha\in\beta\}$, we can fix a well-ordering of $X$. There must exist an
ordinal $\beta$ such that either $X=X(\beta)$ or $X=\bigcup_{\alpha\in\beta}
X(\alpha)$.
\end{remark}

\begin{theorem}
\label{t2.6} (a) $\mathbf{AC}$ if and only if every Cantor completely
metrizable space is Loeb.\newline
(b) If, for every sequence $(\mathbf{X}_n)_{n\in \omega}$ of simultaneously
compact and metrizable second-countable spaces, the product $\mathbf{X}%
=\prod_{n\in\omega}\mathbf{X}_{n}$ is metrizable and Loeb, then $\mathbf{CAC}%
_{\omega}$ holds.\newline
(c) If, for every sequence $(\mathbf{X}_n)_{n\in\omega}$ of Cantor
completely metrizable second-countable spaces, the product $\mathbf{X}%
=\prod_{n\in\omega}\mathbf{X}_{n}$ is Loeb, then all countable unions of
countable sets are well-orderable.\newline
(d) $\mathbf{CAC}$ implies that, for every sequence $(\mathbf{X}%
_n)_{n\in\omega}$ of Cantor completely metrizable second-countable spaces,
the product $\mathbf{X}=\prod_{n\in\omega}\mathbf{X}_{n}$ is Cantor
completely metrizable and second-countable, so Loeb.
\end{theorem}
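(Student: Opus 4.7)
The plan is to handle the four parts separately, relying on the constructions of Theorem~\ref{t2.2} and Remark~\ref{r2.5}.

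Parts (a) and (d) are largely routine. For the forward direction of (a), $\mathbf{AC}$ immediately gives a choice function on $Cl(\mathbf{X})\setminus\{\emptyset\}$ for any space $\mathbf{X}$. For the converse, given a family $\{A_j:j\in J\}$ of non-empty sets, I would pass to the disjoint replacement $\{\{j\}\times A_j:j\in J\}$, equip its union $X$ with the $0/1$-discrete metric $d$, and note that $d$ is Cantor complete because any nested chain of non-empty closed sets with $\delta_d(A_n)\to 0$ eventually consists of singletons. The hypothesis then supplies a choice function on $Cl(\langle X,d\rangle)\setminus\{\emptyset\}=\mathcal{P}(X)\setminus\{\emptyset\}$, whose restriction yields the required choice. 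For (d), under $\mathbf{CAC}$ one simultaneously picks a Cantor complete metric $d_n$ and a countable base $\mathcal{B}_n$ for each $\mathbf{X}_n$; formula~\eqref{4} produces a metric on $\mathbf{X}=\prod_{n\in\omega}\mathbf{X}_n$ that induces the product topology and is Cantor complete by the result of \cite{kyr2} quoted inside the proof of Theorem~\ref{t2.2}, while the $\mathcal{B}_n$'s assemble into a countable base of $\mathbf{X}$. The inductive construction inside the proof of Theorem~\ref{t2.2} applies verbatim to any Cantor completely metrizable second-countable space (only a well-orderable base and Cantor completeness of the metric are used), so $\mathbf{X}$ is Loeb.

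Part (c) follows the template of Remark~\ref{r2.5}. Given $\{B_n:n\in\omega\}$ of non-empty countable sets, I would fix fresh points $p_n\notin B_n$ and let $\mathbf{X}_n=B_n\cup\{p_n\}$ carry the discrete metric; being countable and discrete, each $\mathbf{X}_n$ is Cantor completely metrizable and second-countable. The product $\mathbf{X}=\prod_{n\in\omega}\mathbf{X}_n$ contains $(p_n)_{n\in\omega}$ and is Loeb by hypothesis, so I fix a Loeb function $L$ on it. For each $n$ and each non-empty $S\subseteq B_n$, the cylinder $\pi_n^{-1}(S)$ is closed and non-empty in $\mathbf{X}$, so $\alpha_n(S):=\pi_n(L(\pi_n^{-1}(S)))$ is a choice function on $\mathcal{P}(B_n)\setminus\{\emptyset\}$. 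Applying the transfinite recursion of Remark~\ref{r2.5} uniformly in $n$ produces well-orderings $\preceq_n$ of the $B_n$, whose concatenation along $\omega$ well-orders $\bigcup_{n\in\omega}(\{n\}\times B_n)$, transferring through the natural surjection to a well-ordering of $\bigcup_{n\in\omega}B_n$.

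Part (b) is the main obstacle, precisely because $B_n$ cannot be made closed in any compact metrizable space that encodes it injectively. I would take $\mathbf{X}_n=B_n\cup\{\infty_n\}$ to be the one-point compactification of the discrete countable set $B_n$; this space is compact, second-countable (its base of singletons of $B_n$ together with cofinite neighborhoods of $\infty_n$ is countable), and metrizable by Urysohn. By hypothesis, $\mathbf{X}=\prod_{n\in\omega}\mathbf{X}_n$ is metrizable and Loeb, so I fix a bounded metric $d\leq 1$ on $\mathbf{X}$ together with a Loeb function $L$. The trick is to use $d$ to carve out a non-empty \emph{finite} subset of each $B_n$, since finite subsets of $B_n$ are closed in $\mathbf{X}_n$. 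I restrict $d$ along the canonical embedding $\iota_n\colon X_n\to X$ with $\iota_n(y)(m)=\infty_m$ for $m\neq n$ and $\iota_n(y)(n)=y$; the restricted metric $d_n$ induces the compactification topology on $X_n$, so $\{b\in B_n:d_n(b,\infty_n)\geq\varepsilon\}$ is finite for every $\varepsilon>0$ (otherwise an infinite subset of $B_n$ would have to accumulate at $\infty_n$ in the compact $\mathbf{X}_n$). Setting $M_n=\sup_{b\in B_n}d_n(b,\infty_n)>0$ and $S_n=\{b\in B_n:d_n(b,\infty_n)\geq M_n/2\}$, the set $S_n$ is non-empty (by definition of supremum) and finite, hence closed in $\mathbf{X}_n$; thus $\pi_n^{-1}(S_n)$ is a non-empty closed subset of $\mathbf{X}$, and $f(n):=\pi_n(L(\pi_n^{-1}(S_n)))\in B_n$ provides the desired $\mathbf{CAC}_\omega$ choice function on $\{B_n:n\in\omega\}$.
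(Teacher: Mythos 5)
Your proposal is correct and follows essentially the same route as the paper in all four parts: a discrete (hence Cantor completely metrizable) space built from the given family for (a), one-point compactifications together with factor metrics induced from the product metric to carve out a non-empty finite closed piece of each $A_n$ for (b), countable discrete spaces plus the transfinite recursion of Remark \ref{r2.5} for (c), and $\mathbf{CAC}$ to assemble the metrics and bases for (d). The only deviations are cosmetic (e.g., selecting $S_n$ via half the supremum distance rather than the least $i$ with $B_{d_n}(\infty,\frac{1}{i})\neq X_n$).
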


\begin{proof}
(a) ($\rightarrow $) \medskip This is straightforward.

($\leftarrow $) Let $\mathcal{E}$ be a non-empty collection of non-empty sets. Let $X=\bigcup\mathcal{E}$. We notice that the discrete space $\mathbf{X}=\langle X, \mathcal{P}(X)\rangle$ is Cantor completely metrizable. Of course, if $\mathbf{X}$ is Loeb, then $\mathcal{E}$ has a choice function.

(b) Fix a pairwise disjoint family $\mathcal{A}=\{A_{n}: n\in\omega\}$
of non-empty countable sets.  Let $\infty$ be a given element which does not belong to $\bigcup_{n\in\omega} A_n$.  For each $%
n\in\omega$,  let $X_{n}=A_{n}\cup \{\infty \}$. If $A_n$ is finite, let $\mathbf{X}_n$ be the discrete space with the underlying set $X_n$. If $A_n$ is infinite, let $\mathbf{X}_n$  be the Alexandroff
compactification of the discrete space $\mathbf{A}_{n}$. Clearly, $\{\mathbf{%
X}_{n}: n\in \omega\}$ is a family of compact, metrizable, second-countable
spaces. Assuming that the product $\mathbf{X}=\prod_{n\in 
\omega}\mathbf{X}_{n}$ is metrizable and Loeb, we fix a Loeb function $f$ of $\mathbf{X}$ and a metric $d$ on $X$
which induces the product topology of $\mathbf{X}$. Obviously, using $d$, we can easily define a sequence $(d_n)_{n\in\omega}$ such that, for each $n\in\omega$, $d_n$ is a metric on $X_n$ which induces the topology of $\mathbf{X}_n$. For each $n\in\omega$, let $i_n=\min\{i\in\mathbb{N}: B_{d_n}(\infty, \frac{1}{i})\neq X_n\}$ and let $C(n)=\pi_n^{-1}(A_n\setminus B_{d_n}(\infty, \frac{1}{i_n}))$. The sets $C(n)$ are non-empty and closed in $\mathbf{X}$. We define a choice function $g$ for $\mathcal{A}$ by putting $g(A_n)=f(C(n))(n)$ for each $n\in\omega$. \medskip 

(c) Now, we fix $\mathcal{A}=\{A_n: n\in\omega\}$ and $\infty$ as in the proof to (b). We assume that each $A_n$ is a countably infinite set. We put $Y_n=A_n\cup\{\infty\}$ and $\mathbf{Y}_n=\langle Y_n, \mathcal{P}(Y_n)\rangle$. The discrete spaces $\mathbf{Y}_n$ are all Cantor completely metrizable and second-countable. Suppose that $\mathbf{Y}=\prod_{n\in\omega}\mathbf{Y}_n$ is Loeb. Let $f$ be a Loeb function of $\mathbf{Y}$. For each $n\in\omega$, we define a choice function $h_n$ of $\mathcal{P}(A_n)\setminus\{\emptyset\}$ as follows: if $Z\in\mathcal{P}(A_n)\setminus\{\emptyset\}$, then $h_n(Z)=\pi_n(f(\pi_n^{-1}(Z)))$. In much the same way, as in Remark \ref{r2.5}, we can define a sequence $(\leq_n)_{n\in\omega}$ such that each $\leq_n$ is a well-ordering on $A_n$. This implies that $\mathcal{A}$ is well-orderable.\medskip

(d) Assume $\mathbf{CAC}$ and consider any sequence $(\mathbf{X}_n)_{n\in\omega}$ of Cantor completely metrizable second-countable spaces. By $\mathbf{CAC}$, there exist sequences $(d_n)_{n\in\omega}$ and $(\mathcal{B}_n)_{n\in\omega}$ such that, for each $n\in\omega$, $\mathcal{B}_n$ is a countable base of $\mathbf{X}_n$, while $d_n$ is a Cantor complete metric on $X_n$ which induces the topology of $\mathbf{X}_n$. It was shown in \cite{kyr2} that the metric $d$ given by (\ref{4}) is Cantor complete, so $\mathbf{X}=\prod_{n\in\omega}\mathbf{X}_n$ is Cantor completely metrizable. For each $n\in\mathbb{N}$, we define $\mathcal{D}_n=\{U\times\prod_{i\in\omega\setminus n}X_i: U\in\prod_{i\in n}\mathcal{B}_i\}$. Since the collections $\mathcal{D}_n$ are all countable, under $\mathbf{CAC}$, the collection $\mathcal{D}=\bigcup_{n\in\mathbb{N}}\mathcal{D}_n$ is countable. This implies that $\mathbf{X}$ is second-countable. By Theorem \ref{t2.2}, $\mathbf{X}$ is Loeb.
\end{proof}

\begin{remark}
\label{r2.7} Theorem \ref{t2.6} clearly shows that, in some models of $%
\mathbf{ZF}$, countable products of Cantor completely metrizable second-countable spaces need not be Loeb. For instance, in Feferman-Levy
model $\mathcal{M}$9 of \cite{hr}, $\mathbb{R}$ is not well-orderable,
although $\mathbb{R}$ is a countable union of countable sets, so, by Theorem %
\ref{t2.6}(c), there exists in $\mathcal{M}$9 a countable product of Cantor
completely metrizable second-countable spaces which fails to be Loeb. Of
course, in view of Theorem \ref{t2.2}, all finite products of Cantor
completely metrizable second-countable spaces are Loeb in every model of $%
\mathbf{ZF}$.
\end{remark}

Mimicking the proof to Theorem 3.14 of \cite{kw} which asserts that $\mathbf{%
CAC}(\mathbb{R})$ and $\mathbf{CAC}_D(\mathbb{R})$ are equivalent, we can
deduce that the following theorem also holds true in $\mathbf{ZF}$:

\begin{theorem}
\label{t2.8} $\mathbf{CAC}_{\omega}(\mathbb{R})$ and $\mathbf{CAC}_{\omega
D}(\mathbb{R})$ are equivalent.
\end{theorem}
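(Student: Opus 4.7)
The implication $\mathbf{CAC}_{\omega}(\mathbb{R})\Rightarrow\mathbf{CAC}_{\omega D}(\mathbb{R})$ is immediate, since a full choice function is in particular a partial one and a countable dense subset of $\mathbb{R}$ is a countable subset of $\mathbb{R}$. For the converse, my plan is to adapt the scheme of the proof of Theorem 3.14 of \cite{kw}, the only extra care being that the auxiliary objects I construct remain countable.

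Fix once and for all an enumeration $(q_m)_{m \in \omega}$ of $\mathbb{Q}$. To any non-empty $A \subseteq \mathbb{R}$ associate $D(A) = A + \mathbb{Q}$. The map $(a,m) \mapsto a + q_m$ is a canonical surjection $A \times \omega \to D(A)$, so $D(A)$ is countable whenever $A$ is; and for any $a_0 \in A$ and any non-empty open $U \subseteq \mathbb{R}$ the density of $\mathbb{Q}$ gives $m$ with $a_0 + q_m \in U$, so $D(A)$ is dense in $\mathbb{R}$. Crucially, the enumeration of $\mathbb{Q}$ yields a canonical retraction
\[
\pi_A : D(A) \to A, \qquad \pi_A(x) = x - q_{m(x)}, \quad m(x) = \min\{m \in \omega : x - q_m \in A\},
\]
where the minimum exists because $x \in A + \mathbb{Q}$ forces at least one such $m$. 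This is the step that resolves the non-uniqueness of the decomposition $x = a + q$ in $\mathbf{ZF}$ without invoking any choice principle.

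Now assume $\mathbf{CAC}_{\omega D}(\mathbb{R})$ and let $\{A_n : n \in \omega\}$ be a countable family of non-empty countable subsets of $\mathbb{R}$. Fix also injections $\iota_N : \mathbb{R}^{N+1} \to \mathbb{R}$ (for instance, by interleaving binary expansions in a uniform way), set $B_N = \prod_{n \leq N} A_n$ and $D_N = D(\iota_N(B_N))$. Finite products of non-empty countable sets are non-empty and countable in $\mathbf{ZF}$, so each $D_N$ is a non-empty countable dense subset of $\mathbb{R}$. By $\mathbf{CAC}_{\omega D}(\mathbb{R})$, pick a partial choice function $f : J \to \bigcup_N D_N$ with $J \subseteq \omega$ infinite and $f(N) \in D_N$ for $N \in J$. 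For each $N \in J$ let $b_N = \iota_N^{-1}(\pi_{\iota_N(B_N)}(f(N))) \in B_N$, and for each $n \in \omega$ put $g(n) = b_{N(n)}(n)$, where $N(n) = \min\{N \in J : N \geq n\}$; then $g$ is a choice function for $\{A_n\}$, as required. The main obstacle I anticipate is exactly the canonical retraction $\pi_A$, for which the enumeration of $\mathbb{Q}$ is essential; once this is in place the finite-product trick is the standard device to upgrade partial choice to full choice, and the preservation of countability throughout is a routine $\mathbf{ZF}$ computation.
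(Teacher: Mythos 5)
Your proof is correct and is essentially the argument the paper intends: the paper gives no details for Theorem \ref{t2.8}, merely directing the reader to mimic the proof of Theorem 3.14 of \cite{kw}, and your write-up carries out exactly that scheme (finite products $B_N=\prod_{n\le N}A_n$ injected into $\mathbb{R}$, densification by adding $\mathbb{Q}$, and extraction of a full choice function from a partial one), with the extra countability bookkeeping that the $\omega$-subscripted versions require. The key device --- recovering a point of $A$ from a point of $A+\mathbb{Q}$ via the least index in a fixed enumeration of $\mathbb{Q}$ --- is sound in $\mathbf{ZF}$ and is precisely what makes the densification reversible without any appeal to choice.
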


\begin{remark}
\label{r2.9} It is known that the statement \textquotedblleft $\mathbb{R}$
is sequential\textquotedblright\ implies $\mathbf{IDI}(\mathbb{R})$. This,
together with the fact that $\mathbb{R}$ is Loeb in $\mathbf{ZF}$, implies
that if every Loeb metrizable space were sequential, then $\mathbf{IDI}(%
\mathbb{R})$ would hold. Since $\mathbf{IDI}(\mathbb{R})$ fails in Cohen's
Basic Model $\mathcal{M}$1 of \cite{hr}, we see that there exists a
non-sequential Loeb metrizable space in $\mathcal{M}$1.
\end{remark}

The question which pops up at this point is whether it is provable in $\mathbf{ZF}$ that every sequential
metrizable space is Loeb. The following theorem gives a
negative answer to this question:

\begin{theorem}
\label{t2.10} (a) The statement: \textquotedblleft every sequential subspace
of $\mathbb{R} $ is Loeb\textquotedblright\ implies $\mathbf{CAC}_{\omega }(%
\mathbb{R})$.\newline
(b) The statement: \textquotedblleft every sequential metrizable space is
Loeb\textquotedblright\ is equivalent with $\mathbf{AC}$.
\end{theorem}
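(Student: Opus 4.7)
The plan is to handle the two parts separately: part (b) reduces to the discrete-space construction already used in Theorem~\ref{t2.6}(a), while part (a) proceeds by manufacturing a single sequential subspace of $\mathbb{R}$ whose closed sets encode the given family, then routing through Theorem~\ref{t2.8}.

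For (b), the direction $\mathbf{AC}\Rightarrow$ hypothesis is trivial. For the converse, I would use that every discrete space is metrizable (via the $0/1$-metric) and vacuously sequential, since every subset is clopen. The hypothesis thus forces every discrete space to be Loeb, and the argument in the proof of Theorem~\ref{t2.6}(a)($\leftarrow$) then yields $\mathbf{AC}$: for any family $\{A_i:i\in I\}$ of nonempty sets, take $X=\bigcup_i A_i$ and apply a Loeb function of the discrete space $\langle X,\mathcal{P}(X)\rangle$ to each $A_i$.

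For (a), by Theorem~\ref{t2.8} it suffices to derive $\mathbf{CAC}_{\omega D}(\mathbb{R})$. Given a family $\{A_n:n\in\omega\}$ of countable dense subsets of $\mathbb{R}$, I would set $B_n=A_n\cap(n,n+1)$ and $X=\bigcup_{n\in\omega}B_n$, viewed as a subspace of $\mathbb{R}$. Each $B_n$ is a nonempty (by density) countable subset of $(n,n+1)$, and since $X\cap\mathbb{Z}=\emptyset$ and the open intervals $(n,n+1)$ are pairwise disjoint, a direct check gives $B_n=X\cap(n,n+1)=X\cap[n,n+1]$, so each $B_n$ is clopen in $X$ and the family $\{B_n:n\in\omega\}$ partitions $X$ into clopen pieces. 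For each fixed $n$, countability supplies an injection $B_n\hookrightarrow\omega$, hence a well-ordering; from this I would establish the $\mathbf{ZF}$-lemma that every well-orderable metric space is sequential by noting that if $S$ is sequentially closed and $x\in\text{cl}(S)$, then the well-first element $x_k$ of the nonempty set $S\cap B_d(x,1/k)$ defines a sequence in $S$ converging to $x$, forcing $x\in S$. Thus each $B_n$ is sequential, and $X$, being a disjoint union of clopen sequential pieces, is itself sequential: any sequentially closed $S\subseteq X$ meets each $B_n$ in a sequentially closed (hence closed) subset of $B_n$, so $B_n\setminus S$ is open in $B_n$ and thus in $X$, whence $X\setminus S=\bigcup_n(B_n\setminus S)$ is open. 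By the hypothesis of (a), $X$ is Loeb, and a Loeb function applied to each closed set $B_n$ produces an element of $B_n\subseteq A_n$, giving a full (hence partial) choice function for $\{A_n\}$ and establishing $\mathbf{CAC}_{\omega D}(\mathbb{R})$.

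The main obstacle I expect is in part (a): establishing sequentiality of $X$ in $\mathbf{ZF}$ without a uniform family of well-orderings on the $B_n$. The disjoint clopen decomposition is exactly what sidesteps this, because sequentiality of a clopen disjoint sum reduces pointwise to sequentiality of each piece, and for each individual $B_n$ countability alone supplies a well-ordering (even though we cannot select them simultaneously). Part (b) raises no analogous issue, since the discrete topology is both metrizable and sequential for free.
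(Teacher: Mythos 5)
Your proposal is correct. Part (b) is exactly the paper's argument: the discrete space on the union of a choiceless family is metrizable, sequential and not Loeb, and a Loeb function for such a discrete space is a choice function. For part (a), your core construction and key lemma coincide with the paper's --- confine countable subsets of $\mathbb{R}$ to pairwise disjoint open intervals so that they become countable clopen pieces of their union, verify sequentiality pointwise using a well-ordering of the single countable piece containing the point at hand (so that no simultaneous choice of well-orderings is needed), and observe that a Loeb function must select from these closed pieces --- but you route the bookkeeping differently. The paper argues contrapositively: from the failure of $\mathbf{CAC}_{\omega}(\mathbb{R})$ it extracts a pairwise disjoint family with no (partial) choice function and relocates each $A_n$ into $(\frac{1}{2n},\frac{1}{2n-1})$ ``without loss of generality'' via explicit homeomorphisms, obtaining a sequential non-Loeb subspace outright. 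You instead argue the implication forwards, using density of the $A_n$ to guarantee that the traces $A_n\cap(n,n+1)$ are non-empty, and then invoke Theorem \ref{t2.8} to pass from $\mathbf{CAC}_{\omega D}(\mathbb{R})$ back to $\mathbf{CAC}_{\omega}(\mathbb{R})$. What your route buys is that the disjointification comes for free and no WLOG relocation needs justifying; what it costs is the dependence on Theorem \ref{t2.8}, which the paper only states (referring to \cite{kw}) rather than proves. Both are valid $\mathbf{ZF}$-arguments.
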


\begin{proof}
(a) Assume the contrary and fix a pairwise disjoint family $\mathcal{A}%
=\{A_{n}:n\in \mathbb{N}\}$ of countably infinite subsets of $\mathbb{R}$ without a
partial choice function. Without loss of generality we may assume that, for every $%
n\in \mathbb{N}$, $A_{n}\subseteq (\frac{1}{2n}, \frac{1}{2n-1})$. Let $X=\bigcup \mathcal{A%
}$ carry the Euclidean metric. To prove that $\mathbf{X}$ is sequential, let us consider a
non-empty sequentially closed in $\mathbf{X}$ set $G$ and a point $x\in \text{cl}_{\mathbf{X}}(G)$. Then $%
x\in A_{n_x}$ for some $n_x\in \mathbb{N}$. Since $x\in \text{cl}_{\mathbf{X}}(G\cap A_{n_x})$ and 
$A_{n_x}$ is countable, it follows that some sequence of points of $G\cap A_{n_x}
$ converges in $\mathbf{X}$ to $x$. Hence, $x\in G$ because $G$ is sequentially closed in $\mathbf{X}$. Therefore, $\text{cl}_{\mathbf{X}}(G)=G$ and $\mathbf{X}$ is sequential. However, each member of $\mathcal{A}$
is closed in $\mathbf{X}$ but $\mathcal{A}$ has no choice function, so $\mathbf{X}$ is not Loeb.\medskip 

(b) Since $\mathbf{AC}$ implies that every topological space is Loeb, it suffices to show that, in every model of $\mathbf{ZF}$ in which $\mathbf{AC}$ fails, there exists a sequential metrizable space which is not Loeb. To see this, let us suppose that $J$ is a non-empty set and  $\mathcal{C}=\{C_j: j\in J\}$ is a pairwise disjoint family of
non-empty sets. Now, let $\mathbf{Y}=\langle Y, \mathcal{P}(Y)\rangle$ where  $Y=\bigcup \mathcal{C}$%
.  Clearly, the discrete space $\mathbf{Y}$ is a metrizable sequential space which is not Loeb.
\end{proof}

\section{Sequential spaces and their finite products}

We recommend \cite{F1}, \cite{F2} and Section 1.6 of \cite{En} as very good
sources of fundamental facts about sequential spaces in $\mathbf{ZFC}$. It
is known that some theorems on sequential and Fr\'echet-Urysohn spaces, given
in \cite{F1}, \cite{F2} and \cite{En}, fail in some models of $\mathbf{ZF}$
(see, for instance, \cite{gg2} and \cite{herl}). Let us recall that Theorem
4.54 of \cite{herl} shows that $\mathbb{R}$ is Fr\'echet-Urysohn in a model $%
\mathcal{M}$ of $\mathbf{ZF}$ if and only if $\mathbf{CAC}(\mathbb{R})$
holds in $\mathcal{M}$, while Theorem 4.55 of \cite{herl} shows that $%
\mathbb{R}$ is sequential in a model $\mathcal{M}^{\ast}$ of $\mathbf{ZF}$
if and only if the collection of all non-empty complete subspaces of $%
\mathbb{R}$ has a choice function in $\mathcal{M}^{\ast}$. It is known that $%
\mathbb{R}$ is not sequential, for instance, in Cohen's model $\mathcal{M}$1
of \cite{hr}. Fr\'echet-Urysohn spaces are called shortly Fr\'echet spaces,
for instance, in \cite{En}, \cite{F1}, \cite{F2} and \cite{herl}. Problems
6.6 and 6.11 of \cite{OPWZ} that were unsolved in \cite{OPWZ} and have been
solved in \cite{kw} recently, are both relevant to the question whether, for
every positive integer $m$, $\mathbb{R}^m$ is sequential in every model of $%
\mathbf{ZF}$ in which $\mathbb{R}$ is sequential. In this section, we give a
new, quite different than in \cite{kw}, $\mathbf{ZF}$-proof to the theorem
that if $\mathbb{R}$ is sequential, then $\mathbb{R}^m$ is sequential for
every $m\in\mathbb{N}$. Moreover, we offer an overview of some known $%
\mathbf{ZFC}$-results on sequential spaces, especially the ones concerning
Cartesian products, and we give new $\mathbf{ZF}$-proofs to some of them. In
particular, we show that it is provable in $\mathbf{ZF}$ that if $\mathbf{X}$
is a sequential locally countably compact space, then $\mathbf{X}\times%
\mathbf{Y}$ is sequential for every sequential space $\mathbf{Y}$. We also
obtain new $\mathbf{ZF}$-theorems on sequential spaces. Part of our results
involves Loeb spaces. For instance, we prove that if $\mathbf{X}$ is a
locally sequentially compact sequential Loeb space, then $\mathbf{X}\times%
\mathbf{Y}$ is sequential for every sequential space $\mathbf{Y}$. We also
prove that a first-countable, countably compact regular space $\mathbf{X}$
is sequential if and only if $\mathbf{PCAC}(\mathbf{X}, scl)$ holds.

We recall that, for topological spaces $\mathbf{X}$ and $\mathbf{Y}$, a mapping $%
f:\mathbf{X}\to\mathbf{Y}$ is said to be \textit{quotient} if $f$ is a
surjection such that, for every subset $W$ of $\mathbf{Y}$, the set $%
f^{-1}(W)$ is open in $\mathbf{X}$ if and only if $W$ is open in $\mathbf{Y}$%
.

We need the following results of \cite{F1} which are easily provable in $%
\mathbf{ZF}$:

\begin{proposition}
\label{p3.1} (Cf. Proposition 1.2 of \cite{F1}.) If $\mathbf{X}$ is a
sequential space and $\mathbf{Y}$ is a topological space such that there
exists a quotient mapping of $\mathbf{X}$ onto $\mathbf{Y}$, then $\mathbf{Y}
$ is sequential.
\end{proposition}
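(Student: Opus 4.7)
The strategy is to prove the contrapositive formulation in terms of sequentially open sets, since the quotient condition is phrased via open sets. So I would show: if $V\subseteq Y$ is sequentially open in $\mathbf{Y}$, then $V$ is open in $\mathbf{Y}$. Using the quotient hypothesis, $V$ is open in $\mathbf{Y}$ if and only if $f^{-1}(V)$ is open in $\mathbf{X}$; and since $\mathbf{X}$ is sequential, the latter is equivalent to $f^{-1}(V)$ being sequentially open in $\mathbf{X}$. Hence the whole task reduces to one step: verifying that the $f$-preimage of a sequentially open subset of $\mathbf{Y}$ is sequentially open in $\mathbf{X}$.

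For that step, I would fix a sequence $(x_n)_{n\in\omega}$ in $X$ converging in $\mathbf{X}$ to some point $x\in f^{-1}(V)$ and argue as follows. The quotient condition implies $f$ is continuous (every open subset of $\mathbf{Y}$ has open preimage in $\mathbf{X}$), so $f(x_n)\to f(x)$ in $\mathbf{Y}$, and $f(x)\in V$. Because $V$ is sequentially open in $\mathbf{Y}$, there exists $N\in\omega$ with $f(x_n)\in V$ for all $n\geq N$, which is the same as $x_n\in f^{-1}(V)$ for all $n\geq N$. This says exactly that the complement of $f^{-1}(V)$ in $X$ cannot contain a sequence whose limit lies in $f^{-1}(V)$; equivalently, $X\setminus f^{-1}(V)$ is sequentially closed in $\mathbf{X}$, i.e.\ $f^{-1}(V)$ is sequentially open in $\mathbf{X}$, as required.

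I do not expect a real obstacle here: the argument manipulates a single convergent sequence at a time and pushes it forward under $f$, so no form of choice is involved and the proof translates to $\mathbf{ZF}$ without modification. The only pitfall to avoid is the temptation to lift sequences in $\mathbf{Y}$ back to sequences in $\mathbf{X}$ through $f^{-1}$, which would typically require $\mathbf{CAC}$; the direction chosen above (pushing $\mathbf{X}$-sequences down to $\mathbf{Y}$-sequences) sidesteps this entirely.
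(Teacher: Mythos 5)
Your argument is correct and is exactly the standard one the paper has in mind (it gives no proof of its own, merely citing Franklin's Proposition 1.2 as ``easily provable in $\mathbf{ZF}$''): reduce to showing that $f^{-1}(V)$ is sequentially open for $V$ sequentially open in $\mathbf{Y}$, and push a single convergent sequence forward through the continuous map $f$. Your closing remark correctly identifies the only choice-theoretic pitfall (lifting sequences against $f$) and avoids it, so the proof goes through in $\mathbf{ZF}$ as claimed.
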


\begin{corollary}
\label{c3.2} (Cf. Corollary 1.4 of \cite{F1}.) If $f:\mathbf{X}\to\mathbf{Y} 
$ is a continuous surjection of a sequential space $\mathbf{X}$ to a
topological space $\mathbf{Y}$ and $f$ is an open or closed mapping, then
the space $\mathbf{Y}$ is sequential.
\end{corollary}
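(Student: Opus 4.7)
The plan is to deduce this corollary directly from Proposition \ref{p3.1} by showing that a continuous surjection which is either open or closed is automatically a quotient mapping. Since Proposition \ref{p3.1} is already available, the entire task reduces to verifying the quotient property for $f$, and no invocation of any choice principle will be required.

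First I would recall what must be checked: given $W\subseteq Y$ with $f^{-1}(W)$ open in $\mathbf{X}$, I need to produce the conclusion that $W$ is open in $\mathbf{Y}$ (the reverse implication is exactly the continuity of $f$, which is assumed). Both cases (open $f$, closed $f$) hinge on the identity $f(X\setminus f^{-1}(W))=Y\setminus W$, which holds for any surjection $f$: if $y\in Y\setminus W$, surjectivity yields $x\in X$ with $f(x)=y$, and this $x$ lies in $X\setminus f^{-1}(W)$; conversely, if $f(x)=y$ with $x\notin f^{-1}(W)$, then $y\notin W$. Dually, $f(f^{-1}(W))=W$ for surjective $f$.

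Next I would split into the two cases. If $f$ is open, apply $f$ to the open set $f^{-1}(W)$ to get that $W=f(f^{-1}(W))$ is open in $\mathbf{Y}$. If $f$ is closed, apply $f$ to the closed set $X\setminus f^{-1}(W)$ to conclude that $Y\setminus W=f(X\setminus f^{-1}(W))$ is closed in $\mathbf{Y}$, so $W$ is open. In either case, $f$ is a quotient mapping onto $\mathbf{Y}$, and Proposition \ref{p3.1} then gives sequentiality of $\mathbf{Y}$.

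There is essentially no obstacle in this argument; the only thing worth remarking on is that the whole verification is carried out in $\mathbf{ZF}$ without invoking any form of choice, since the set-theoretic identities $f(f^{-1}(W))=W$ and $f(X\setminus f^{-1}(W))=Y\setminus W$ for a surjection $f$ are elementary and the preservation of the open/closed property uses only the hypothesis on $f$. Thus the proof is a two-line reduction to Proposition \ref{p3.1}.
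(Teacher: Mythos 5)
Your argument is correct and is exactly the intended one: the paper states this corollary without proof, presenting it as an easy consequence of Proposition \ref{p3.1} via the standard fact that a continuous open or closed surjection is a quotient map, which is precisely what you verify (and your set-theoretic identities $f(f^{-1}(W))=W$ and $f(X\setminus f^{-1}(W))=Y\setminus W$ for surjective $f$ indeed require no choice). Nothing to add.
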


\begin{proposition}
\label{p3.3} (Cf. Proposition 1.9 of \cite{F1}) All closed and all open
subspaces of sequential spaces are sequential.
\end{proposition}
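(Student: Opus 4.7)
The plan is to treat the closed and open cases separately, in each case producing from a sequentially closed subset $A$ of the subspace $\mathbf{Y}$ a sequentially closed subset of the ambient space $\mathbf{X}$, and then invoking the sequentiality of $\mathbf{X}$ to conclude that the auxiliary set is closed in $\mathbf{X}$, whence $A$ is closed in $\mathbf{Y}$. Throughout, I will use only that a sequence in $Y$ converges to $y \in Y$ in $\mathbf{Y}$ iff it converges to $y$ in $\mathbf{X}$, which is immediate from the subspace topology and requires no choice.

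For the closed case, let $Y$ be closed in $\mathbf{X}$ and let $A \subseteq Y$ be sequentially closed in $\mathbf{Y}$. Given any sequence $(a_n)_{n \in \omega}$ of points of $A$ converging to a point $x$ in $\mathbf{X}$, the closedness of $Y$ forces $x \in Y$, hence the sequence also converges to $x$ in $\mathbf{Y}$, and sequential closedness of $A$ in $\mathbf{Y}$ yields $x \in A$. Thus $A \in SC(\mathbf{X}) = Cl(\mathbf{X})$, so $A$ is closed in $\mathbf{X}$, and therefore closed in $\mathbf{Y}$.

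For the open case, let $Y$ be open in $\mathbf{X}$ and let $A \subseteq Y$ be sequentially closed in $\mathbf{Y}$. I will show that $B = A \cup (X \setminus Y)$ is sequentially closed in $\mathbf{X}$. Suppose $(b_n)_{n \in \omega}$ is a sequence in $B$ converging to $x \in X$. If $x \notin Y$, then $x \in B$ and we are done. If $x \in Y$, then $Y$ is an open neighborhood of $x$, so there is an index $n_0 \in \omega$ with $b_n \in Y$ for all $n \geq n_0$; for such $n$, $b_n \in B \cap Y = A$, and the tail $(b_n)_{n \geq n_0}$ converges to $x$ in $\mathbf{Y}$, whence $x \in A \subseteq B$ by sequential closedness of $A$ in $\mathbf{Y}$. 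Sequentiality of $\mathbf{X}$ now gives that $B$ is closed in $\mathbf{X}$, so $X \setminus B = Y \setminus A$ is open in $\mathbf{X}$ and a fortiori open in $\mathbf{Y}$, i.e., $A$ is closed in $\mathbf{Y}$.

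No choice principle is needed: in both cases the argument extracts a single witness $x$ or a single threshold $n_0$ from the hypotheses, never a family of witnesses indexed by an uncountable (or even countably infinite) set. I do not anticipate a genuine obstacle here; the only point requiring a small amount of care is the open case, where one must remember to enlarge $A$ by $X \setminus Y$ before trying to apply sequentiality of $\mathbf{X}$, since $A$ itself need not be sequentially closed in $\mathbf{X}$ when limits in $\mathbf{X}$ can escape $Y$.
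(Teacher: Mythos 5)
Your proof is correct, and both cases are handled cleanly: the closed case reduces immediately to sequential closedness in the ambient space, and in the open case the enlargement $B=A\cup(X\setminus Y)$ is exactly the right device, since $A$ alone need not be sequentially closed in $\mathbf{X}$. The paper gives no proof of this proposition at all---it is stated with a citation to Franklin's Proposition 1.9 and the remark that it is ``easily provable in $\mathbf{ZF}$''---and your argument is the standard one, visibly choice-free as you observe, so it supplies precisely the verification the paper leaves to the reader.
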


\begin{proposition}
\label{p3.4} (Cf. Corollary 1.5 of \cite{F1}) If $\mathbf{X}_1$ and $\mathbf{X}_2 
$ are non-empty spaces such that their product $\mathbf{X}_1\times\mathbf{X}%
_2$ is sequential, then both $\mathbf{X}_1$ and $\mathbf{X}_2$ are
sequential.
\end{proposition}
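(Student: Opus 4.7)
The plan is to realize each factor $\mathbf{X}_i$ as a quotient of the product $\mathbf{X}_1\times\mathbf{X}_2$ via the projection map, and then invoke Corollary~\ref{c3.2} (or Proposition~\ref{p3.1}). Concretely, I would consider the projection $\pi_i:\mathbf{X}_1\times\mathbf{X}_2\to\mathbf{X}_i$ for $i\in\{1,2\}$. Because both $X_1$ and $X_2$ are non-empty, $\pi_i$ is surjective: given $x\in X_i$, one picks any fixed $y\in X_{3-i}$ (no choice principle is needed since $X_{3-i}$ is a single non-empty set) and then $\pi_i(x,y)=x$. The map $\pi_i$ is continuous by the definition of the product topology, and it is open because the image of a basic open rectangle $U_1\times U_2$ under $\pi_i$ is $U_i$, which is open in $\mathbf{X}_i$; this verification uses only the definition of the product topology and requires no form of choice.

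With these facts in hand, Corollary~\ref{c3.2} applies directly: a continuous open surjection from a sequential space onto a topological space forces the target to be sequential. Since $\mathbf{X}_1\times\mathbf{X}_2$ is sequential by hypothesis and $\pi_i$ is a continuous open surjection onto $\mathbf{X}_i$, we conclude that $\mathbf{X}_i$ is sequential for $i=1,2$.

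There is no serious obstacle in $\mathbf{ZF}$ here, which is the point worth emphasizing: the only choice-like step one might worry about is selecting the auxiliary point $y\in X_{3-i}$ to witness surjectivity, but this is a single existential statement about one non-empty set, so it is provable in $\mathbf{ZF}$ without any choice principle. Hence the classical argument transfers to $\mathbf{ZF}$ verbatim, and the proof is complete modulo the already-stated Corollary~\ref{c3.2}.
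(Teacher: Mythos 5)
Your proof is correct and matches the paper's intended argument: the paper gives no explicit proof of Proposition~\ref{p3.4} (listing it among results of \cite{F1} that are ``easily provable in $\mathbf{ZF}$''), but in the proof of Proposition~\ref{r3.5} it uses exactly the same idea — the projections are quotient maps, so sequentiality passes to the factors via Proposition~\ref{p3.1} — and your observation that surjectivity of $\pi_i$ needs only a single witness from the one non-empty set $X_{3-i}$ is precisely the point that makes this choice-free.
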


The following proposition shows that Corollary 1.5 of \cite{F1} is
unprovable in $\mathbf{ZF}$:

\begin{proposition}
\label{r3.5} $\mathbf{AC}$ is equivalent with the following statement:

(${\star}$) For every non-empty set $J$ and every collection $\{\mathbf{X}%
_j: j\in J\}$ of non-empty spaces, it holds true that if $\prod_{j\in J}%
\mathbf{X}_j$ is sequential, then $\mathbf{X}_j$ is sequential for each $%
j\in J$.
\end{proposition}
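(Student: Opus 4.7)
My plan is to prove the equivalence by the two implications. The forward direction $\mathbf{AC}\Rightarrow(\star)$ is short: assuming $\mathbf{AC}$, the product $\prod_{j\in J}\mathbf{X}_j$ is non-empty, and for each $i\in J$ one can modify a chosen point of the product on the $i$-th coordinate to witness that the projection $\pi_i:\prod_{j\in J}\mathbf{X}_j\to\mathbf{X}_i$ is surjective. Since $\pi_i$ is also continuous and open (both verifiable in $\mathbf{ZF}$ from the definition of the product topology), Corollary~\ref{c3.2} applied to the sequential product yields that $\mathbf{X}_i$ is sequential.

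The converse $(\star)\Rightarrow\mathbf{AC}$ is the content. I will exhibit in $\mathbf{ZF}$ a fixed non-sequential space $\mathbf{S}$ and then use $(\star)$ contrapositively. Take $\mathbf{S}=\langle\mathbb{R},\tau_{cc}\rangle$ with the cocountable topology. A sequence $(x_n)\to x$ in $\mathbf{S}$ is eventually constant, because $U=\mathbb{R}\setminus\{x_n:x_n\neq x\}$ has a countable complement, hence is an open neighbourhood of $x$, and so $x_n\in U$ (equivalently $x_n=x$) eventually. Thus every subset of $\mathbb{R}$ is sequentially closed in $\mathbf{S}$; on the other hand $(0,\infty)$ is not closed in $\mathbf{S}$, because it is uncountable by Cantor's theorem (which holds in $\mathbf{ZF}$). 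Hence $\mathbf{S}$ is not sequential.

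Now let $\mathcal{A}=\{A_j:j\in J\}$ be an arbitrary family of non-empty sets with $J\neq\emptyset$, and suppose for contradiction that $\mathcal{A}$ has no choice function, so $\prod_{j\in J}A_j=\emptyset$. Equip each $A_j$ with the discrete topology to form $\mathbf{X}_j$, adjoin an extra index $\ast\notin J$, set $\mathbf{X}_\ast=\mathbf{S}$, and put $J'=J\cup\{\ast\}$. Every $\mathbf{X}_j$ with $j\in J'$ is non-empty, yet $\prod_{j\in J'}\mathbf{X}_j=\emptyset$, because restricting any element to $J$ would produce a choice function for $\mathcal{A}$. The empty space is sequential (both $Cl$ and $SC$ equal $\{\emptyset\}$), so $(\star)$ applied to the family $\{\mathbf{X}_j:j\in J'\}$ forces every $\mathbf{X}_j$ to be sequential, in particular $\mathbf{X}_\ast=\mathbf{S}$, contradicting the previous paragraph. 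Hence $\mathcal{A}$ has a choice function and $\mathbf{AC}$ holds.

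The main delicate point is producing a non-sequential space in pure $\mathbf{ZF}$ without a hidden appeal to choice; the cocountable topology on $\mathbb{R}$ is convenient because its verification uses only that the range of an $\omega$-sequence is countable and that $\mathbb{R}$ has an uncountable subset, both of which are $\mathbf{ZF}$-theorems. A second subtle point worth pausing over is that $(\star)$, as literally stated, can be triggered by an empty product of non-empty factors; this is precisely what makes the trick work, since in the absence of $\mathbf{AC}$ such a pathological product is exactly what detects the failure.
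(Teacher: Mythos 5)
Your proof is correct and follows essentially the same route as the paper's: the forward direction via the projections being quotient (open continuous surjections) maps, and the converse by adjoining a non-sequential factor to a family of discrete spaces whose product is empty, hence vacuously sequential. The only difference is that you explicitly exhibit a $\mathbf{ZF}$-provable non-sequential space (the cocountable topology on $\mathbb{R}$), where the paper simply takes ``any non-sequential space'' as given; this is a small but welcome point of extra care, not a different argument.
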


\begin{proof} If $\mathbf{AC}$ holds and $\{%
\mathbf{X}_j: j\in J\}$ is a collection of non-empty spaces such that $%
\mathbf{X}=\prod_{j\in J}\mathbf{X}_j$ is sequential, then, for each $j_0\in J$, the projection $\pi_{j_0}:\mathbf{X}\to\mathbf{X}_{j_0}$ is a quotient mapping, so $\mathbf{X}_{j_0}$ is sequential by Proposition \ref{p3.1}. On the other hand, assuming that $\mathcal{M}$ is a model of $\mathbf{ZF}$ in which $\mathbf{AC}$ is false, we fix in $\mathcal{M}$  a non-empty collection $\mathcal{A}$ of non-empty sets which does not have a choice function in $\mathcal{M}$. For $A\in\mathcal{A}$, let $\mathbf{A}$ be the discrete in $\mathcal{M}$ space with its underlying set $A$. Let $\mathbf{Y}$ be any non-sequential topological space in $\mathcal{M}$ and let $\mathcal{X}=\{\mathbf{A}: A\in\mathcal{A}\}\cup\{\mathbf{Y}\}$. Then the Tychonoff product $\prod_{\mathbf{X}\in\mathcal{X}}\mathbf{X}$ in $\mathcal{M}$ is a sequential space, while $\mathbf{Y}$ is not sequential. 
\end{proof}

The following theorem is a modification of Proposition 1.12 of \cite{F1}
(see also Exercise 2.4.G(b) in \cite{En}):

\begin{theorem}
\label{t3.6} (Cf. Proposition 1.12 of \cite{F1}. For a non-empty topological
space $\mathbf{X}=\langle X, \tau\rangle$, let $\mathcal{S}$ be the set of
all elements $s\in X^{\omega}$ such that the sequence $(s(n))_{n\in\mathbb{N}%
}$ converges in $\mathbf{X}$ to $s(0)$. For each $s\in\mathcal{S}$, let $%
Z_s=\{s\}\times(\{0\}\cup\{\frac{1}{n}: n\in\mathbb{N}\})$ and let $f_s:
Z_s\to X$ be defined as follows; $f_s(\langle s, 0\rangle)=s(0) $ and $%
f_s(\langle s, \frac{1}{n}\rangle)=s(n)$ for each $n\in\mathbb{N}$. If $s\in%
\mathcal{S}$, let $\mathbf{Z}_s$ denote the topological space $\langle Z_s,
\tau_s\rangle$ where $\tau_s=\{ \{s\}\times (U\cap(\{0\}\cup\{\frac{1}{n}:
n\in\mathbb{N}\})): U\in\tau_{nat}\}$. Let $\mathbf{Z}(\mathbf{X})$ be the
topological sum $\oplus_{s\in\mathcal{S}}\mathbf{Z}_s$ of the spaces $%
\mathbf{Z}_s$. Let $f:\mathbf{Z}(\mathbf{X})\to\mathbf{X}$ be defined as
follows: $f(z)=f_s(z)$ whenever $s\in\mathcal{S}$ and $z\in Z_s$. Then $f$
is a quotient mapping if and only if the space $\mathbf{X}$ is sequential.
\end{theorem}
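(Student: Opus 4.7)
My plan is to verify both implications by exploiting the canonical structure of the summands $\mathbf{Z}_s$, each of which is a copy of the convergent-sequence space $\{0\}\cup\{\frac{1}{n}:n\in\mathbb{N}\}$, viewed as a subspace of $\mathbb{R}$. In particular, each $\mathbf{Z}_s$ is metrizable and first-countable, hence sequential in $\mathbf{ZF}$. Since a convergent sequence in a topological sum is eventually contained in a single summand (the summands are clopen), a subset of $\mathbf{Z}(\mathbf{X})$ is sequentially closed if and only if its trace on every $Z_s$ is sequentially closed in $\mathbf{Z}_s$, and similarly for closedness. Hence $\mathbf{Z}(\mathbf{X})$ is sequential. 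I would also record at the outset that $f$ is a continuous surjection: continuity on each $\mathbf{Z}_s$ reduces to the statement that $s(n)\to s(0)$ in $\mathbf{X}$, which holds by definition of $\mathcal{S}$; surjectivity is witnessed by the constant sequence $s$ with $s(n)=x$ for every $n\in\omega$, which satisfies $s\in\mathcal{S}$ and $f(\langle s,0\rangle)=x$ for each $x\in X$.

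For the $(\Rightarrow)$ direction, if $f$ is a quotient mapping, then since $\mathbf{Z}(\mathbf{X})$ is sequential, Proposition~\ref{p3.1} applied to $f$ gives that $\mathbf{X}$ is sequential. This is essentially a one-line deduction.

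The substantive direction is $(\Leftarrow)$. Assume $\mathbf{X}$ is sequential. To show $f$ is quotient, let $W\subseteq X$ be such that $f^{-1}(W)$ is open in $\mathbf{Z}(\mathbf{X})$; I must prove that $W$ is open in $\mathbf{X}$, equivalently that $X\setminus W$ is closed. By sequentiality of $\mathbf{X}$, it suffices to check that $X\setminus W$ is sequentially closed. So let $(x_n)_{n\in\mathbb{N}}$ be a sequence of points of $X\setminus W$ converging in $\mathbf{X}$ to some $x\in X$; the aim is to show $x\in X\setminus W$. Here is the key canonical construction that avoids any choice: define $s\in X^{\omega}$ by $s(0)=x$ and $s(n)=x_n$ for $n\in\mathbb{N}$. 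Then $s\in\mathcal{S}$, so $Z_s$ is a summand of $\mathbf{Z}(\mathbf{X})$ and $f^{-1}(W)\cap Z_s$ is open in $\mathbf{Z}_s$. If $x\in W$ were to hold, then $\langle s,0\rangle\in f^{-1}(W)\cap Z_s$, and from the topology of $\mathbf{Z}_s$ (a neighbourhood of $\langle s,0\rangle$ contains $\langle s,\frac{1}{n}\rangle$ for all sufficiently large $n$) there would exist $N\in\mathbb{N}$ with $\langle s,\frac{1}{n}\rangle\in f^{-1}(W)$ for every $n\geq N$, i.e.\ $x_n=s(n)\in W$ for all $n\geq N$, contradicting $x_n\in X\setminus W$. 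Hence $x\in X\setminus W$, as desired.

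The only step that requires genuine care is verifying that the topological sum $\mathbf{Z}(\mathbf{X})$ is sequential in $\mathbf{ZF}$ without inadvertently invoking countable choice when passing between open/closed and sequentially open/sequentially closed sets; this is handled cleanly by the observation above that convergent sequences stay in a single clopen summand, making each summand's sequentiality descend to the sum purely by definition chasing. Everything else is a canonical construction of a single convergent sequence and a direct unpacking of the topology on $Z_s$, so no fragment of the axiom of choice intrudes.
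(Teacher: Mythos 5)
Your argument is correct, and it is essentially the standard Franklin-style proof that the paper omits (Theorem \ref{t3.6} is stated without proof, with only a pointer to Proposition 1.12 of \cite{F1}): the forward direction via Proposition \ref{p3.1} once $\mathbf{Z}(\mathbf{X})$ is known to be sequential, and the backward direction via the canonical, choice-free construction of a single $s\in\mathcal{S}$ from a given convergent sequence, followed by unpacking the neighbourhoods of $\langle s,0\rangle$. One justification should be repaired, however: the implication ``metrizable and first-countable $\Rightarrow$ sequential'' is \emph{not} a theorem of $\mathbf{ZF}$ --- the paper itself recalls that $\mathbb{R}$, which is metrizable and second-countable, fails to be sequential in Cohen's model $\mathcal{M}1$ --- so you cannot invoke it to conclude that each $\mathbf{Z}_s$ is sequential. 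What saves you is that $\{0\}\cup\{\frac{1}{n}:n\in\mathbb{N}\}$ is a \emph{countable, well-orderable} space with exactly one non-isolated point: if $B\subseteq Z_s$ is sequentially closed and $\langle s,0\rangle\in\text{cl}_{\mathbf{Z}_s}(B)\setminus B$, then the set $\{n\in\mathbb{N}:\langle s,\frac{1}{n}\rangle\in B\}$ is infinite and can be enumerated canonically in increasing order, yielding without any choice a sequence of points of $B$ converging to $\langle s,0\rangle$, a contradiction. With that substitution, the rest of your proof --- the clopen-summand reduction showing the sum is sequential, the continuity and surjectivity of $f$, and both implications --- goes through in $\mathbf{ZF}$ exactly as you wrote it.
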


\begin{definition}
\label{d3.7} (Cf. Exercise 3.10.J of \cite{En}.) A topological space $%
\mathbf{X}$ is called \textit{locally sequentially compact} (respectively, 
\textit{locally countably compact}) if $\mathbf{X}$ is a $T_0$-space and, for
every open in $\mathbf{X}$ set $V$ and each $x\in V$, there exists an open
neighbourhood $U$ of $x$ such that $\text{cl}_{\mathbf{X}}(U)$ is
sequentially compact (respectively, countably compact) in $\mathbf{X}$ and $%
\text{cl}_{\mathbf{X}}(U)\subseteq V$.
\end{definition}

Let us notice that every regular $T_0$-space is Hausdorff. This is why, in our article, all locally sequentially compact  and all locally countably compact spaces are Hausdorff.  We do not consider modifications of Definition \ref{d3.7} to more general cases. We recommend \cite{kyr} and \cite{mm} to learn more about sequential
compactness and relevant concepts in the absence of $\mathbf{CAC}$.

Now, we are in a position to show several new results on sequential spaces
and their finite products in $\mathbf{ZF}$. Every ordinal number of von
Neumann, as a topological space, is considered with the topology induced by
its standard well-ordering.

\begin{lemma}
\label{l3.8} Let $\mathbf{X}_1, \mathbf{X}_2$ and  $\mathbf{Y}$ be sequential
spaces such that $\mathbf{X}_{1}$ is a locally sequentially compact space
which satisfies $\mathbf{PCAC}(\mathbf{X}_1, scl)$. Then the following
conditions are satisfied:

\begin{enumerate}
\item[(i)] for every sequentially open in $\mathbf{X}_1\times\mathbf{X}_2$
set $O$ and for each $t\in X_2$, the set $O_t=\{x\in X_1: \langle x,
t\rangle\in O\}$ is open in $\mathbf{X}_1$;

\item[(ii)] for every sequentially compact in $\mathbf{X}_1$ set $C$, for
every sequentially open in $\mathbf{X}_1\times\mathbf{X}_2$ set $O$, the set 
$\{t\in X_2: C\times\{t\}\subseteq O\}$ is open in $\mathbf{X}_2$.
\end{enumerate}
\end{lemma}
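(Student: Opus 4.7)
The plan is to prove both statements by reducing \textquotedblleft open\textquotedblright\ to \textquotedblleft sequentially open\textquotedblright\ in the relevant factor, since each factor is assumed sequential, and then establish the sequentially open property by a sequential argument in the product. The key ingredients will be: the constant sequence trick, the fact that a sequentially compact set is sequentially closed, and crucially, $\mathbf{PCAC}(\mathbf{X}_1, scl)$ together with sequential compactness of $C$ for extracting a contradicting subsequence in part (ii).

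For (i), since $\mathbf{X}_1$ is sequential, it is enough to show that $O_t$ is sequentially open. So I would take a sequence $(x_n)$ converging in $\mathbf{X}_1$ to some $x\in O_t$. Then the sequence $(\langle x_n,t\rangle)$ converges in $\mathbf{X}_1\times\mathbf{X}_2$ to $\langle x,t\rangle\in O$, and sequential openness of $O$ forces $\langle x_n,t\rangle\in O$ for almost all $n$, i.e.\ $x_n\in O_t$ for almost all $n$. This is routine and uses only that $\mathbf{X}_1$ is sequential.

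For (ii), write $U=\{t\in X_2 : C\times\{t\}\subseteq O\}$. Since $\mathbf{X}_2$ is sequential, it suffices to show $U$ is sequentially open. Suppose, toward a contradiction, that some sequence $(t_n)_{n\in\omega}$ in $X_2$ converges to $t\in U$ but the set $N=\{n\in\omega : t_n\notin U\}$ is infinite. For every $n\in N$, the slice $\{x\in X_1 : \langle x,t_n\rangle\notin O\}$ is sequentially closed in $\mathbf{X}_1$ (the constant sequence at $t_n$ plus sequential closedness of $X_1\times X_2\setminus O$), and $C$ is sequentially closed in $\mathbf{X}_1$ because it is sequentially compact. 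Hence $A_n=C\cap\{x\in X_1 : \langle x,t_n\rangle\notin O\}$ is a non-empty sequentially closed subset of $\mathbf{X}_1$ for each $n\in N$.

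Now invoke $\mathbf{PCAC}(\mathbf{X}_1,scl)$ on the countable family $\{A_n:n\in N\}$ to obtain an infinite $N'\subseteq N$ and a choice function picking $x_n\in A_n$ for $n\in N'$. Enumerating $N'$ as $m_0<m_1<\dots$, the sequence $(x_{m_k})_{k\in\omega}$ lies in the sequentially compact set $C$, so it has a subsequence $(x_{m_{k_j}})$ converging in $\mathbf{X}_1$ to some $x\in C$. The corresponding subsequence $(t_{m_{k_j}})$ still converges to $t$, hence $\langle x_{m_{k_j}},t_{m_{k_j}}\rangle\to\langle x,t\rangle\in C\times\{t\}\subseteq O$. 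Sequential openness of $O$ then gives $\langle x_{m_{k_j}},t_{m_{k_j}}\rangle\in O$ for almost all $j$, contradicting $x_{m_{k_j}}\in A_{m_{k_j}}$. Thus $U$ is sequentially open and the proof is complete. The main obstacle is choice-theoretic: without the hypothesis $\mathbf{PCAC}(\mathbf{X}_1,scl)$ one cannot simultaneously pick witnesses $x_n\in A_n$ in $\mathbf{ZF}$, so this is the place where the choice assumption does the essential work; everything else is a careful but standard product-sequence argument.
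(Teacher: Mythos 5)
Your argument is correct and follows essentially the same route as the paper's: in (i) the slice/constant-sequence argument, and in (ii) taking a sequence converging from outside $U$ to a point of $U$, forming the non-empty sequentially closed sets $A_n$, invoking $\mathbf{PCAC}(\mathbf{X}_1,scl)$, and using sequential compactness of $C$ to extract a convergent subsequence contradicting the sequential openness of $O$. The only point worth making explicit is that your step ``$C$ is sequentially closed because it is sequentially compact'' relies on uniqueness of sequential limits, which is available here because the paper's definition of local sequential compactness makes $\mathbf{X}_1$ Hausdorff.
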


\begin{proof}  Assume that $O$ is a sequentially open subset of $\mathbf{X}_1\times\mathbf{X}_2$.

 (i) Let $t\in X_2$ and let
$(x_n)_{n\in\omega}$ be a sequence of points of $X_1\setminus O_t$ which converges in $\mathbf{X}_1$ to a point $x$. Then $(\langle x_n, t\rangle)_{n\in\omega}$ is a sequence of points of $(X_1\times\{t\})\setminus O$ which converges in $\mathbf{X}_1\times\mathbf{X}_2$ to the point $\langle x, t\rangle$. Since the set  $(X_1\times\{t\})\setminus O$ is sequentially closed in the sequential space $\mathbf{X}_1\times\{t\}$, we infer that $\langle x, t\rangle\notin O$. Hence $x\notin O_t$. This proves that $X_1\setminus O_t$ is sequentially closed in $\mathbf{X}_1$. In consequence, $O_t$ is sequentially open in $\mathbf{X}_1$. Since $\mathbf{X}_1$ is sequential, $O_t$ is open in $\mathbf{X}_1$.

(ii) Now, assume that $C$ is sequentially compact in $\mathbf{X}_1$. Since the space $\mathbf{X}_1$ is sequential and Hausdorff, the set $C$ is closed in $\mathbf{X}_1$. Let $V=\{t\in X_2: C\times\{t\}\subseteq O\}$. Suppose that $V$ is not open in $\mathbf{X}_2$. Then, since $\mathbf{X}_2$ is sequential, the set $X_2\setminus V$ is not sequentially closed in $\mathbf{X}_2$. Thus, there exists a sequence $(y_n)_{n\in\omega}$ of points of $X_2\setminus V$ which converges in $\mathbf{X}_2$ to a point $y\in V$. For each $n\in\omega$, the set $O_n=\{x\in X_1: \langle x, y_n\rangle\in O\}$ is open in $\mathbf{X}_1$. Hence, the sets $A_n=C\setminus O_n$ are closed in $\mathbf{X}_1$. For each $n\in\omega$, the set $A_n$ is non-empty because $y_n\in X_2\setminus V$. Since $\mathbf{PCAC}(\mathbf{X}_1, scl)$ holds, there exist an infinite set $N\subseteq\omega$ and a point $z\in\prod_{n\in N}A_n$. By the sequential compactness of $C$, there exists a subsequence $(z_{n_k})_{k\in\omega}$ of $(z(n))_{n\in N}$ such that $(z_{n_k})_{k\in\omega}$ converges in $\mathbf{X}_1$ to some point $z\in C$. Then $(\langle z_{n_k}, y_{n_k}\rangle)_{k\in\omega}$ converges in $\mathbf{X}_1\times\mathbf{X}_2$ to $\langle z, y\rangle$. Since $y\in V$, while $z\in C$, we deduce that $\langle z, y\rangle\in O$. This, together with the assumption that $O$ is sequentially open,  implies that there exists $k_0\in\omega$ such that $\langle z_{n_k}, y_{n_k}\rangle\in O$ for each $k\in\omega\setminus k_0$. This is impossible because $z_{n_k}\notin O_{n_k}$ for each $k\in\omega$. The contradiction obtained proves (ii).
\end{proof}

A known result of $\mathbf{ZFC}$ is that the product $\mathbf{X}_1\times%
\mathbf{X}_2$ of sequential spaces $\mathbf{X}_1$ and $\mathbf{X}_2$ is
sequential if $\mathbf{X}_1$ is a locally sequentially compact space (see,
for instance, Exercise 3.10.J in \cite{En} and Theorem 4.2 in \cite{mich}).
Another known result of $\mathbf{ZFC}$ is that if $\mathbf{X}_1$ and $\mathbf{X}_2
$ are sequential spaces such that $\mathbf{X}_1$ is locally sequentially
compact, while $g:\mathbf{X}_2\to\mathbf{Y}$ is a quotient mapping, then $%
\text{id}_{X_1}\times g$ is quotient (see Theorem 4.2 of \cite{mich} and
Exercise 3.10.J(b) of \cite{En}). Our modification for $\mathbf{ZF}$ of the
above-mentioned results is the following theorem:

\begin{theorem}
\label{t3.9} Let $\mathbf{X}_1, \mathbf{X}_2$ and $\mathbf{Y}$ be sequential
spaces such that $\mathbf{X}_{1}$ is a locally sequentially compact space
for which $\mathbf{PCAC}(\mathbf{X}_1, scl)$ holds. Suppose that $g:\mathbf{X%
}_2\to\mathbf{Y}$ is a quotient mapping. Then:

\begin{enumerate}
\item[(i)] $\mathbf{X}_1\times\mathbf{X}_2$ is sequential;

\item[(ii)] the mapping $f=\text{id}_{X_1}\times g:\mathbf{X}_1\times\mathbf{%
X}_2\to\mathbf{X}_1\times\mathbf{Y}$ is quotient.
\end{enumerate}
\end{theorem}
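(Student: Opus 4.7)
The plan for (i) is to promote every sequentially open set to an open set by building an open box neighborhood at each of its points. Fix a sequentially open $O\subseteq X_1\times X_2$ and a point $\langle x_0,y_0\rangle\in O$. By Lemma \ref{l3.8}(i) the section $O_{y_0}$ is open in $\mathbf{X}_1$, so local sequential compactness of $\mathbf{X}_1$ yields an open $U\ni x_0$ with $C:=\mathrm{cl}_{\mathbf{X}_1}(U)$ sequentially compact and $C\subseteq O_{y_0}$. Hence $C\times\{y_0\}\subseteq O$, and Lemma \ref{l3.8}(ii) says $V:=\{t\in X_2 : C\times\{t\}\subseteq O\}$ is open in $\mathbf{X}_2$; since $y_0\in V$, the box $U\times V\subseteq O$ witnesses that $O$ is open at $\langle x_0,y_0\rangle$. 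As $\mathbf{X}_1\times\mathbf{X}_2$ is sequential iff every sequentially open set is open, this settles (i).

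For (ii), I first note that $\mathbf{Y}$ is sequential by hypothesis, so part (i) applied to the pair $(\mathbf{X}_1,\mathbf{Y})$ gives that $\mathbf{X}_1\times\mathbf{Y}$ is sequential as well; this is what allows Lemma \ref{l3.8} to be used a second time on the target product. The map $f=\mathrm{id}_{X_1}\times g$ is continuous and surjective. To show it is quotient, let $W\subseteq X_1\times Y$ with $f^{-1}(W)$ open in $\mathbf{X}_1\times\mathbf{X}_2$; I want $W$ open in $\mathbf{X}_1\times\mathbf{Y}$. Fix $\langle x_0,y_0\rangle\in W$ and choose $t_0\in g^{-1}(y_0)$ (available by surjectivity of $g$). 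A direct unpacking shows $W_{y_0}=f^{-1}(W)_{t_0}$, which is open in $\mathbf{X}_1$. As before, pick an open $U\ni x_0$ with $C:=\mathrm{cl}_{\mathbf{X}_1}(U)$ sequentially compact and $C\subseteq W_{y_0}$, so $C\times\{y_0\}\subseteq W$.

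The finishing move is to set $V:=\{y\in Y : C\times\{y\}\subseteq W\}$ and observe that
\[
g^{-1}(V)=\{t\in X_2 : C\times\{t\}\subseteq f^{-1}(W)\},
\]
which is open in $\mathbf{X}_2$ by Lemma \ref{l3.8}(ii) applied to the open (hence sequentially open) set $f^{-1}(W)$ and the sequentially compact set $C$. Since $g$ is quotient, $V$ is open in $\mathbf{Y}$; clearly $y_0\in V$, so $U\times V\subseteq W$ is an open neighborhood of $\langle x_0,y_0\rangle$ in $\mathbf{X}_1\times\mathbf{Y}$, proving $W$ is open.

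The main obstacle I expect is the bookkeeping in (ii): one must transfer sections and Lemma \ref{l3.8}(ii) from $\mathbf{X}_1\times\mathbf{X}_2$ to $\mathbf{X}_1\times\mathbf{Y}$ via $f$ without ever choosing preimages along $g$ over the whole of $C$ (only a single $t_0\in g^{-1}(y_0)$ is needed). The reason no extra choice principle beyond $\mathbf{PCAC}(\mathbf{X}_1,scl)$ is required is precisely that the sequential-compactness argument in Lemma \ref{l3.8}(ii) is carried out in the $\mathbf{X}_1$-coordinate, which sits unchanged on both sides of $f$.
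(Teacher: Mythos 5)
Your proposal is correct and follows essentially the same route as the paper's proof: part (i) uses Lemma \ref{l3.8}(i) to open up the sections $O_t$, local sequential compactness to extract $C$, and Lemma \ref{l3.8}(ii) to get the open tube $C\times V\subseteq O$; part (ii) is the paper's argument with $H=\{y\in Y: D\times g^{-1}(y)\subseteq f^{-1}(W)\}$ rewritten as your $V$, and the key step in both is that $g^{-1}(V)=\{t\in X_2: C\times\{t\}\subseteq f^{-1}(W)\}$ is open by Lemma \ref{l3.8}(ii) applied in $\mathbf{X}_1\times\mathbf{X}_2$. The only inessential difference is your preliminary remark that $\mathbf{X}_1\times\mathbf{Y}$ is sequential, which is never actually used since Lemma \ref{l3.8}(ii) is invoked only on $\mathbf{X}_1\times\mathbf{X}_2$.
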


\begin{proof}
(i) Fix a sequentially open subset $O$ of $\mathbf{X}_{1}\times \mathbf{X}_{2}$. We show that $O$ is open.  In view of Lemma \ref{l3.8}(i), for every $t\in X_{2}$, the set
$O_{t}=\{x\in X_{1}:\langle x, t\rangle\in O\}$
is open in $\mathbf{X}_1$.  To see that $O$ is open, fix $\langle s, t\rangle\in O$. Clearly, $s\in O_{t}$%
. Since $\mathbf{X}_1$ is a locally sequentially compact space, there exists a sequentially compact neighbourhood $C$ of $s$ with $C\subseteq O_{t}$.  Clearly, $C\times \{t\}\subseteq O$. Let 
$V=\{y\in X_{2}:C\times \{y\}\subseteq O\}$. It follows from Lemma \ref{l3.8}(ii) that $V$ is open in $\mathbf{X}_2$. Since $\langle s, t\rangle\in C\times V\subseteq O$, we infer that $O$ is open in $\mathbf{X}_1\times\mathbf{X}_2$. This implies that $\mathbf{X}_1\times\mathbf{X}_2$ is sequential.

(ii)  Let $W$ be a subset of $X_{1}\times Y$ such that $f^{-1}(W)$ is open in $%
\mathbf{X}_{1}\times \mathbf{X}_{2}$. To prove that $f$ is quotient, it suffices to check that $W$ is open
in $\mathbf{X}_{1}\times \mathbf{Y}$. Let $\langle x_{0},z_{0}\rangle \in W$%
. Take $y_{0}\in g^{-1}(z_{0})$ and a neighbourhood $U$ of $x_{0}$ such that 
$D=\text{cl}_{\mathbf{X}_1}(U)$ is sequentially compact and $D\times
\{y_{0}\}\subseteq f^{-1}(W)$. Then $D\times
g^{-1}(z_{0})\subseteq f^{-1}(W)$. Let $H=\{y\in Y: D\times
g^{-1}(y)\subseteq f^{-1}(W)\}$. We have $\langle x_{0},z_{0}\rangle \in
U\times H\subseteq W$. To complete the proof, it suffices to show that $H$
is open in $\mathbf{Y}$. Since $g$ is quotient, it suffices to check that $%
g^{-1}(H)$ is open in $\mathbf{X}_{2}$. We notice that $g^{-1}(H)=\{z\in
X_{2}:D\times \{z\}\subseteq f^{-1}(W)\}$. It follows from Lemma \ref{l3.8}(ii) that  $g^{-1}(H)$
is open in $\mathbf{X}_2$. Hence $H$ is open in $\mathbf{Y}$.
\end{proof}

\begin{corollary}
\label{c3.10} For every sequential space $\mathbf{X}$, the product $\mathbf{X%
}\times (\omega+1)$ is sequential.
\end{corollary}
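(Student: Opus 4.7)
The plan is to apply Theorem~\ref{t3.9}(i) with $\mathbf{X}_1 := \omega+1$ (equipped with its order topology, as per the convention stated just before Lemma~\ref{l3.8}) and $\mathbf{X}_2 := \mathbf{X}$, obtaining that $(\omega+1) \times \mathbf{X}$ is sequential; composing with the canonical coordinate-swapping homeomorphism then yields that $\mathbf{X} \times (\omega+1)$ is sequential. What remains is to verify, in $\mathbf{ZF}$, three properties of $\omega+1$: (a) it is sequential, (b) $\mathbf{PCAC}(\omega+1, scl)$ holds, and (c) it is locally sequentially compact.

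Items (a) and (b) are essentially immediate. Each finite ordinal $n$ is isolated in $\omega+1$ (the singleton $\{n\}$ is a basic open interval), while $\omega$ has the countable neighbourhood base $\{(N, \omega] : N \in \omega\}$, so $\omega+1$ is first-countable and hence sequential. For (b), since $\omega+1$ is canonically well-ordered, the map sending each non-empty subset to its minimum is a choice function on $\mathcal{P}(\omega+1) \setminus \{\emptyset\}$; in particular $\mathbf{PCAC}(\omega+1, scl)$ holds.

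Condition (c) reduces to showing that $\omega+1$ itself is sequentially compact in $\mathbf{ZF}$: a neighbourhood of a finite point $n$ can be shrunk to $\{n\}$, and a neighbourhood of $\omega$ contains a set of the form $(N, \omega]$, which is closed in $\omega+1$ and (being homeomorphic to $\omega+1$) will be sequentially compact once the whole space is. To verify sequential compactness choice-free, take a sequence $(a_n)_{n\in\omega}$ and consider $S = \{k \in \omega+1 : \{n \in \omega : a_n = k\} \text{ is infinite}\}$. If $S \neq \emptyset$, let $k^{\ast} = \min S$ and enumerate $\{n : a_n = k^{\ast}\}$ in its natural order; the resulting subsequence is constantly $k^{\ast}$. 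If $S = \emptyset$, then every value is attained only finitely often, so $a_n \in \omega$ for all $n$ past some $N_0$; for each $k \in \omega$, let $m_k$ be the largest $n$ with $a_n = k$ (or $N_0$ if none), and recursively set $n_0 = N_0$ and $n_{i+1} = 1 + \max\{m_k : k \leq a_{n_i}\}$. Then $(a_{n_i})$ is strictly increasing in $\omega$ and converges to $\omega$ in $\omega+1$.

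The only real obstacle is keeping this last argument genuinely choice-free; that is handled by using the canonical well-ordering of $\omega+1$ (and of $\omega$) to make every required selection explicit, so that no countable-choice principle is invoked.
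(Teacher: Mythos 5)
Your proposal is correct and follows essentially the same route as the paper, whose proof of Corollary~\ref{c3.10} consists precisely of the observation that $\omega+1$ is a sequential, locally sequentially compact space satisfying $\mathbf{PCAC}(\omega+1, scl)$, so that Theorem~\ref{t3.9} applies. One small caution: the inference ``first-countable, hence sequential'' is not a $\mathbf{ZF}$-theorem in general (this is exactly the point of Theorem~\ref{t3.15}(b)); for $\omega+1$ it is justified only because the canonical well-ordering lets you pick a point of $U_n\cap A$ for each $n$ without any choice principle, the same device you already use explicitly in your sequential-compactness argument.
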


\begin{proof} In the light of Theorem \ref{t3.9}, it suffices to notice that $\omega+1$ is a sequential, locally sequentially compact space for which $\mathbf{PCAC}(\omega+1, scl)$ holds true. In fact, $\omega+1$ is Loeb.
\end{proof}

Let us notice that the following Whitehead Theorem (see Lemma 4 of \cite{Wh}%
, Theorem 3.3.17 in \cite{En} and Theorem 2.1 of \cite{mich}) is provable in 
$\mathbf{ZF}$:

\begin{theorem}
\label{t3.11} (Whitehead) Let $\mathbf{X}, \mathbf{Y}$ and $\mathbf{Z}$ be
topological spaces such that $\mathbf{X}$ is a locally compact Hausdorff
space. Suppose that $g:\mathbf{Y}\to\mathbf{Z}$ is a quotient mapping. Then
the mapping $f=\text{id}_{X}\times g: \mathbf{X}\times\mathbf{Y}\to\mathbf{X}%
\times\mathbf{Z}$ is quotient.
\end{theorem}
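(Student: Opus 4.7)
The plan is to run the standard Whitehead argument and verify each step is $\mathbf{ZF}$-safe. The map $f=\text{id}_{X}\times g$ is plainly a continuous surjection (since $g$ is), so to show $f$ is quotient it suffices to prove the following: whenever $W\subseteq X\times Z$ has $f^{-1}(W)$ open in $\mathbf{X}\times\mathbf{Y}$, then $W$ is open in $\mathbf{X}\times\mathbf{Z}$. I will show that an arbitrary $\langle x_{0},z_{0}\rangle\in W$ has an open box neighbourhood inside $W$.

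First I fix one $y_{0}\in g^{-1}(z_{0})$ (a single non-empty set, so no choice principle needed). The slice $W_{z_{0}}=\{x\in X:\langle x,z_{0}\rangle\in W\}$ equals the preimage of $f^{-1}(W)$ under the continuous injection $x\mapsto\langle x,y_{0}\rangle$, hence is open in $\mathbf{X}$. Since $\mathbf{X}$ is locally compact Hausdorff, I pick an open $U\ni x_{0}$ with $K:=\text{cl}_{\mathbf{X}}(U)$ compact and $K\subseteq W_{z_{0}}$. Using $\langle x,y\rangle\in f^{-1}(W)\Leftrightarrow\langle x,g(y)\rangle\in W$, this gives $K\times g^{-1}(z_{0})\subseteq f^{-1}(W)$.

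Next I define the tube set $T=\{y\in Y:K\times\{y\}\subseteq f^{-1}(W)\}$. By the classical tube lemma, $T$ is open in $\mathbf{Y}$: for each $y\in T$ and each $x\in K$ we pick a basic open $U_{x}\times V_{x}\subseteq f^{-1}(W)$ with $x\in U_{x}$, $y\in V_{x}$; the cover $\{U_{x}\}_{x\in K}$ of $K$ admits a finite subcover, and the corresponding finite intersection of the $V_{x}$'s is an open neighbourhood of $y$ contained in $T$. Only finitely many selections are made, so the tube lemma holds in $\mathbf{ZF}$. The key observation is that the defining condition of $T$ depends on $y$ only through $g(y)$, so $T$ is $g$-saturated: $T=g^{-1}(g(T))$. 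Put $H=g(T)$; then $g^{-1}(H)=T$ is open, and since $g$ is quotient, $H$ is open in $\mathbf{Z}$.

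It remains to verify $\langle x_{0},z_{0}\rangle\in U\times H\subseteq W$. We have $y_{0}\in T$ (because $K\subseteq W_{z_{0}}$), hence $z_{0}=g(y_{0})\in H$. If $\langle x,z\rangle\in U\times H$, there exists $y\in T\cap g^{-1}(z)$ (since $z\in g(T)$), and then $\langle x,y\rangle\in K\times\{y\}\subseteq f^{-1}(W)$ yields $\langle x,z\rangle=\langle x,g(y)\rangle\in W$. Thus $W$ is open and $f$ is quotient. The only place where a choice concern could intrude is the tube lemma, and compactness reduces it to finite selection; no other step picks elements from an unspecified family, so the proof is entirely within $\mathbf{ZF}$.
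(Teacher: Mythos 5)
The paper offers no proof of Theorem \ref{t3.11} at all: it merely cites Whitehead, Engelking and Michael and asserts that the classical argument survives in $\mathbf{ZF}$. Your proposal supplies that argument, and it is correct in substance: the reduction to showing $W$ open whenever $f^{-1}(W)$ is, the single choice of $y_{0}\in g^{-1}(z_{0})$, the openness of the slice $W_{z_{0}}$, the tube $T$ and its $g$-saturation, and the verification $\langle x_{0},z_{0}\rangle\in U\times H\subseteq W$ are exactly the standard steps, and each of them involves only finitely many (indeed single) selections. The one place where your $\mathbf{ZF}$ bookkeeping is stated imprecisely is the tube lemma: as written you ``pick a basic open $U_{x}\times V_{x}$ for each $x\in K$'', which is a simultaneous choice over the (typically infinite) set $K$, and the closing remark ``only finitely many selections are made'' does not apply to that step. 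The standard choice-free phrasing is to cover $K$ by the family of \emph{all} open $U'\subseteq X$ for which there exists an open $V'\ni y$ with $U'\times V'\subseteq f^{-1}(W)$ --- no selection is made at this stage --- extract a finite subcover $U_{1},\dots,U_{n}$ by compactness, and only then choose, for each of the finitely many indices $i$, a witnessing $V_{i}$; the neighbourhood $\bigcap_{i}V_{i}$ of $y$ lies in $T$. With that rewording the argument is literally within $\mathbf{ZF}$. For completeness one might also record that the shrinking property of locally compact Hausdorff spaces you invoke --- an open $U\ni x_{0}$ with $\text{cl}_{\mathbf{X}}(U)$ compact and contained in the given open set $W_{z_{0}}$ --- is itself $\mathbf{ZF}$-provable, since compact Hausdorff spaces are regular in $\mathbf{ZF}$ by the same device of covering with all admissible open sets before passing to a finite subcover.
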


Condition (ii) of Theorem \ref{t3.9} is a $\mathbf{ZF}$-version of the
theorem given in Exercise 3.10.J(b) of \cite{En} (see also Theorem 4.2 of 
\cite{mich}). Our proof to Theorem \ref{t3.9}(i) gives a distinct solution
of Exercise 3.10.J(c) of \cite{En} than the solution suggested in \cite{En}.

Theorem \ref{t3.12} below is known (see, for instance, \cite{Bo}, \cite{mich}
and Exercise 3.3.J of \cite{En}); however, the known proofs to it are not
proofs in $\mathbf{ZF}$ because $\mathbf{AC}$ is involved in them. This is
why we offer two distinct new $\mathbf{ZF}$-proofs to it. To some extent,
our first proof to Theorem \ref{t3.12} is similar to that shown in the hint
to Exercise 3.3.J of \cite{En} but, since we need to avoid $\mathbf{AC}$,
there are also essential differences between our proof and the known $%
\mathbf{ZFC}$-proofs.

\begin{theorem}
\label{t3.12} Let $\mathbf{X}$ be a locally compact Hausdorff sequential
space. Then $\mathbf{X}\times\mathbf{Y}$ is sequential for every sequential
space $\mathbf{Y}$.
\end{theorem}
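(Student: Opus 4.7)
The plan is to reduce the theorem to Whitehead's Theorem \ref{t3.11} combined with Corollary \ref{c3.10}, bypassing Theorem \ref{t3.9} altogether. A direct appeal to Theorem \ref{t3.9} would force us to upgrade ``locally compact Hausdorff sequential'' to ``locally sequentially compact plus $\mathbf{PCAC}(\mathbf{X},scl)$'', and in $\mathbf{ZF}$ neither upgrade is obviously free of choice: compact subsets of a sequential Hausdorff space are not known to be sequentially compact without countable choice, and sequential closedness is not countable-in-nature enough to give $\mathbf{PCAC}(\mathbf{X},scl)$ from local compactness alone. The key insight is that the test space $\mathbf{Z}(\mathbf{Y})$ built in Theorem \ref{t3.6} is a topological sum of copies of $\omega+1$, a factor which Corollary \ref{c3.10} already handles safely.

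To carry out the plan, let $\mathbf{Z}(\mathbf{Y})=\bigoplus_{s\in\mathcal{S}}\mathbf{Z}_s$ and $f:\mathbf{Z}(\mathbf{Y})\to\mathbf{Y}$ be as in Theorem \ref{t3.6}. Since $\mathbf{Y}$ is sequential, Theorem \ref{t3.6} asserts that $f$ is a quotient mapping. Because $\mathbf{X}$ is locally compact Hausdorff, Whitehead's Theorem \ref{t3.11} gives that
\[
\mathrm{id}_X\times f:\mathbf{X}\times\mathbf{Z}(\mathbf{Y})\to\mathbf{X}\times\mathbf{Y}
\]
is also a quotient mapping. By Proposition \ref{p3.1}, it therefore suffices to prove that $\mathbf{X}\times\mathbf{Z}(\mathbf{Y})$ is sequential.

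Canonically, $\mathbf{X}\times\mathbf{Z}(\mathbf{Y})=\bigoplus_{s\in\mathcal{S}}(\mathbf{X}\times\mathbf{Z}_s)$. A topological sum of sequential spaces is sequential in $\mathbf{ZF}$: each summand is clopen in the sum, so any convergent sequence in the sum is eventually contained in a single summand, whence a subset of the sum is sequentially closed if and only if its trace on each summand is sequentially closed, which in turn coincides with closedness. Each $\mathbf{Z}_s$ is homeomorphic to $\omega+1$ with its order topology, so each $\mathbf{X}\times\mathbf{Z}_s$ is homeomorphic to $\mathbf{X}\times(\omega+1)$, which is sequential by Corollary \ref{c3.10}. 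Therefore $\mathbf{X}\times\mathbf{Z}(\mathbf{Y})$ is sequential, and the theorem follows.

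The only real obstacle is recognizing the Whitehead-plus-test-space detour: once one notes that $\mathbf{Z}(\mathbf{Y})$ is a sum of copies of $\omega+1$ (a space small enough that Corollary \ref{c3.10} applies without any choice principle), the argument is short and purely topological, and it never needs to convert compactness into sequential compactness inside $\mathbf{X}$ itself.
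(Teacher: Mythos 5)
Your proposal is correct and follows essentially the same route as the paper's own proof: both use the test space $\mathbf{Z}(\mathbf{Y})$ from Theorem \ref{t3.6}, Corollary \ref{c3.10} to see that $\mathbf{X}\times(\omega+1)$ (hence the sum $\mathbf{X}\times\mathbf{Z}(\mathbf{Y})$) is sequential, Whitehead's Theorem \ref{t3.11} to make $\mathrm{id}_X\times f$ quotient, and Proposition \ref{p3.1} to conclude. The only difference is that you spell out explicitly why the topological sum of the sequential spaces $\mathbf{X}\times\mathbf{Z}_s$ is sequential, a step the paper leaves implicit.
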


\begin{proof} First, we notice that, by Corollary \ref{c3.10}, the space $\mathbf{X}\times (\omega+1)$ is sequential. Let $\mathbf{Y}$ be a sequential space and let $\mathbf{Z}(\mathbf{Y})$ be the topological sum of copies of $\omega+1$ described in Theorem \ref{t3.6}. Let $g:\mathbf{Z}(\mathbf{Y})\to\mathbf{Y}$ be a quotient mapping. Since $\mathbf{X}\times (\omega+1)$ is sequential,  we infer that $\mathbf{X}\times\mathbf{Z}(\mathbf{Y})$ is a sequential space. By Theorem \ref{t3.11}, the space $\mathbf{X}\times\mathbf{Y}$ is a quotient image of the sequential space $\mathbf{X}\times\mathbf{Z}(\mathbf{Y})$. In view of Proposition \ref{p3.1}, the space $\mathbf{X}\times\mathbf{Y}$ is sequential. 
\end{proof}

Using mathematical induction, one can easily deduce from Theorem \ref{t3.12}
that the following corollary is provable in $\mathbf{ZF}$:

\begin{corollary}
\label{c3.13} For $n\in \mathbb{N}$, let $\{\mathbf{X}_{i}:i\in n\}$ be a
collection of locally compact Hausdorff sequential spaces. Then the product 
$\mathbf{X}=\prod_{i\in n}\mathbf{X}_{i}$ is sequential.
\end{corollary}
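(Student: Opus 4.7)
The plan is to argue by finite induction on $n\in\mathbb{N}$, with Theorem \ref{t3.12} supplying the engine for the inductive step. Because $n$ is a von Neumann natural number, the induction is entirely internal to $\mathbf{ZF}$ and invokes no choice principle: the canonical homeomorphism $\prod_{i\in n+1}\mathbf{X}_i\cong\mathbf{X}_n\times\prod_{i\in n}\mathbf{X}_i$ (sending $x$ to $\langle x\!\upharpoonright\! n,x(n)\rangle$) is definable directly from the finite indexing, so no hidden choices of factorings arise as $n$ grows.

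For the base case $n=1$, the product $\prod_{i\in 1}\mathbf{X}_i$ is homeomorphic to $\mathbf{X}_0$, which is sequential by hypothesis. For the inductive step, suppose the conclusion holds for $n$ and consider a collection $\{\mathbf{X}_i:i\in n+1\}$ of locally compact Hausdorff sequential spaces. Applying the inductive hypothesis to $\{\mathbf{X}_i:i\in n\}$, the space $\mathbf{Y}=\prod_{i\in n}\mathbf{X}_i$ is sequential. Since $\mathbf{X}_n$ is locally compact, Hausdorff and sequential, Theorem \ref{t3.12} (with $\mathbf{X}=\mathbf{X}_n$ and with $\mathbf{Y}$ as above) yields that $\mathbf{X}_n\times\mathbf{Y}$ is sequential. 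Composing with the canonical identification $\mathbf{X}_n\times\mathbf{Y}\cong\prod_{i\in n+1}\mathbf{X}_i$ (or equivalently, using Proposition \ref{p3.1} applied to this homeomorphism) finishes the step.

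No genuine obstacle arises: the entire topological content has been packaged into Theorem \ref{t3.12}. The one point worth double-checking is that the inductive hypothesis delivers only sequentiality of $\mathbf{Y}=\prod_{i\in n}\mathbf{X}_i$ and not local compactness; but this matches exactly the asymmetric hypothesis of Theorem \ref{t3.12}, which requires local compactness only of one factor, so the induction closes cleanly without any need to propagate the stronger property through the product.
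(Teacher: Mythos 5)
Your proof is correct and takes exactly the route the paper intends: the paper states only that Corollary~\ref{c3.13} follows ``using mathematical induction'' from Theorem~\ref{t3.12}, and your induction on $n$, applying Theorem~\ref{t3.12} with $\mathbf{X}=\mathbf{X}_n$ and $\mathbf{Y}=\prod_{i\in n}\mathbf{X}_i$, is precisely that argument spelled out. Your observation that the asymmetric hypothesis of Theorem~\ref{t3.12} means local compactness need not be propagated through the product is the right point to check, and it closes the induction as you say.
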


It is worth noticing that the following important Theorem 3.5 of \cite{kw}
follows directly from Corollary \ref{c3.13} and Proposition \ref{p3.4}:

\begin{corollary}
\label{c3.14} (Theorem 3.5 of \cite{kw}) For every $n\in\mathbb{N}$, the
space $\mathbb{R}^n$ is sequential if and only if $\mathbb{R}$ is sequential.
\end{corollary}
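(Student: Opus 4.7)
The plan is to derive the biconditional directly from the two results cited in the statement, with a single observation about the $\mathbf{ZF}$-status of local compactness of $\mathbb{R}$. I would handle the trivial case $n = 1$ separately and then fix $n \geq 2$.

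For the forward implication, I would note that the natural topology $\tau_{nat}$ on $\mathbb{R}^n$ induced by the Euclidean metric coincides with the Tychonoff product topology of $n$ copies of $\langle \mathbb{R}, \tau_{nat}\rangle$; this equivalence is a standard $\mathbf{ZF}$ fact that requires no choice. Viewing $\mathbb{R}^n$ as $\mathbb{R} \times \mathbb{R}^{n-1}$, both non-empty topological spaces, Proposition \ref{p3.4} immediately yields that $\mathbb{R}$ is sequential.

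For the reverse implication, I would first observe that $\mathbb{R}$ with its natural topology is a locally compact Hausdorff space in $\mathbf{ZF}$, since the Heine-Borel characterization of compact subsets of $\mathbb{R}$ is provable in $\mathbf{ZF}$ and every point of $\mathbb{R}$ has closed bounded intervals as a neighbourhood base. Assuming that $\mathbb{R}$ is sequential, the constant family $\{\mathbf{X}_i : i \in n\}$ with $\mathbf{X}_i = \mathbb{R}$ for each $i \in n$ is then a finite collection of locally compact Hausdorff sequential spaces, and Corollary \ref{c3.13} gives that $\prod_{i \in n} \mathbb{R}$ is sequential. Since this product is homeomorphic (in $\mathbf{ZF}$) to $\mathbb{R}^n$ equipped with $\tau_{nat}$, we conclude that $\mathbb{R}^n$ is sequential.

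I do not expect any substantive obstacle, as all the real work has already been done in establishing Corollary \ref{c3.13}, which in turn rests on Theorem \ref{t3.12} applied inductively with $\mathbf{X} = \mathbb{R}$. The only point worth a moment of care is confirming that the product topology on $\prod_{i \in n}\mathbb{R}$ agrees with the metric topology $\tau_{nat}$ on $\mathbb{R}^n$, which is a routine verification that needs no form of choice.
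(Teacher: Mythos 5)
Your proposal is correct and follows exactly the route the paper takes: the paper states that Corollary \ref{c3.14} ``follows directly from Corollary \ref{c3.13} and Proposition \ref{p3.4}'', which is precisely your use of Proposition \ref{p3.4} for the forward direction and Corollary \ref{c3.13} (applied to $n$ copies of the locally compact Hausdorff space $\mathbb{R}$) for the converse. Your added remarks on the $\mathbf{ZF}$-provability of the Heine--Borel theorem and the agreement of the product and Euclidean topologies are sound but merely make explicit what the paper leaves implicit.
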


Part (a) of the following new theorem is to be applied to a $\mathbf{ZF}$%
-proof that, in Theorem \ref{t3.12}, local compactness can be replaced by
local countable compactness:

\begin{theorem}
\label{t3.15} (a) If a countably compact space $\mathbf{X}$ is sequential,
then it satisfies $\mathbf{PCAC}(\mathbf{X}, scl)$.\newline
(b) If $\mathbf{X}$ is a first-countable regular space such that $\mathbf{%
PCAC}(\mathbf{X}, scl)$ holds, then $\mathbf{X}$ is sequential.\newline
(c) A first-countable, regular countably compact space $\mathbf{X}$ is
sequential if and only if $\mathbf{PCAC}(\mathbf{X}, scl)$ holds.
\end{theorem}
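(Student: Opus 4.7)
The strategy is to prove (a) and (b) separately; (c) then follows by combining them with regularity.

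For (a), suppose $\mathbf{X}$ is countably compact and sequential, and fix an infinite countable family $\{A_n:n\in\omega\}$ of non-empty sequentially closed subsets of $\mathbf{X}$. Since $\mathbf{X}$ is sequential, each $A_n$ is closed, so the sets $D_n=\text{cl}_{\mathbf{X}}(\bigcup_{k\geq n}A_k)$ form a decreasing sequence of non-empty closed sets. Countable compactness in $\mathbf{ZF}$ is equivalent to every such sequence having non-empty intersection (the contrapositive direction just converts complements into a countable open cover without finite subcover), so I would fix $x\in\bigcap_{n\in\omega}D_n$. If the set $\{n:x\in A_n\}$ is infinite, then assigning $x$ to each of its members already gives a partial choice function. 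Otherwise there is $n_0$ with $x\notin\bigcup_{k\geq n_0}A_k$, and since $x\in D_{n_0}$, this union is not closed, hence (by sequentiality) not sequentially closed. I would then fix a sequence $(y_m)_{m\in\omega}$ in $\bigcup_{k\geq n_0}A_k$ converging to some $y\notin\bigcup_{k\geq n_0}A_k$, and define $\phi(m)=\min\{k\geq n_0:y_m\in A_k\}$. If $\phi(\omega)$ were finite, contained in $\{n_0,\dots,N\}$, the entire sequence $(y_m)$ would lie in the closed set $\bigcup_{k=n_0}^{N}A_k$, forcing $y$ into it and contradicting the choice of $y$. So $\phi(\omega)$ is an infinite subset of $\omega$, and setting $g(n)=y_{\min\phi^{-1}(\{n\})}$ for $n\in\phi(\omega)$ supplies a partial choice function on $\{A_n:n\in\omega\}$.

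For (b), assume $\mathbf{X}$ is first-countable and regular with $\mathbf{PCAC}(\mathbf{X},scl)$, take a sequentially closed $A\subseteq X$, and fix $x\in\text{cl}_{\mathbf{X}}(A)$. By first-countability I would fix a countable neighbourhood base $(V_n)_{n\in\omega}$ at $x$ and set $U_n=V_0\cap\cdots\cap V_n$, obtaining a decreasing countable base at $x$. Put $B_n=A\cap\text{cl}_{\mathbf{X}}(U_n)$; each $B_n$ is sequentially closed as an intersection of sequentially closed sets, and is non-empty because $A\cap U_n\neq\emptyset$. Then $\mathbf{PCAC}(\mathbf{X},scl)$ yields an infinite $N\subseteq\omega$ and a choice function $f$ with $f(n)\in B_n$ for each $n\in N$. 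Enumerating $N$ increasingly as $(n_k)_{k\in\omega}$, the sequence $(f(n_k))_{k\in\omega}$ lies in $A$; I would show it converges to $x$ as follows. Given any open neighbourhood $V$ of $x$, regularity produces an open $W$ with $x\in W$ and $\text{cl}_{\mathbf{X}}(W)\subseteq V$, and first-countability then gives $m$ with $U_m\subseteq W$, whence $\text{cl}_{\mathbf{X}}(U_n)\subseteq\text{cl}_{\mathbf{X}}(U_m)\subseteq V$ for every $n\geq m$. For $k$ large enough $n_k\geq m$, so $f(n_k)\in V$. Since $A$ is sequentially closed and $f(n_k)\to x$, we get $x\in A$. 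Hence $\text{cl}_{\mathbf{X}}(A)\subseteq A$ and $\mathbf{X}$ is sequential.

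Part (c) then follows at once: if $\mathbf{X}$ is first-countable, regular and countably compact, (a) gives the forward implication and (b) gives the converse.

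The main obstacle I anticipate is in (a): after using countable compactness to produce $x$, one must rule out the pathological subcase where $x\notin\bigcup_{k\geq n_0}A_k$ and yet the sequence extracted from the failure of sequential closedness has all its indices $\phi(m)$ confined to a bounded range; the crucial ingredient there is that finite unions of closed sets are closed, which forces an infinite $\phi(\omega)$. In (b), the trap to avoid is trying to refine the base so that $\text{cl}_{\mathbf{X}}(U_{n+1})\subseteq U_n$, which would generally require a form of countable choice; instead, regularity together with first-countability already guarantees that the tails $\text{cl}_{\mathbf{X}}(U_n)$ are absorbed into every neighbourhood of $x$, which is all the convergence argument actually needs.
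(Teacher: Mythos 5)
Your proof is correct and follows essentially the same route as the paper's: part (b) is the paper's argument almost verbatim (the traces $A\cap\text{cl}_{\mathbf{X}}(U_n)$ of a decreasing neighbourhood base, a partial choice function for them, and regularity to force convergence of the chosen points to $x$), and part (a) rests on the same two ingredients as the paper's (countable compactness applied to the decreasing tail unions of the family, plus least-index selection along a convergent sequence to manufacture a partial choice function), merely organized as a direct argument using the closures $D_n=\text{cl}_{\mathbf{X}}\bigl(\bigcup_{k\geq n}A_k\bigr)$ instead of the paper's proof by contradiction, in which the tail unions themselves are shown to be closed from the assumed failure of $\mathbf{PCAC}(\mathbf{X},scl)$. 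Both versions are valid in $\mathbf{ZF}$.
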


\begin{proof}
(a) Suppose that $\mathbf{X}$ is a countably compact sequential space such that $\mathbf{PCAC}(\mathbf{X}, scl)$ is not satisfied. Let $\mathcal{E}%
=\{E_{n}:n\in \omega\}$ be a family of closed subsets of $\mathbf{X}$
without a partial choice function. For every $n\in \omega$, let 
\begin{equation*}
F_{n}=\bigcup \{E_{i}:i\in\omega\setminus n\}\text{.}
\end{equation*}
We claim that $F_{n}$ is closed in $\mathbf{X}$. To this aim, since $\mathbf{X}$ is sequential, it suffices to show that $F_n$ is sequentially closed in $\mathbf{X}$. We fix a sequence $(x_k)_{k\in\omega}$ of points of $F_n$ which converges in $\mathbf{X}$ to a point $x$. From the assumption that $\mathcal{E}$ does not have a partial choice function, we deduce that there exists $k_0\in\omega\setminus n$ such that $x_k\in\bigcup\{E_i: i\in k_0\setminus n\}$ for each $k\in\omega$. Since all members of $\mathcal{E}$ are closed in $\mathbf{X}$, we infer that $x\in\bigcup\{E_i: i\in k_0\setminus n\}$. Hence $x\in F_n$, so $F_n$ is sequentially closed in $\mathbf{X}$. It follows from the countable compactness of $\mathbf{X}$ that  $\bigcap \{F_{n}:n\in 
\omega\}\neq \emptyset $. It is straightforward to see that any point in
the latter intersection belongs to infinitely many members of $\mathcal{E}$.
Hence, $\mathcal{E}$ has a partial choice function, contradicting our
hypothesis.\smallskip\ 

(b) Assume that $\mathbf{PCAC}(\mathbf{X}, scl)$ holds and $\mathbf{X}$ is a first-countable regular space. Let $A$ be a sequentially closed set in $\mathbf{X}$ and let $y\in\text{cl}_{\mathbf{X}}(A)$. Let $\{U_n: n\in\omega\}$ be a countable base of neighbourhoods of $y$ such that $U_{n+1}\subseteq U_n$ for each $n\in\omega$. Let $G_n=A\cap\text{cl}_{\mathbf{X}}(U_n)$ for each $n\in\omega$. The sets $G_n$ are sequentially closed in $\mathbf{X}$, so the collection $\{G_n: n\in\omega\}$ has a partial choice function. Let $N$ be an infinite subset of $\omega$ such that the product $\prod_{n\in N}G_n$ is non-empty. Since $\mathbf{X}$ is regular, if $f\in\prod_{n\in N}G_n$, then the sequence $(f(n))_{n\in N}$ is convergent in $\mathbf{X}$ to the point $y$. Hence $y\in A$, so $A$ is closed in $\mathbf{X}$.  This completes the proof of (b). Condition (c) is an immediate consequence of (a) and (b).
\end{proof}

Now, we are able to show that the following known theorem of $\mathbf{ZFC}$
(see Theorem 4.2 of \cite{mich}) is provable in $\mathbf{ZF}$:

\begin{theorem}
\label{t3.16} Let $\mathbf{X}_1, \mathbf{X}_2$ and $\mathbf{Y}$ be sequential
spaces such that $\mathbf{X}_{1}$ is locally countably compact. Suppose that 
$g:\mathbf{X}_2\to\mathbf{Y}$ is a quotient mapping. Then:

\begin{enumerate}
\item[(i)] $\mathbf{X}_1\times\mathbf{X}_2$ is sequential;

\item[(ii)] the mapping $f=\text{id}_{X_1}\times g:\mathbf{X}_1\times\mathbf{%
X}_2\to\mathbf{X}_1\times\mathbf{Y}$ is quotient.
\end{enumerate}
\end{theorem}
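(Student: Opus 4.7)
The plan is to mimic the proof of Theorem \ref{t3.9}, with countable compactness replacing sequential compactness. The structural issue is that Theorem \ref{t3.9} relied on Lemma \ref{l3.8}(ii), whose proof used both $\mathbf{PCAC}(\mathbf{X}_1, scl)$ and sequential compactness of the chosen neighbourhood $C$ in order to extract a convergent subsequence from a partial choice function. In the present setting we only have that $C$ is closed and countably compact; by Proposition \ref{p3.3} such a $C$ is sequential, and Theorem \ref{t3.15}(a) therefore supplies $\mathbf{PCAC}(C, scl)$, but that alone is still not enough. The main obstacle, and the main new ingredient, is to sidestep the need for a convergent subsequence by exploiting sequential openness of $O$ directly through the tails $F_n = \bigcup_{k \geq n} A_k$, in the spirit of the proof of Theorem \ref{t3.15}(a).

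Concretely, I prove the following replacement for Lemma \ref{l3.8}(ii): if $C$ is a closed countably compact subset of $\mathbf{X}_1$ and $O$ is sequentially open in $\mathbf{X}_1 \times \mathbf{X}_2$, then $V = \{t \in X_2 : C \times \{t\} \subseteq O\}$ is open in $\mathbf{X}_2$. Arguing by contradiction, fix $(y_n)_{n \in \omega}$ in $X_2 \setminus V$ converging to $y \in V$ and set $A_n = C \setminus O_{y_n}$, which is non-empty and closed in $C$ by Lemma \ref{l3.8}(i) (whose proof uses only sequentiality of $\mathbf{X}_1$). I then show $F_n = \bigcup_{k \geq n} A_k$ is sequentially closed in $C$. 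Given $(c_j)_j \to c$ in $C$ with $c_j \in F_n$, the canonically defined integer $\iota(c_j) = \min\{i \geq n : c_j \in A_i\}$ exists. If $(\iota(c_j))_j$ is bounded by some $M$, then $c$ lies in the closed finite union $\bigcup_{i=n}^{M} A_i \subseteq F_n$. If it is unbounded, one canonically extracts a subsequence $(c_{j_k})_k$ with $\iota(c_{j_k})$ strictly increasing; then $(c_{j_k}, y_{\iota(c_{j_k})})_k$ lies outside $O$ (since $c_{j_k} \in A_{\iota(c_{j_k})} = C \setminus O_{y_{\iota(c_{j_k})}}$) and converges to $(c, y) \in O$ (because $c \in C$ and $y \in V$), contradicting that $O$ is sequentially open. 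Hence $F_n$ is sequentially closed, so closed in $C$, which is sequential by Proposition \ref{p3.3}. The decreasing family $\{F_n\}$ of non-empty closed subsets of the countably compact space $C$ has non-empty intersection; picking $z$ in it and canonically enumerating the (necessarily infinite) set $\{k \in \omega : z \in A_k\}$ as $n_0 < n_1 < \cdots$, the sequence $(z, y_{n_k})_k$ lies outside $O$ yet converges to $(z, y) \in O$, a final contradiction. Thus $V$ is open.

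With this replacement lemma in hand, parts (i) and (ii) follow as in the proof of Theorem \ref{t3.9}. For (i), given a sequentially open $O$ in $\mathbf{X}_1 \times \mathbf{X}_2$ and $(s, t) \in O$, Lemma \ref{l3.8}(i) yields $O_t$ open containing $s$, local countable compactness produces an open $U \ni s$ with $C = \text{cl}_{\mathbf{X}_1}(U)$ countably compact and $C \subseteq O_t$, and the replacement lemma then makes $V$ open in $\mathbf{X}_2$, so $(s, t) \in U \times V \subseteq C \times V \subseteq O$ witnesses that $O$ is open. For (ii), given $W \subseteq X_1 \times Y$ with $f^{-1}(W)$ open in $\mathbf{X}_1 \times \mathbf{X}_2$ and $(x_0, z_0) \in W$, pick $y_0 \in g^{-1}(z_0)$ and an open $U \ni x_0$ with $D = \text{cl}_{\mathbf{X}_1}(U)$ countably compact and $D \times \{y_0\} \subseteq f^{-1}(W)$; setting $H = \{y \in Y : D \times g^{-1}(y) \subseteq f^{-1}(W)\}$, the replacement lemma applied to the open, hence sequentially open, set $f^{-1}(W)$ and to $D$ shows $g^{-1}(H) = \{z \in X_2 : D \times \{z\} \subseteq f^{-1}(W)\}$ is open in $\mathbf{X}_2$, so $H$ is open in $\mathbf{Y}$ because $g$ is quotient, and $(x_0, z_0) \in U \times H \subseteq W$.
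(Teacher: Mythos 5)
Your proof is correct, and its overall architecture --- establish that $V=\{t\in X_2: C\times\{t\}\subseteq O\}$ is open for every closed countably compact $C\subseteq X_1$ and every sequentially open $O$, then rerun the proof of Theorem \ref{t3.9} --- is the same as the paper's. Where you genuinely diverge is in how that key openness statement is obtained. The paper notes that $\mathbf{C}$, as a closed subspace of the sequential space $\mathbf{X}_1$, is sequential, invokes Theorem \ref{t3.15}(a) to get $\mathbf{PCAC}(\mathbf{C},scl)$, then uses the fact that a sequential countably compact space is sequentially compact, so that the proof of Lemma \ref{l3.8}(ii) applies. You bypass both $\mathbf{PCAC}$ and sequential compactness: you show that the tails $F_n=\bigcup_{k\geq n}A_k$ are sequentially closed (using the canonical index function $\iota$ and the sequential openness of $O$ to rule out sequences with unbounded indices), hence closed in the sequential subspace $\mathbf{C}$, and then apply countable compactness directly to the decreasing family $\{F_n\}$; a point of $\bigcap_n F_n$ lies in infinitely many $A_k$ and yields the final contradiction. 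This is in effect an inlining of the tail-set trick from the proof of Theorem \ref{t3.15}(a) into the argument of Lemma \ref{l3.8}(ii). What your route buys is self-containedness and transparently choice-free selections (every choice you make is a minimum over a set of natural numbers or a single fixed witness), and it avoids relying on the $\mathbf{ZF}$-validity of the implication \textquotedblleft sequential and countably compact implies sequentially compact\textquotedblright. What the paper's route buys is brevity, given that Lemma \ref{l3.8}(ii) and Theorem \ref{t3.15}(a) are already on record. Both proofs also depend on the observation --- which you make explicit, to your credit, and which the paper leaves implicit --- that the proof of Lemma \ref{l3.8}(i) uses only the sequentiality of $\mathbf{X}_1$ and not its other hypotheses.
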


\begin{proof} Let $C$ be a countably compact closed subset of $\mathbf{X}_1$. Since $\mathbf{X}_1$ is sequential, the subspace $\mathbf{C}$ of $\mathbf{X}_1$ is sequential. By Theorem \ref{t3.15}(a), $\mathbf{PCAC}(\mathbf{C}, scl)$ holds. Since the space $\mathbf{C}$ is both sequential and countably compact, $\mathbf{C}$ is sequentially compact (see the proof to Theorem 3.10.31 of \cite{En}).  In the light of the proof to Lemma \ref{l3.8}(ii), for every sequentially open in $\mathbf{X}_1\times\mathbf{X}_2$ set $O$, the set 
$\{t\in X_2: C\times\{t\}\subseteq O\}$ is open in $\mathbf{X}_2$. Now, in much the same way, as in the proof to Theorem \ref{t3.9},  we can show  that $\mathbf{X}_1\times\mathbf{X}_2$ is sequential, and (ii) is also satisfied. 
\end{proof}

\begin{remark}
\label{r3.17}  Our second $\mathbf{ZF}$-proof to Theorem \ref{t3.12} is to
notice that, since every locally compact Hausdorff space is locally
countably compact, Theorem \ref{t3.12} is an immediate consequence of
Theorem \ref{t3.16}.
\end{remark}

In the light of Theorem 8 of \cite{hkt}, if $\mathbf{X}_1$ and $\mathbf{X}_2$ are
Loeb metric spaces such that the canonical projection $\pi_1:\mathbf{X}%
_1\times\mathbf{X}_2\to\mathbf{X}_1$ is closed, then $\mathbf{X}_1\times%
\mathbf{X}_2$ is Loeb. We need the following more general lemma:

\begin{lemma}
\label{l3.18} Let $\mathbf{X}_1$ and $\mathbf{X}_2$ be Loeb spaces such that
the projection $\pi_1:\mathbf{X}_1\times\mathbf{X}_2\to\mathbf{X}_1$ is
closed. Then $\mathbf{X}_1\times\mathbf{X}_2$ is Loeb.
\end{lemma}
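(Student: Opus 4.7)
The plan is to combine the Loeb functions of $\mathbf{X}_1$ and $\mathbf{X}_2$ fiberwise, using the hypothesis that $\pi_1$ is closed to guarantee that the projections of closed subsets of the product remain closed in $\mathbf{X}_1$. Fix a Loeb function $f_1$ of $\mathbf{X}_1$ and a Loeb function $f_2$ of $\mathbf{X}_2$; note that both exist by hypothesis, and the argument is trivial if either factor is empty, so assume both are non-empty.

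Given any non-empty closed subset $C$ of $\mathbf{X}_1\times\mathbf{X}_2$, the first step is to observe that $\pi_1(C)$ is a non-empty closed subset of $\mathbf{X}_1$, since $\pi_1$ is a closed surjection onto its image and $C\neq\emptyset$. Thus we may set $x_1(C)=f_1(\pi_1(C))\in\pi_1(C)$. The second step is to consider the fiber
\[
C_{x_1(C)}=\{y\in X_2:\langle x_1(C),y\rangle\in C\},
\]
which is a closed subset of $\mathbf{X}_2$ because it equals $\pi_2\big((\{x_1(C)\}\times X_2)\cap C\big)$ and, equivalently, it is the preimage of $C$ under the continuous injection $y\mapsto\langle x_1(C),y\rangle$. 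Since $x_1(C)\in\pi_1(C)$, the fiber $C_{x_1(C)}$ is non-empty, so we may set $x_2(C)=f_2(C_{x_1(C)})$.

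Then $F(C):=\langle x_1(C),x_2(C)\rangle$ lies in $C$ by construction, and the assignment $C\mapsto F(C)$ is a well-defined function on $Cl(\mathbf{X}_1\times\mathbf{X}_2)\setminus\{\emptyset\}$, hence a Loeb function of $\mathbf{X}_1\times\mathbf{X}_2$. There is no real obstacle in this argument since each step uses a fixed function previously chosen (no further appeal to choice is needed); the only point that must be handled carefully is the justification that $\pi_1(C)$ is closed, which is exactly what the hypothesis on $\pi_1$ supplies.
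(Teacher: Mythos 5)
Your proposal is correct and follows essentially the same argument as the paper: fix Loeb functions $f_1$, $f_2$, use closedness of $\pi_1$ to pick $x_1(C)=f_1(\pi_1(C))$, then pick $x_2(C)$ from the (closed, non-empty) fiber over $x_1(C)$ via $f_2$. Your extra justification that the fiber is closed (as the preimage of $C$ under $y\mapsto\langle x_1(C),y\rangle$) is a detail the paper leaves implicit, but the construction is identical.
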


\begin{proof} Let $f_i$ be a Loeb function of $\mathbf{X}_i$ for $i\in\{1,2\}$.  For a non-empty closed set $A$ of $\mathbf{X}_1\times\mathbf{X}_2$, let $x_1(A)=f_{1}(\pi_1(A))$. The set $B_A=\{x\in X_2:\langle x_1(A), x\rangle\in A\}$ is non-empty and closed in $\mathbf{X}_2$. We define $x_2(A)=f_2(B_A)$. Then, by putting $g(A)=\langle x_1(A), x_2(A)\rangle$ for each non-empty closed set $A$ of $\mathbf{X}_1\times\mathbf{X}_2$, we define a Loeb function $g$ of $\mathbf{X}_1\times\mathbf{X}_2$.
\end{proof}

\begin{theorem}
\label{t3.19} Suppose that $\mathbf{X}_1$ and $\mathbf{X}_2$ are sequential Loeb
spaces such that $\mathbf{X}_1$ is a $T_1$-space and $\mathbf{X}_2$ is a
sequentially compact $T_3$-space. Then the product $\mathbf{X}_1\times 
\mathbf{X}_2$ is a sequential Loeb space.
\end{theorem}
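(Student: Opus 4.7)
The plan is to split the conclusion into its two assertions and handle them with the tools already established in the paper: Theorem \ref{t3.16} for sequentiality and Lemma \ref{l3.18} for the Loeb property.

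For sequentiality, my first move is to upgrade the hypotheses on $\mathbf{X}_2$ to local countable compactness in the sense of Definition \ref{d3.7}. Sequential compactness implies countable compactness in $\mathbf{ZF}$ (every sequence has a convergent subsequence, hence a cluster point). Because $\mathbf{X}_2$ is $T_3$, hence regular and Hausdorff, every point $x$ of an open set $V$ has an open neighbourhood $U$ with $\text{cl}_{\mathbf{X}_2}(U)\subseteq V$; and $\text{cl}_{\mathbf{X}_2}(U)$, being closed in the countably compact space $\mathbf{X}_2$, is itself countably compact. Thus $\mathbf{X}_2$ is locally countably compact. Applying Theorem \ref{t3.16}(i) with $\mathbf{X}_2$ in the role of the locally countably compact factor, $\mathbf{X}_1$ in the role of the second sequential factor and $g=\text{id}_{X_1}$ (so $\mathbf{Y}=\mathbf{X}_1$), I conclude that $\mathbf{X}_2\times\mathbf{X}_1$, and hence $\mathbf{X}_1\times\mathbf{X}_2$, is sequential.

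For the Loeb property, by Lemma \ref{l3.18} it suffices to verify that the projection $\pi_1\colon \mathbf{X}_1\times\mathbf{X}_2\to\mathbf{X}_1$ is a closed mapping. Let $F$ be a non-empty closed subset of $\mathbf{X}_1\times\mathbf{X}_2$. Since $\mathbf{X}_1$ is sequential, it is enough to show that $\pi_1(F)$ is sequentially closed. Take a sequence $(x_n)_{n\in\omega}$ in $\pi_1(F)$ converging in $\mathbf{X}_1$ to some $x$. For each $n$, the set
\[
F_n=\{y\in X_2: \langle x_n, y\rangle \in F\}
\]
is non-empty (because $x_n\in\pi_1(F)$) and closed in $\mathbf{X}_2$ (as the preimage of $F$ under the continuous map $y\mapsto\langle x_n,y\rangle$). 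The key point, where the Loeb hypothesis on $\mathbf{X}_2$ does real work, is that a Loeb function $f_2$ of $\mathbf{X}_2$ yields a choice $y_n=f_2(F_n)$ for all $n$ simultaneously without invoking any form of $\mathbf{AC}$. By the sequential compactness of $\mathbf{X}_2$, some subsequence $(y_{n_k})_{k\in\omega}$ converges in $\mathbf{X}_2$ to a point $y\in X_2$. Then $(\langle x_{n_k}, y_{n_k}\rangle)_{k\in\omega}$ is a sequence in $F$ converging in $\mathbf{X}_1\times\mathbf{X}_2$ to $\langle x,y\rangle$; as every closed set is sequentially closed, $\langle x,y\rangle\in F$, so $x\in\pi_1(F)$. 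This proves $\pi_1$ is closed, and Lemma \ref{l3.18} then gives that $\mathbf{X}_1\times\mathbf{X}_2$ is Loeb.

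The main obstacle is the choice step $y_n\in F_n$: without the Loeb assumption on $\mathbf{X}_2$, one would need a countable choice principle applied to the family $\{F_n:n\in\omega\}$ of closed subsets of $\mathbf{X}_2$, which is unavailable in $\mathbf{ZF}$. The $T_3$ assumption on $\mathbf{X}_2$ is used only to produce local countable compactness for Theorem \ref{t3.16}; the $T_1$ assumption on $\mathbf{X}_1$ does not appear to be strictly required by the argument above, but I would keep it in the statement as the natural separation axiom complementing the $T_3$ condition on the other factor and check during the write-up whether it is needed in any tacit reference to uniqueness of limits.
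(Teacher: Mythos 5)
Your treatment of the Loeb half coincides with the paper's proof: Lemma \ref{l3.18} reduces the problem to closedness of $\pi_1$, and you verify this by applying a Loeb function of $\mathbf{X}_2$ to the fibre-sections of a closed set $F$ over the terms of a convergent sequence and then invoking sequential compactness; your $F_n$ is exactly the paper's $K_n=\pi_2(L_n)$. (Your remark that the $T_1$ hypothesis on $\mathbf{X}_1$ is never used is consistent with the paper, whose proof does not invoke it either.) For sequentiality the paper simply cites Theorem \ref{t3.9}: a sequentially compact regular space is locally \emph{sequentially} compact, and $\mathbf{PCAC}(\mathbf{X}_2,scl)$ holds because $\mathbf{X}_2$ is sequential and Loeb, so sequentially closed sets are closed and the Loeb function chooses from them.

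Your detour through Theorem \ref{t3.16} is where the one genuine gap lies: the assertion that sequential compactness implies countable compactness in $\mathbf{ZF}$. Your parenthetical argument only shows that every sequence has a cluster point; to pass from that to the open-cover form of countable compactness one must produce a sequence witnessing the failure of a putative countable cover $\{U_n:n\in\omega\}$ without finite subcover, i.e.\ choose a point from each of the non-empty closed sets $X_2\setminus\bigcup_{i\le n}U_i$, and that selection is not available in $\mathbf{ZF}$ for arbitrary spaces --- this is precisely the kind of implication the paper (and \cite{kyr}) is careful to avoid taking for granted. The gap is repairable with the hypotheses you already have: those sets are closed, so the Loeb function of $\mathbf{X}_2$ selects from them and the cluster-point argument then yields a contradiction. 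But the repair is unnecessary if you argue as the paper does: regularity plus sequential compactness gives local sequential compactness directly (closed subsets of a sequentially compact space are sequentially compact, with no choice involved), and Theorem \ref{t3.9} applies at once.
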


\begin{proof}
That $\mathbf{X}_{1}\times \mathbf{X}_{2}$ is sequential follows at once from
Theorem \ref{t3.9}. In view of Lemma \ref{l3.18}, to prove that $\mathbf{X}_{1}\times \mathbf{X}_{2}$ is
Loeb, it suffices,  to show that the projection $%
\pi _{2}:\mathbf{X}_1\times\mathbf{X}_2\to\mathbf{X}_2$ is a closed map. Let $f_{i}$ be a Loeb function of $\mathbf{X%
}_{i}$ for $i\in\{1, 2\}$. Fix a closed subset $G$ of $\mathbf{X}_{1}\times \mathbf{X}_{2}$. We
claim that $G_{1}=\pi _{1}(G)$ is sequentially closed. To this end, fix a convergent in $\mathbf{X}_1$ to a point $x$
sequence $(x_{n})_{n\in \omega}$ of points of $G_{1}$. We show that $x\in G_{1}$. For every $n\in \omega$, the subspace 
\[
l_{n}=\{x_{n}\}\times \mathbf{X}_{2},
\]%
of $\mathbf{X}_1\times\mathbf{X}_2$ is homeomorphic  with $\mathbf{X}_{2}$, so $l_n$ is sequential. Since $%
L_{n}=l_{n}\cap G$ is a sequentially closed subset of $l_{n}$ and $l_n$ is closed in $\mathbf{X}_1\times\mathbf{X}_2$, it follows
that $L_{n}$ is closed in $\mathbf{X}_1\times\mathbf{X}_2$. For every $n\in\omega$, the set $K_{n}=\pi _{2}(L_{n})$ is non-empty and closed in $\mathbf{X}_{2}$, so we can define  $y_{n}=f_2(K_n)$. It follows from the sequential compactness of $\mathbf{X}%
_{2}$ that $(y_{n})_{n\in \mathbb{N}}$
has a subsequence $(y_{k_{n}})_{n\in \mathbb{N}}$ converging in $\mathbf{X}_2$ to some point $%
y$. Then $(\langle x_{k_{n}},y_{k_{n}}\rangle)_{n\in\omega}$ is a sequence of
points of $G$ converging to $\langle x,y\rangle$. Thus, $\langle x,y\rangle\in G$ and $x\in G_{1}$ as
required. Since $\mathbf{X}_{1}$ is sequential, it follows that $G_{1}$ is
closed in $\mathbf{X}_{1}$.\medskip 
\end{proof}

The following problems are unsolved:

\begin{problem}
\label{prob3.20} Is there a model of $\mathbf{ZF}$ in which there exist a
locally sequentially compact sequential space $\mathbf{X}$ and a sequential
space $\mathbf{Y}$ such that $\mathbf{X}\times\mathbf{Y}$ is not sequential?
\end{problem}

\begin{problem}
\label{prob3.21} Is there a model of $\mathbf{ZF}$ in which there are
metrizable sequential spaces $\mathbf{X}$ and  $\mathbf{Y}$ such that $\mathbf{X}%
\times\mathbf{Y}$ is not sequential?
\end{problem}

\section{On when $\mathbb{R}$ is sequential and related problems}

In this section, we pay a special attention to spaces which are sequential
when $\mathbb{R}$ is sequential. Among other results, we prove that $\mathbb{%
R}$ is sequential if and only if every second-countable compact Hausdorff
space is sequential. In the spirit of Theorem 4.55 of \cite{herl}, we give a
number of necessary and sufficient conditions for a Cantor complete
second-countable metric space to be sequential. In particular, we prove that
a Cantor complete second-countable metric space $\mathbf{Y}$ is sequential
if and only if $\mathbf{PCAC}(\mathbf{Y},scl)$ holds. We show that it is
relatively consistent with $\mathbf{ZF}$ that there exists a compact
metrizable space $\mathbf{Y}$ which satisfies $\mathbf{PCAC}(\mathbf{Y},scl)$
and does not satisfy $\mathbf{CAC}(\mathbf{Y},scl)$. We formulate several
other sentences relevant to sequentiality and prove that they are independent
of $\mathbf{ZF}$.

By a deep analysis of Problem 4.5.9 of \cite{En}, as well as of the proofs
to Theorems 2.3.26 and 3.2.2 given in \cite{En}, we deduce the following
lemma:

\begin{lemma}
\label{l4.1} It is provable in $\mathbf{ZF}$ that every non-empty
second-countable compact Hausdorff space is a continuous image of the Cantor
cube $\{0,1\}^{\omega}$.
\end{lemma}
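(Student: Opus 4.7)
The plan is to build a continuous surjection $\{0,1\}^{\omega} \to X$ in three canonical stages, all performed relative to a fixed countable base $\mathcal{B} = \{B_n : n \in \omega\}$ of $\mathbf{X}$.

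The first stage embeds $\mathbf{X}$ as a closed subspace of the Hilbert cube. Since $\mathbf{X}$ is compact Hausdorff, it is normal. Let $P = \{(n,m) \in \omega \times \omega : \text{cl}_{\mathbf{X}}(B_n) \subseteq B_m\}$, a canonically enumerable countable set. For each $(n,m) \in P$, I would construct a specific Urysohn function $f_{n,m}: \mathbf{X} \to [0,1]$ equal to $1$ on $\text{cl}_{\mathbf{X}}(B_n)$ and $0$ outside $B_m$. The usual proof of Urysohn's lemma recursively chooses, for each dyadic rational $r \in (0,1)$, an open set $U_r$ sandwiched between the closed approximations already constructed at levels straddling $r$; in $\mathbf{ZF}$, instead of appealing to countable choice, I would let $U_r$ be the union of the first finite subfamily of $\mathcal{B}$ (in the canonical enumeration of finite subsets) whose union has closure fitting between the prescribed sets, giving an entirely canonical recipe. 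Regularity of $\mathbf{X}$ then implies that the family $(f_{n,m})_{(n,m) \in P}$ separates points from closed sets, so the evaluation map $e: \mathbf{X} \to [0,1]^{P}$ is a homeomorphic embedding; since $\mathbf{X}$ is compact, $e(X)$ is closed. Composing with any fixed bijection $P \to \omega$ identifies $e$ with a closed embedding into $[0,1]^{\omega}$.

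In the second stage, using a fixed bijection $\omega \to \omega \times \omega$, the dyadic-expansion map $\varphi : \{0,1\}^{\omega} \to [0,1]^{\omega}$ defined by $\varphi(x)(k) = \sum_{n \in \omega} x_{k,n}/2^{n+1}$ is a continuous surjection, so $K = \varphi^{-1}(e(X))$ is a non-empty closed subset of $\{0,1\}^{\omega}$ and $\varphi$ restricts to a surjection $K \to e(X)$. In the third stage, I would produce a canonical continuous surjection $r: \{0,1\}^{\omega} \to K$ by the standard tree retraction: let $T$ be the tree of finite initial segments of elements of $K$, and define $r(x)$ coordinate-by-coordinate, setting $r(x)(n) = x(n)$ when both possible extensions of the current segment lie in $T$, and otherwise setting $r(x)(n)$ to be the unique admissible value. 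Since $K$ is closed, $r(x) \in K$; $r$ fixes $K$ pointwise, so it is surjective; and $r(x)(n)$ depends only on $x(0), \ldots, x(n)$, yielding continuity. The composite $e^{-1} \circ \varphi \circ r : \{0,1\}^{\omega} \to X$ is the required continuous surjection.

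The principal obstacle is the first stage: one must verify that the recursive construction of the Urysohn functions can be executed in $\mathbf{ZF}$ entirely from the fixed enumeration of $\mathcal{B}$, with no hidden appeal to choice. This is precisely the careful reading of the proofs in \cite{En} to which the paragraph preceding the lemma alludes; once stage one is in place, the remaining stages are entirely routine.
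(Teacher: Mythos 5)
Your proposal is correct and follows essentially the same route the paper intends: the paper gives no explicit proof, only the instruction to extract a choice-free argument from Engelking's Theorems 2.3.26 and 3.2.2 (canonical embedding into the Hilbert cube via Urysohn functions indexed by pairs from the fixed countable base) and Problem 4.5.9 (the dyadic surjection of $\{0,1\}^{\omega}$ onto $[0,1]^{\omega}$ followed by the tree retraction onto a non-empty closed subset), which is exactly the three-stage construction you describe. You have correctly identified the only delicate point, namely making the recursive Urysohn construction canonical relative to the enumerated base so that no appeal to countable choice is hidden in it.
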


\begin{theorem}
\label{t4.2} The following conditions are all equivalent:

\begin{enumerate}
\item[(i)] every second-countable compact Hausdorff space is sequential;

\item[(ii)] the Hilbert cube $[0,1]^{\omega}$ is sequential;

\item[(iii)] the interval $[0, 1]$ is sequential;

\item[(iv)] the Cantor cube $\{0,1\}^{\omega}$ is sequential;

\item[(v)] $\mathbb{R}$ is sequential.
\end{enumerate}
\end{theorem}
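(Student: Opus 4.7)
I would establish the cycle (i) $\Rightarrow$ (ii) $\Rightarrow$ (iii) $\Rightarrow$ (v) $\Rightarrow$ (iv) $\Rightarrow$ (i). The implication (i) $\Rightarrow$ (ii) is immediate, since $[0,1]^{\omega}$ is compact in $\mathbf{ZF}$ (by Loeb's theorem recalled in the introduction), Hausdorff, and second-countable (as shown at the start of the proof of Theorem~\ref{t2.2}).

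The next three implications all reduce to Proposition~\ref{p3.3}. For (ii) $\Rightarrow$ (iii), the interval $[0,1]$ is homeomorphic to the closed subspace $[0,1]\times\{c\}$ of $[0,1]^{\omega}$, where $c$ is the constant zero point of $[0,1]^{\omega\setminus\{0\}}$. For (iii) $\Rightarrow$ (v), the real line $\mathbb{R}$ is homeomorphic to the open subspace $(0,1)$ of $[0,1]$. For (v) $\Rightarrow$ (iv), the Cantor cube $\{0,1\}^{\omega}$ is homeomorphic to the middle-thirds Cantor set $C\subseteq\mathbb{R}$ via the explicit map $\varphi(x)=\sum_{n\in\omega} 2x(n)/3^{n+1}$. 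This map is continuous (the series converges uniformly in $x$) and injective (ternary expansions with digits in $\{0,2\}$ are unique), and it is a continuous injection from the $\mathbf{ZF}$-compact space $\{0,1\}^{\omega}$ into the Hausdorff space $\mathbb{R}$, hence a homeomorphism onto its image; moreover $C$ is closed in $\mathbb{R}$. In each case Proposition~\ref{p3.3} delivers sequentiality of the subspace and therefore of the homeomorphic space under consideration.

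The crucial step is (iv) $\Rightarrow$ (i). Given a non-empty second-countable compact Hausdorff space $\mathbf{Y}$, Lemma~\ref{l4.1} supplies a continuous surjection $f:\{0,1\}^{\omega}\to\mathbf{Y}$. Since $\{0,1\}^{\omega}$ is compact and $\mathbf{Y}$ is Hausdorff, $f$ is a closed map by the standard $\mathbf{ZF}$ argument that continuous images of closed subsets of compact spaces are compact, and compact subsets of Hausdorff spaces are closed. By Corollary~\ref{c3.2}, $\mathbf{Y}$ is sequential, closing the cycle.

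The obstacle here is conceptual rather than computational: the argument rests entirely on the $\mathbf{ZF}$-availability of Lemma~\ref{l4.1} and on the $\mathbf{ZF}$-compactness of $\{0,1\}^{\omega}$ and $[0,1]^{\omega}$, both traceable to Loeb. Once those are in hand, no fragment of choice is needed in the cycle because every map used is written down explicitly (the coordinate embeddings, the homeomorphism $\mathbb{R}\cong(0,1)$, and the Cantor map $\varphi$), and each subspace appealed to in Proposition~\ref{p3.3} is manifestly open or closed.
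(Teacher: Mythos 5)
Your proposal is correct and follows essentially the same route as the paper: the cycle (i)$\rightarrow$(ii)$\rightarrow$(iii)$\rightarrow$(v)$\rightarrow$(iv)$\rightarrow$(i), with the Cantor ternary map identifying $\{0,1\}^{\omega}$ with a closed subspace of $\mathbb{R}$, Proposition~\ref{p3.3} for the subspace steps, and Lemma~\ref{l4.1} together with the quotient/closed-map principle (Proposition~\ref{p3.1}, equivalently Corollary~\ref{c3.2}) for (iv)$\rightarrow$(i). The only cosmetic difference is that you spell out details (the closed embedding of $[0,1]$ into the Hilbert cube, the ZF argument that the Cantor map is a closed embedding) that the paper leaves as obvious.
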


\begin{proof} The implications (i)$\rightarrow$(ii)$\rightarrow$(iii) are obvious.  Let $C$ denote the Cantor ternary set. It is well known that $\mathbf{C}$, as a closed bounded subspace of $\mathbb{R}$, is compact, while the function $H:\{0,1\}^{\omega }\rightarrow \mathbf{C}$ given by: 
\[
H(f)=\sum\limits_{n\in \omega }\frac{2f(n)}{3^{n+1}} 
\]%
is a homeomorphism. If $[0,1]$ or $\mathbb{R}$ is sequential, so is $\mathbf{C}$ by Proposition \ref{p3.3}. Hence (iv) follows from (iii) and from (v). That (iv) implies (i) follows from Lemma \ref{l4.1} and Proposition \ref{p3.1}. Finally, since $\mathbb{R}$ is homeomorphic with the open subspace $(0, 1)$ of $[0,1]$, we deduce from Proposition \ref{p3.3} that (iii) implies (v).
\end{proof}

The following corollary of Theorem \ref{t4.2} shows that we have just
obtained a new proof to our Corollary \ref{c3.14} and to Theorem 3.5 of \cite%
{kw} which, together with Theorem 3.6 of \cite{kw}, gives solutions to
Problems 6.6 and 6.11 of \cite{OPWZ}:

\begin{corollary}
\label{c4.3} For every $n\in\mathbb{N}$, the following are equivalent:

\begin{enumerate}
\item[(i)] $\mathbb{R}$ is sequential;

\item[(ii)] $[0,1]^{n}$ is sequential;

\item[(iii)] $(0,1)^{n}$ is sequential,

\item[(iv)] $\mathbb{R}^n$ is sequential.
\end{enumerate}
\end{corollary}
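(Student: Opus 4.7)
The plan is to chase the cycle $(\mathrm{i})\Rightarrow(\mathrm{ii})\Rightarrow(\mathrm{iii})\Rightarrow(\mathrm{iv})\Rightarrow(\mathrm{i})$, assembling the tools the paper has already put in place. The key observation is that both $[0,1]$ and $\mathbb{R}$ are locally compact Hausdorff spaces, so Corollary \ref{c3.13} lets us promote sequentiality from one factor to any finite Cartesian power. Combined with Proposition \ref{p3.3} (sequentiality passes to open and closed subspaces), Proposition \ref{p3.4} (factors of a sequential product of two non-empty spaces are sequential), and Theorem \ref{t4.2} (sequentiality of $\mathbb{R}$ is equivalent to sequentiality of $[0,1]$), all four conditions collapse to the same $\mathbf{ZF}$-statement. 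Note that each of these results is already proved in $\mathbf{ZF}$, so the whole argument remains choice-free.

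For $(\mathrm{i})\Rightarrow(\mathrm{ii})$: by Theorem \ref{t4.2}, sequentiality of $\mathbb{R}$ entails sequentiality of $[0,1]$; then, since $[0,1]$ is locally compact and Hausdorff, Corollary \ref{c3.13} applied to the $n$-fold product $\{[0,1]:i\in n\}$ gives sequentiality of $[0,1]^n$. For $(\mathrm{ii})\Rightarrow(\mathrm{iii})$: the cube $(0,1)^n$ is an open subspace of $[0,1]^n$, so Proposition \ref{p3.3} yields that it is sequential. For $(\mathrm{iii})\Rightarrow(\mathrm{iv})$: I would fix in $\mathbf{ZF}$ an explicit homeomorphism $\varphi:(0,1)\to\mathbb{R}$ (for instance $\varphi(x)=\tan(\pi(x-\tfrac{1}{2}))$) and use its coordinatewise extension $\varphi^n:(0,1)^n\to\mathbb{R}^n$, which is a homeomorphism because only finitely many coordinate functions are involved; sequentiality is a topological invariant, so $\mathbb{R}^n$ is sequential.

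For the closing implication $(\mathrm{iv})\Rightarrow(\mathrm{i})$, I would write $\mathbb{R}^n\cong\mathbb{R}\times\mathbb{R}^{n-1}$ (both factors non-empty) and invoke Proposition \ref{p3.4} to conclude that $\mathbb{R}$ is sequential. When $n=1$ the cycle degenerates but still works, because the implications $[0,1]$ sequential $\Leftrightarrow$ $\mathbb{R}$ sequential and $(0,1)$ sequential $\Leftrightarrow$ $\mathbb{R}$ sequential are exactly what Theorem \ref{t4.2} (combined with Proposition \ref{p3.3}) already supplies.

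There is no serious obstacle: everything is a straightforward assembly of the results just proved. The only thing one must guard against is a hidden use of choice, but this is avoided because the only products involved are finite, and the homeomorphism $(0,1)^n\cong\mathbb{R}^n$ is produced from a single explicitly chosen map $\varphi$ rather than from an unspecified family.
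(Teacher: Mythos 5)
Your proof is correct, and every step is indeed choice-free, but it takes a genuinely different route from the paper's at the one step where a route matters, namely (i)$\Rightarrow$(ii). You obtain the sequentiality of $[0,1]^{n}$ from Corollary \ref{c3.13}, i.e.\ from the finite-product machinery of Section 3 (Whitehead's theorem plus the $\omega+1$ trick behind Theorem \ref{t3.12}). The paper instead deduces from Theorem \ref{t4.2} that the Hilbert cube $[0,1]^{\omega}$ is sequential and then observes that $[0,1]^{n}$ is homeomorphic to a closed subspace of $[0,1]^{\omega}$, so Proposition \ref{p3.3} applies. This is deliberate: the surrounding text presents Corollary \ref{c4.3} as a \emph{new} proof of Corollary \ref{c3.14}, one that goes through Lemma \ref{l4.1} and the Cantor cube rather than through Corollary \ref{c3.13}; by invoking Corollary \ref{c3.13} you essentially reproduce the Section 3 derivation and forfeit that independence, though you gain a slightly more self-contained argument (no need for Lemma \ref{l4.1} or the infinite product). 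The remaining implications are the same ingredients in both treatments --- Proposition \ref{p3.3} for (ii)$\Rightarrow$(iii), an explicit homeomorphism $(0,1)\cong\mathbb{R}$, and Proposition \ref{p3.4} to drop from a product to a factor --- merely arranged differently around the cycle: the paper closes the loop as (iii)$\Rightarrow$(i) and leaves (iv) to Corollary \ref{c3.14}, whereas you route (iii)$\Rightarrow$(iv)$\Rightarrow$(i). Your care in noting that the coordinatewise homeomorphism $(0,1)^{n}\cong\mathbb{R}^{n}$ involves only finitely many explicitly given maps, and that Proposition \ref{p3.4} is only ever applied to two factors (the infinitary version being equivalent to $\mathbf{AC}$ by Proposition \ref{r3.5}), is exactly the right kind of vigilance for a $\mathbf{ZF}$ argument.
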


\begin{proof} Let $n\in\mathbb{N}$. If (i) holds, then, by Theorem \ref{t4.2}, the space $[0,1]^{\omega}$ is sequential, so $[0,1]^{n}$ is sequential as homeomorphic with a  closed subspace of $[0,1]^{\omega}$. Hence (i) implies (ii). That (ii) implies (iii) follows from Proposition \ref{p3.3}. Of course, if (iii) holds, then $(0,1)$ is sequential, so $\mathbb{R}$, being homeomorphic with $(0,1)$, is also sequential.
\end{proof}

In what follows, for a metric space $\mathbf{X}=\langle X,d_X\rangle$ and
for each $n\in\omega$, we put $X_n=X$ and $d_n=d_X$. Then $d$ is the metric
on $\mathbf{X}^{\omega}$ defined by  (\ref{4}). The following lemma is applied
to the proof of our Theorem \ref{t4.6} which is a generalization and an
extension of Theorem 4.55 of \cite{herl} and of Theorem 3.6 of \cite{kw}:

\begin{lemma}
\label{l4.4} If $\mathbf{X}$ is Cantor completely metrizable
second-countable space, then every closed subspace of $\mathbf{X}$ is
separable.
\end{lemma}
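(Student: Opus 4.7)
The plan is to adapt the Loeb-function construction from the proof of Theorem \ref{t2.2} to extract, for every sufficiently small basic open set meeting the closed subspace $F$, a canonical point of $F$ in that set. Fix a Cantor complete metric $d$ inducing the topology of $\mathbf{X}$, and fix a well-ordering of a countable base $\mathcal{B}$ of $\mathbf{X}$ (which exists since $\mathcal{B}$ is countable). Let $F$ be a non-empty closed subset of $\mathbf{X}$; write $\mathcal{B}_F = \{B \in \mathcal{B} : B \cap F \neq \emptyset\}$. The aim is to define, canonically and in $\mathbf{ZF}$, a point $x_B \in B \cap F$ for every $B \in \mathcal{B}_F$, and then verify that $\{x_B : B \in \mathcal{B}_F\}$ is countable and dense in $F$.

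Fix $B \in \mathcal{B}_F$. Using the fixed well-ordering of $\mathcal{B}$, let $B_1(B)$ be the first element $B' \in \mathcal{B}$ such that $\text{cl}_{\mathbf{X}}(B') \subseteq B$, $B' \cap F \neq \emptyset$, and $\delta_d(B') < 1$. Given $B_n(B) \in \mathcal{B}$ with $B_n(B) \cap F \neq \emptyset$, let $B_{n+1}(B)$ be the first element $B' \in \mathcal{B}$ with $\text{cl}_{\mathbf{X}}(B') \subseteq B_n(B)$, $B' \cap F \neq \emptyset$, and $\delta_d(B') < \frac{1}{n+1}$. The existence of such a $B'$ at each step is routine: pick any point of $B_n(B) \cap F$, use regularity of the metric topology and the basis property of $\mathcal{B}$ to produce a small enough basic neighbourhood (this is an existence claim, not a choice). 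The sequence $(\text{cl}_{\mathbf{X}}(B_n(B)) \cap F)_{n \in \mathbb{N}}$ is a decreasing chain of non-empty closed subsets of $\mathbf{X}$ with diameters tending to $0$, so Cantor completeness yields a unique point in its intersection; call it $x_B$. Then $x_B \in F$ and $x_B \in \text{cl}_{\mathbf{X}}(B_1(B)) \subseteq B$, so $x_B \in B \cap F$ as required.

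Set $D = \{x_B : B \in \mathcal{B}_F\}$. Since the map $B \mapsto x_B$ has countable domain $\mathcal{B}_F \subseteq \mathcal{B}$, the set $D$ is countable. For density, given $y \in F$ and an open neighbourhood $V$ of $y$ in $\mathbf{X}$, pick $B \in \mathcal{B}$ with $y \in B \subseteq V$; then $B \in \mathcal{B}_F$ and $x_B \in B \cap F \subseteq V \cap F$. Hence $D$ is dense in $F$, and $F$ is separable.

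The only real obstacle is making the recursive selection of $B_n(B)$ rigorous without appealing to choice, and the fixed well-ordering of the countable base $\mathcal{B}$ combined with Cantor completeness is precisely what handles it: at every stage one specifies the \emph{first} admissible basis element, and at the limit one uses Cantor completeness of $d$ rather than a choice function to obtain the witnessing point in $F$.
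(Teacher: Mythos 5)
Your proposal is correct and follows essentially the same route as the paper's own proof: for each basic open set meeting the closed set, you use the fixed enumeration of the countable base to build a canonical shrinking nested sequence of basic sets with diameters tending to $0$, and then invoke Cantor completeness to extract a single point, obtaining a countable dense subset. The only difference is cosmetic — you make the ``first admissible element of the well-ordered base'' mechanism explicit and arrange the nesting via closures so that the selected point visibly lands in the original basic set, whereas the paper states the recursion more tersely.
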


\begin{proof} Let us fix a countable base $\mathcal{B}=\{B_n: n\in\omega\}$ of a second-countable Cantor completely metrizable space  $\mathbf{X}$ and a Cantor complete metric $\rho$ inducing the topology of $\mathbf{X}$. Let $A$ be a non-empty closed set in $\mathbf{X}$ and let $N(A)=\{n\in\omega: A\cap B_n\neq\emptyset\}$. Using $\mathcal{B}$, for each $n\in N(A)$, we can define in $\mathbf{ZF}$ a sequence $(A(n,k))_{k\in\mathbb{N}}$ of members of $\mathcal{B}$, such that $\emptyset\neq A\cap A(n,k+1)\subseteq A\cap A(n,k)\subseteq B_n $ and $\delta_{\rho}(A(n,k))<\frac{1}{k}$ for each $k\in\mathbb{N}$. Since $\rho$ is Cantor complete, we obtain a sequence $(a_n)_{n\in\mathbb{N}}$ where $a_n\in A\cap\bigcap_{n\in\mathbb{N}}\text{cl}_{\mathbf{X}}A(n,k)$. The set $\{a_n: n\in\mathbb{N}\}$ is countable and dense in $A$.
\end{proof}

\begin{definition}
\label{d4.5} If $\mathcal{Y}=\{\mathbf{Y}_{j}:j\in J\}$ is a collection of
topological or metric spaces, then we say that $\mathcal{Y}$ \textit{has a
choice function} if the collection $\{Y_{j}:j\in J\}$ of the underlying sets
has a choice function.
\end{definition}

\begin{theorem}
\label{t4.6} For every Cantor complete second-countable metric space $%
\mathbf{Y}=\langle Y, d_Y\rangle$, the following conditions are all
equivalent:

\begin{enumerate}
\item[(i)] $\mathbf{Y}$ is sequential;

\item[(ii)] every non-empty family of non-empty complete metric subspaces of 
$\mathbf{Y}$ has a choice function;

\item[(iii)] every non-empty countable family of non-empty complete metric
subspaces of $\mathbf{Y}$ has a choice function;

\item[(iv)] all complete metric subspaces of $\mathbf{Y}$ are closed in $%
\mathbf{Y}$;

\item[(v)] every sequentially closed subspace of $\mathbf{Y}$ is separable.

\item[(vi)] $\mathbf{s}-\mathbf{Loeb}(\mathbf{Y})$;

\item[(vii)] $\mathbf{CAC}(\mathbf{Y}, scl)$;

\item[(viii)] $\mathbf{PCAC}(\mathbf{Y}, scl)$.
\end{enumerate}
\end{theorem}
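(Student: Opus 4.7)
The plan is to organize the eight conditions around the bridge observation that a subset $A\subseteq Y$ is sequentially closed in $\mathbf{Y}$ if and only if $\langle A,d_Y|_{A\times A}\rangle$ is a complete metric space. The reverse direction is immediate: a sequence in $A$ convergent in $\mathbf{Y}$ is Cauchy in $A$, so completeness of $A$ places its limit in $A$. For the forward direction one first argues in $\mathbf{ZF}$ that Cantor completeness of $\mathbf{Y}$ implies ordinary completeness (given a Cauchy sequence $(y_n)$, the closed tails $\text{cl}_{\mathbf{Y}}(\{y_k:k\geq n\})$ form a decreasing sequence of non-empty closed sets of diameter tending to $0$, so Cantor completeness supplies a unique point in the intersection); a Cauchy sequence in $A$ is then Cauchy in $\mathbf{Y}$, and its limit lies in $A$ by sequential closedness. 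This identification gives at once (i)$\Leftrightarrow$(iv), (ii)$\Leftrightarrow$(vi), and (iii)$\Leftrightarrow$(vii).

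Next I would prove (i)$\Rightarrow$(v)$\Rightarrow$(i). For (i)$\Rightarrow$(v): under (i) every sequentially closed subspace equals its closure, and Lemma \ref{l4.4} shows every closed subspace of $\mathbf{Y}$ is separable. For (v)$\Rightarrow$(i): let $A$ be sequentially closed and $x\in\text{cl}_{\mathbf{Y}}(A)$. By (v) pick a countable dense $D\subseteq A$ and fix an enumeration $D=\{d_k:k\in\omega\}$. Since $D$ is dense in $A$ and $x\in\text{cl}_{\mathbf{Y}}(A)$, one has $x\in\text{cl}_{\mathbf{Y}}(D)$, so for each $n\in\mathbb{N}$ the integer $k_n=\min\{k\in\omega:d_Y(x,d_k)<1/n\}$ is well-defined. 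Then $(d_{k_n})_{n\in\mathbb{N}}$ is a sequence in $A$ converging in $\mathbf{Y}$ to $x$, and sequential closedness of $A$ gives $x\in A$.

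To close the large loop I would observe that $\mathbf{Y}$ is itself Loeb: Theorem \ref{t2.2} shows $\mathbf{Y}^{\omega}$ is Loeb, and the diagonal embedding realizes $\mathbf{Y}$ as a closed subspace of $\mathbf{Y}^{\omega}$, so Proposition \ref{p2.1}(i) applies. Assuming (iv), every complete metric subspace of $\mathbf{Y}$ is closed, and composing a Loeb function of $\mathbf{Y}$ with any family of non-empty complete metric subspaces produces the choice function demanded by (ii). The trivial restrictions (ii)$\Rightarrow$(iii) and (vii)$\Rightarrow$(viii), together with the equivalences already obtained, reduce the remaining task to (viii)$\Rightarrow$(i). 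This is the delicate step and the main obstacle; I would discharge it by invoking Theorem \ref{t3.15}(b) directly, since every metric space is first-countable and regular, so $\mathbf{PCAC}(\mathbf{Y},scl)$ alone forces $\mathbf{Y}$ to be sequential. Chaining all arrows together completes the proof.
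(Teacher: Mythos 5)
Your argument is correct, every step is available in $\mathbf{ZF}$ as written, and it uses the same three ingredients as the paper (Loebness of $\mathbf{Y}$, separability of closed subspaces via Lemma \ref{l4.4}, and the ``closed balls about a closure point'' trick), but the decomposition is genuinely different and arguably cleaner. You promote the identification ``$A$ is sequentially closed in $\mathbf{Y}$ if and only if the metric subspace $\mathbf{A}$ is complete'' to the organizing principle, which collapses the eight conditions into the pairs (i)$\Leftrightarrow$(iv), (ii)$\Leftrightarrow$(vi), (iii)$\Leftrightarrow$(vii) at essentially no cost; the paper records this observation only afterwards (Remark \ref{r4.11}) and instead derives (ii), (iii), (vi), (vii), (viii) from (i) using Loebness, then closes the loop via (iii)$\Rightarrow$(iv)$\Rightarrow$(v)$\Rightarrow$(i) and (viii)$\Rightarrow$(i), running the closed-ball argument twice. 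Your appeal to Theorem \ref{t3.15}(b) for (viii)$\Rightarrow$(i) is legitimate (metric spaces are first-countable and regular) and lets you run that argument only once, and your resulting cycle (i)$\Rightarrow$(iv)$\Rightarrow$(ii)$\Rightarrow$(iii)$\Rightarrow$(vii)$\Rightarrow$(viii)$\Rightarrow$(i), with (v) and (vi) attached by biconditionals, does cover all eight conditions. Two small remarks: your detour through the diagonal of $\mathbf{Y}^{\omega}$ to see that $\mathbf{Y}$ is Loeb works (the diagonal is closed since $\mathbf{Y}$ is Hausdorff), though the proof of Theorem \ref{t2.2} applies verbatim to $\mathbf{Y}$ itself; and you were right to spell out that Cantor completeness gives ordinary completeness in $\mathbf{ZF}$ via the closures of the tails of a Cauchy sequence, since that is exactly what makes your bridge equivalence work in both directions without any choice.
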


\begin{proof} Since every complete subspace of $\mathbf{Y}$ is sequentially closed in $\mathbf{Y}$, while $\mathbf{Y}$ is Loeb by Theorem \ref{t2.2}, we infer that conditions (ii), (iii), (vi), (vii) and (viii) follow from (i). It is obvious that (iii) follows from (ii). To prove that (iv) follows from (iii), we assume that $\mathbf{Z}$ is a complete metric subspace of $\mathbf{Y}$. Suppose that $y\in\text{cl}_{\mathbf{Y}}(Z)\setminus Z$. For each $n\in\mathbb{N}$, the metric subspace of $\mathbf{Y}$ whose underlying set is $\overline{B}_{d_Y}(y, \frac{1}{n})\cap Z$ is complete and non-empty. Assuming that (iii) holds, we fix $f\in\prod_{n\in\mathbb{N}}(\overline{B}_{d_Y}(y, \frac{1}{n})\cap Z)$. The sequence $(f(n))_{n\in\mathbb{N}}$ is a sequence of points of $Z$ which converges in $\mathbf{Y}$ to $y$. Since $\mathbf{Z}$ is complete, we obtain that $y\in Z$, so $Z$ is closed in $\mathbf{Y}$. Hence (iii) implies (iv). Now, let $\mathbf{A}$ be a sequentially closed metric subspace of $\mathbf{Y}$. Then $\mathbf{A}$ is a complete metric subspace of $\mathbf{Y}$. If (iv) holds, then the set $A$ is closed in $\mathbf{Y}$ and, in view of Lemma \ref{l4.4}, $\mathbf{A}$ is separable. Hence (iv) implies (v). It is obvious that (v) implies (i). It is also obvious that (vi) implies (vii) and (vii) implies (viii). To prove that (viii) implies (i), we assume (viii) and consider a sequentially closed subset $A$ of $\mathbf{Y}$. We show that $A$ is closed. Fix $x\in\text{cl}_{\mathbf{Y}}(A)$. For each $n\in\mathbb{N}$, let $G_{n}=\overline{B}_{d_y}(x,\frac{1}{n})\cap A$. Clearly, $\mathcal{G}=\{G_n: n\in\mathbb{N}\}$ is a family of non-empty sequentially closed subsets of $\mathbf{Y}$. Any partial choice function of $\mathcal{G}$ is a sequence of points of $A$ converging to $x$. Thus, $x\in A$, so $A$ is closed in $\mathbf{Y}$ as required.
\end{proof}

The following corollaries follow directly from Theorems \ref{t2.2} and \ref%
{t4.6}:

\begin{corollary}
\label{c4.7} For every Cantor complete second-countable metric space $%
\mathbf{X}=\langle X, d_X\rangle$ and for $\mathbf{Y}=\mathbf{X}^{\omega}$,
conditions (i)-(viii) of Theorem \ref{t4.6} are all equivalent.
\end{corollary}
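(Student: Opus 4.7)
The plan is essentially to observe that $\mathbf{Y} = \mathbf{X}^{\omega}$ itself satisfies the hypothesis of Theorem \ref{t4.6}, so that the corollary is a one-line consequence of that theorem once the hypothesis is verified. No new combinatorial work is needed; only a packaging of ingredients already assembled in the proof of Theorem \ref{t2.2}.

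First I would verify that $\mathbf{Y}=\mathbf{X}^{\omega}$ is a Cantor complete second-countable metric space when equipped with the metric $d$ given by (\ref{4}). The Cantor completeness of $d$ is precisely the $\mathbf{ZF}$-result from \cite{kyr2} that was invoked in the proof of Theorem \ref{t2.2}. Second-countability of $\mathbf{Y}$ is the explicit construction carried out in the same proof: starting from a fixed countable base $\mathcal{B}$ of $\mathbf{X}$, the families $\mathcal{A}_n=\{U\times X^{\omega\setminus n}: U\in\mathcal{B}^{n}\}$ are each injectible into $\omega$, and their union $\mathcal{A}=\bigcup_{n\in\mathbb{N}}\mathcal{A}_n$ is a countable open base of $\mathbf{Y}$.

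Having established that $\mathbf{Y}$ is Cantor complete, second-countable and metric, I would simply apply Theorem \ref{t4.6} with $\mathbf{Y}$ in the role of the space there. Since all eight conditions (i)--(viii) of Theorem \ref{t4.6} are equivalent for any such space, they are in particular equivalent for $\mathbf{Y}=\mathbf{X}^{\omega}$, which is the content of Corollary \ref{c4.7}.

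There is no real obstacle here; the only point requiring care is that the metric on $\mathbf{Y}$ referenced implicitly by conditions (ii)--(iv) and (vi)--(viii) is precisely the metric $d$ of (\ref{4}), under which $\mathbf{Y}$ is Cantor complete, so that \textquotedblleft complete metric subspaces\textquotedblright\ and \textquotedblleft sequentially closed subsets\textquotedblright\ are interpreted consistently between Theorem \ref{t4.6} and its application here. Theorem \ref{t2.2} could be cited as well, but it is not strictly needed: Theorem \ref{t4.6}(i)$\Leftrightarrow$(vi) already subsumes the Loeb property of $\mathbf{Y}$ within the equivalence, and the citation of Theorem \ref{t2.2} in the corollary's statement merely reminds the reader where the Cantor completeness and second-countability of $\mathbf{Y}$ come from.
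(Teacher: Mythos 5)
Your proposal is correct and matches the paper's route: the paper states that Corollary \ref{c4.7} follows directly from Theorems \ref{t2.2} and \ref{t4.6}, which is exactly your argument of checking that $\mathbf{X}^{\omega}$ with the metric $d$ of (\ref{4}) is Cantor complete (via \cite{kyr2}) and second-countable (via the explicitly indexed base $\mathcal{A}=\bigcup_{n\in\mathbb{N}}\mathcal{A}_n$), and then applying Theorem \ref{t4.6} to it. Your remark that the citation of Theorem \ref{t2.2} is not strictly necessary because (i)$\Leftrightarrow$(vi) of Theorem \ref{t4.6} already covers the Loeb property is a fair observation but does not change the substance.
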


\begin{corollary}
\label{c4.8} (a) For every $m\in\mathbb{N}\cup\{\omega\}$, if $\mathbf{Y}%
=[0, 1]^{m}$, then conditions (i)-(viii) of Theorem \ref{t4.6} are all
equivalent with $\mathbf{S}(\mathbb{R})$.\newline
(b) For every $m\in\mathbb{N}\cup\{\omega\}$, if $\mathbf{Y}=\mathbb{R}^{m}$%
, then conditions (i)-(viii) of Theorem \ref{t4.6} are all equivalent.
\end{corollary}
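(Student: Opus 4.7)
The plan is to deduce both parts of Corollary \ref{c4.8} as essentially direct applications of Theorem \ref{t4.6} to the appropriate spaces, combined (for part (a)) with the equivalences already established in Theorem \ref{t4.2}.

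First I would verify the hypotheses of Theorem \ref{t4.6} for $\mathbf{Y}=[0,1]^m$ and $\mathbf{Y}=\mathbb{R}^m$ when $m\in\mathbb{N}\cup\{\omega\}$. Both $[0,1]$ with its usual metric and $\mathbb{R}$ with the standard absolute value are Cantor complete second-countable metric spaces; for $[0,1]$ this is immediate from compactness, and for $\mathbb{R}$ it follows from the fact that closed bounded subsets of $\mathbb{R}$ are compact in $\mathbf{ZF}$. For finite $m$, a straightforward induction shows that the Euclidean (product) metric on $[0,1]^m$ or $\mathbb{R}^m$ is again Cantor complete and that the product is second-countable. For $m=\omega$, I would invoke the fact, already used in the proof of Theorem \ref{t2.2} and attributed there to \cite{kyr2}, that the metric $d$ defined by formula (\ref{4}) on a countable product of Cantor complete metric spaces is itself Cantor complete; second-countability of the countable product of second-countable spaces was also established in that proof. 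Hence in every case under consideration $\mathbf{Y}$ satisfies the hypotheses of Theorem \ref{t4.6}, and consequently conditions (i)--(viii) of that theorem are mutually equivalent. This already proves part (b).

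For part (a) it remains only to identify condition (i) of Theorem \ref{t4.6} for $\mathbf{Y}=[0,1]^m$, namely $\mathbf{S}([0,1]^m)$, with $\mathbf{S}(\mathbb{R})$. Theorem \ref{t4.2} supplies the equivalences $\mathbf{S}(\mathbb{R})\Leftrightarrow\mathbf{S}([0,1])\Leftrightarrow\mathbf{S}([0,1]^\omega)$, which cover the extreme cases $m=1$ and $m=\omega$. For $1<m<\omega$, I would note that $[0,1]^m$ is homeomorphic with the closed subspace $[0,1]^m\times\{0\}^{\omega\setminus m}$ of $[0,1]^\omega$, whence Proposition \ref{p3.3} gives $\mathbf{S}([0,1]^\omega)\Rightarrow \mathbf{S}([0,1]^m)$, while Proposition \ref{p3.4}, applied to the factorization $[0,1]^m=[0,1]\times[0,1]^{m-1}$, yields $\mathbf{S}([0,1]^m)\Rightarrow \mathbf{S}([0,1])$. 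Chaining these implications closes the cycle through $\mathbf{S}(\mathbb{R})$ and completes part (a).

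The whole argument is essentially bookkeeping. The only ingredient that is not entirely trivial in $\mathbf{ZF}$ is the Cantor completeness of the countable product metric on $[0,1]^\omega$ and $\mathbb{R}^\omega$, but this is precisely what was cited from \cite{kyr2} in the proof of Theorem \ref{t2.2}; beyond that, no substantial obstacle is anticipated.
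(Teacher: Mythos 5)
Your proposal is correct and follows essentially the route the paper intends: the paper states that Corollary \ref{c4.8} ``follows directly from Theorems \ref{t2.2} and \ref{t4.6}'', i.e.\ one checks that $[0,1]^m$ and $\mathbb{R}^m$ (with the product metric of (\ref{4}) when $m=\omega$) are Cantor complete and second-countable, applies Theorem \ref{t4.6}, and for part (a) identifies $\mathbf{S}([0,1]^m)$ with $\mathbf{S}(\mathbb{R})$ via Theorem \ref{t4.2} and Corollary \ref{c4.3}. Your only (harmless) deviation is deriving $\mathbf{S}([0,1]^m)\Rightarrow\mathbf{S}([0,1])$ from Proposition \ref{p3.4} rather than quoting Corollary \ref{c4.3} directly.
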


\begin{corollary}
\label{c4.9} In Cohen's basic model $\mathcal{M}$1 of \cite{hr}, $\mathbf{%
s-Loeb}(\mathbb{R})$ fails.
\end{corollary}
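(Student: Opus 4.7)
The plan is to obtain this corollary as an immediate consequence of Corollary \ref{c4.8}(b) (applied with $m=1$) together with Remark \ref{r2.9}. By Corollary \ref{c4.8}(b), conditions (i)--(viii) of Theorem \ref{t4.6} are all equivalent when $\mathbf{Y} = \mathbb{R}$; in particular, $\mathbf{s-Loeb}(\mathbb{R})$ (condition (vi)) is equivalent to $\mathbf{S}(\mathbb{R})$ (condition (i)). Remark \ref{r2.9} recalls that $\mathbf{S}(\mathbb{R})$ implies $\mathbf{IDI}(\mathbb{R})$, and that $\mathbf{IDI}(\mathbb{R})$ fails in $\mathcal{M}1$; hence $\mathbf{S}(\mathbb{R})$, and therefore $\mathbf{s-Loeb}(\mathbb{R})$, fails in $\mathcal{M}1$.

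If a direct witnessing family is preferred, one can argue as follows. In $\mathcal{M}1$ the set $A$ of added Cohen reals is an infinite Dedekind-finite dense subset of $\mathbb{R}$, and $A$ is sequentially closed in $\mathbb{R}$ because any non-eventually-constant convergent sequence of points of $A$ would have a countably infinite range, contradicting the Dedekind-finiteness of $A$. The countable family $\{[p,q]\cap A : p,q\in\mathbb{Q},\ p<q\}$ then consists of non-empty sequentially closed subsets of $\mathbb{R}$, and admits no choice function: restricting any such function to intervals with pairwise disjoint interiors would produce infinitely many distinct elements of $A$.

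No substantial obstacle is expected, since both routes rely only on material already developed in the paper and on standard facts about $\mathcal{M}1$ recorded in \cite{hr}. The sole verification required along the direct route is the sequential closedness of $A$, which is handled by the Dedekind-finiteness argument sketched above.
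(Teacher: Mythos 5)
Your first route is essentially the paper's own proof: the paper likewise derives the corollary from Corollary \ref{c4.8} together with the failure of sequentiality of $\mathbb{R}$ in $\mathcal{M}1$, the only difference being which known fact about $\mathcal{M}1$ is cited to get non-sequentiality (the paper uses that the set $A$ of added Cohen reals is sequentially closed but not closed, while you use Remark \ref{r2.9} and the failure of $\mathbf{IDI}(\mathbb{R})$; both are facts about $\mathcal{M}1$ already recorded in the paper). Your second route is genuinely different and arguably cleaner: exhibiting $\{[p,q]\cap A : p,q\in\mathbb{Q},\ p<q\}$ as a countable family of non-empty sequentially closed subsets of $\mathbb{R}$ with no choice function refutes $\mathbf{s-Loeb}(\mathbb{R})$ directly, with no appeal to Theorem \ref{t4.6} or Corollary \ref{c4.8} at all; it recycles the family the paper already uses in Section 2 to show that the subspace $\mathbf{A}$ is not Loeb, and your verification that $A$ is sequentially closed is correct, since a convergent non-eventually-constant sequence of points of $A$ would have a countably infinite, hence Dedekind-infinite, range. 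Both routes are sound; the second buys independence from the machinery of Section 4 at the cost of redoing a small amount of work about $\mathcal{M}1$ by hand.
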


\begin{proof} It is known that, in the model $\mathcal{M}$1 of \cite{hr}, the set $A$ of all added Cohen reals is sequentially closed but not closed in $\mathbb{R}$. Hence $\mathbb{R}$ is not sequential in $\mathcal{M}$1 and, therefore, in view of Corollary \ref{c4.8}, $\mathbf{s-Loeb}(\mathbb{R})$ fails in $\mathcal{M}$1.
\end{proof}

\begin{remark}
\label{r4.10} Since $\mathbb{R}$ is Loeb, it follows from Corollary \ref%
{c4.9} that, in a model $\mathbf{ZF}$, it may happen for a metrizable space $%
\mathbf{X}$ that $\mathbf{L}(\mathbf{X})$ holds and $\mathbf{s-Loeb}(\mathbf{%
X})$ fails. Clearly, $\mathbf{s-Loeb}(\mathbf{X})$ implies $\mathbf{L}(%
\mathbf{X})$ for every space $\mathbf{X}$, so $\mathbf{s-Loeb}(\mathbf{X})$
is stronger than $\mathbf{L}(\mathbf{X})$.
\end{remark}

\begin{remark}
\label{r4.11} Regarding conditions (i)-(viii) of Theorem \ref{t4.6}, if we
drop the requirement that the space $\mathbf{Y}$ be second-countable, then
one may wonder which of the conditions (ii)-(viii) are equivalent with (i).
Of course, if (v) holds, then $\mathbf{Y}$ is second-countable. We notice
that the implication (vii)$\to$(viii) of Theorem \ref{t4.6} holds for every
space $\mathbf{Y}$. Theorem \ref{t4.14} given below shows that, in a model
of $\mathbf{ZF}$, the implication (viii)$\to$(vii) of Theorem \ref{t4.6}
need not hold even for a compact sequential metrizable space $\mathbf{Y}$.
Now, let us assume that $\mathbf{Y}=\langle Y, d_Y\rangle$ is a complete
metric space. We can easily observe that a set $A\subseteq Y$ is
sequentially closed in $\mathbf{Y}$ if and only if the metric subspace $%
\mathbf{A}$ of $\mathbf{Y}$ is complete. Therefore, conditions (i) and (iv)
of Theorem \ref{t4.6} are equivalent for every complete metric space $%
\mathbf{Y}$. Assuming that $\mathbf{Y}$ is Cantor complete, in much the same
way, as in the proof to Theorem \ref{t4.6} that (viii) implies (i), we can
show that each of conditions (ii), (iii), (vi), (vii) and (viii) implies
that $\mathbf{Y}$ is sequential.
\end{remark}

\begin{theorem}
\label{t4.12} The following conditions are all equivalent:

\begin{enumerate}
\item[(i)] $\mathbf{AC}$;

\item[(ii)] for every Cantor complete sequential metric space $\mathbf{Y}$,
every non-empty family of non-empty complete metric subspaces of $\mathbf{Y}$
has a choice function;

\item[(iii)] for every Cantor complete metric space $\mathbf{Y}$, $\mathbf{s}%
-\mathbf{Loeb}(\mathbf{Y})$ holds.
\end{enumerate}
\end{theorem}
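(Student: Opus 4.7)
The implications (i)$\to$(ii) and (i)$\to$(iii) are immediate, since $\mathbf{AC}$ provides a choice function for every non-empty family of non-empty sets. So the real content is (ii)$\to$(i) and (iii)$\to$(i), and for both my plan is to encode an arbitrary family $\mathcal{E}$ of non-empty sets into a single Cantor complete sequential metric space whose complete (respectively, sequentially closed) subspaces include all members of $\mathcal{E}$.

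The natural candidate is the discrete metric on $X=\bigcup\mathcal{E}$, defined by $d(x,y)=1$ when $x\neq y$ and $d(x,x)=0$. Set $\mathbf{Y}=\langle X,d\rangle$. The space $\mathbf{Y}$ is sequential because every subset of a discrete space is closed, hence trivially sequentially closed. It is Cantor complete because if $(A_n)_{n\in\omega}$ is a decreasing sequence of non-empty closed sets in $\mathbf{Y}$ with $\delta_d(A_n)\to 0$, then eventually $\delta_d(A_n)<1$, which forces $|A_n|=1$ for large $n$, so $\bigcap_{n\in\omega}A_n$ is the common singleton.

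For (ii)$\to$(i), note that every subset of $\mathbf{Y}$ is a complete metric subspace, because in a discrete metric space every Cauchy sequence is eventually constant, hence convergent within any set that contains its tail. Therefore $\mathcal{E}$ itself is a non-empty family of non-empty complete metric subspaces of $\mathbf{Y}$, and applying (ii) to $\mathcal{E}$ yields the desired choice function. For (iii)$\to$(i), every subset of $\mathbf{Y}$ is sequentially closed, so $\mathcal{E}\subseteq SC(\mathbf{Y})\setminus\{\emptyset\}$; the restriction of any s-Loeb function of $\mathbf{Y}$ to $\mathcal{E}$ is a choice function for $\mathcal{E}$.

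There is no real obstacle here beyond the verifications above: the proof works because the discrete topology simultaneously trivializes sequentiality, Cantor completeness, and the distinction between closed, sequentially closed, and complete subspaces. The only thing worth double-checking in $\mathbf{ZF}$ is that both $X=\bigcup\mathcal{E}$ and the assignment of the discrete metric can be formed without choice, which they can, since the metric is definable from equality alone.
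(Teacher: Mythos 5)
Your proof is correct and takes essentially the same route as the paper: both arguments reduce an arbitrary family of non-empty sets to the discrete metric space on its union, which is Cantor complete and sequential and in which every non-empty subset is simultaneously a complete and a sequentially closed subspace. The only cosmetic difference is that you argue the implications (ii)$\to$(i) and (iii)$\to$(i) directly, whereas the paper phrases the same construction contrapositively starting from a family without a choice function.
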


\begin{proof}
Of course, $\mathbf{AC}$ implies (ii) and (iii). Suppose that $\mathbf{AC}$ does not hold. Let $\mathcal{A}=\{A_j:  j\in J\}$ be a collection of non-empty pairwise disjoint sets which does not have a choice function. Let $Y=\bigcup_{j\in J}A_j$ and let $d_Y$ be the discrete metric on $Y$. Then the metric space $\mathbf{Y}=\langle Y, d_Y\rangle$ is Cantor complete and sequential, while $\mathbf{s-Loeb}(\mathbf{Y})$ fails because the collection $\{\mathbf{A}_j: j\in J\}$ of complete and sequentially closed metric subspaces of $\mathbf{Y}$ does not have a choice function.
\end{proof}

Using similar arguments as above, together with the fact that $\mathbf{CAC}$
and $\mathbf{PCAC}$ are equivalent (see Theorem 2.12 in \cite{herl} and Form
8 in \cite{hr}), one can prove the following theorem:

\begin{theorem}
\label{t4.13} The following conditions are all equivalent:

\begin{enumerate}
\item[(i)] $\mathbf{CAC}$;

\item[(ii)] for every Cantor complete sequential metric space $\mathbf{Y}$,
every non-empty countable family of non-empty complete metric subspaces of $%
\mathbf{Y}$ has a choice function;

\item[(iii)] for every Cantor complete metric space $\mathbf{Y}$, $\mathbf{%
CAC}(\mathbf{Y}, scl)$ holds;

\item[(iv)] for every Cantor complete metric space $\mathbf{Y}$, $\mathbf{%
PCAC}(\mathbf{Y}, scl)$ holds.
\end{enumerate}
\end{theorem}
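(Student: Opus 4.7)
The plan is to follow the blueprint of Theorem \ref{t4.12} and reduce everything to the discrete-metric construction, using the $\mathbf{ZF}$-equivalence of $\mathbf{CAC}$ and $\mathbf{PCAC}$ cited in the hint to handle condition (iv).

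First I would dispose of the easy implications. Under $\mathbf{CAC}$, every non-empty countable family of non-empty sets has a choice function, so (i) trivially implies (ii), (iii) and (iv); moreover, every choice function is a partial choice function, so (iii)$\to$(iv) is also trivial. It remains to close the cycle by proving (ii)$\to$(i), (iii)$\to$(i) and (iv)$\to$(i).

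For the converses I would argue by contraposition. Assume $\mathbf{CAC}$ fails. By the equivalence of $\mathbf{CAC}$ and $\mathbf{PCAC}$ (Theorem 2.12 of \cite{herl}, Form 8 in \cite{hr}), $\mathbf{PCAC}$ fails as well, so I may fix a pairwise disjoint countable family $\mathcal{A}=\{A_n:n\in\omega\}$ of non-empty sets admitting no partial choice function (hence no choice function either). Set $Y=\bigcup_{n\in\omega}A_n$ and equip $Y$ with the discrete metric $d_Y$. Then $\mathbf{Y}=\langle Y, d_Y\rangle$ is Cantor complete (any decreasing sequence of non-empty closed sets with diameters tending to $0$ becomes constant at a singleton, whose point serves as a witness) and sequential (every subset of a discrete space is closed, hence sequentially closed). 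Each $A_n$ is closed in $\mathbf{Y}$, so the metric subspace $\mathbf{A}_n$ is simultaneously sequentially closed in $\mathbf{Y}$ and complete in its own right.

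Therefore $\mathcal{A}$ is a countable family of non-empty complete, sequentially closed metric subspaces of the Cantor complete sequential metric space $\mathbf{Y}$ with no choice function, contradicting (ii) and (iii); and $\mathcal{A}$ is a countable family of non-empty sequentially closed subsets of the Cantor complete metric space $\mathbf{Y}$ with no partial choice function, contradicting (iv). The only step requiring real thought is the reduction (iv)$\to$(i): the discrete-metric witness directly refutes $\mathbf{PCAC}(\mathbf{Y},scl)$ rather than $\mathbf{CAC}(\mathbf{Y},scl)$, so invoking $\mathbf{CAC}\Leftrightarrow\mathbf{PCAC}$ at the outset is essential. Everything else mirrors the argument given for Theorem \ref{t4.12}.
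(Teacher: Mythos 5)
Your proposal is correct and is essentially the proof the paper intends: the paper only sketches Theorem \ref{t4.13} by saying to repeat the discrete-metric construction from Theorem \ref{t4.12} and invoke the $\mathbf{ZF}$-equivalence of $\mathbf{CAC}$ and $\mathbf{PCAC}$, which is exactly what you do. Your single witness family without a partial choice function refutes (ii), (iii) and (iv) simultaneously, and all the verifications (Cantor completeness and sequentiality of the discrete space, completeness and sequential closedness of the $A_n$) are accurate.
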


We recall that a set $A$ is \textit{weakly Dedekind-finite} if the power set $\mathcal{P}(A)$ is Dedekind-finite.

\begin{theorem}
\label{t4.14} (a) It is relatively consistent with $\mathbf{ZF}$ the
existence of a sequential countably compact metrizable space $\mathbf{Y} $
which does not satisfy $\mathbf{s-Loeb}(\mathbf{Y})$.\newline
(b) It is independent of $\mathbf{ZF}$ that every simultaneously sequential,
compact and metrizable space $\mathbf{Y}$ satisfies $\mathbf{CAC}(\mathbf{Y}%
,scl)$. In particular, in some models of $\mathbf{ZF}$, the following
statement is false: For every compact metrizable space $\mathbf{Y}$, if $%
\mathbf{PCAC}(\mathbf{Y},scl)$ holds, then $\mathbf{CAC}(\mathbf{Y},scl)$
holds.
\end{theorem}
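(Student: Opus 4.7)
For both parts, the goal is to exhibit, in appropriate ZF-models, explicit (countably) compact metrizable sequential spaces $\mathbf{Y}$ that fail the pertinent choice principles. I would work in a ZF-model $\mathcal{M}$ in which $\mathbf{CAC}_{\omega}$ fails and fix in $\mathcal{M}$ a pairwise disjoint family $\mathcal{A}=\{A_n:n\in\omega\}$ of countably infinite subsets of $\mathbb{R}$ with $A_n\subset(\tfrac{1}{2n},\tfrac{1}{2n-1})$ and with no partial choice function, as in the proof of Theorem \ref{t2.10}(a). Form a metric space $\mathbf{Y}$ from $\mathcal{A}$ together with a distinguished limit point, for instance with underlying set $\{0\}\cup\bigcup\mathcal{A}$ and an appropriate metric. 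Sequentiality at any non-zero point follows the argument of Theorem \ref{t2.10}(a), using only individual countability of each $A_n$; sequentiality at the limit point requires extracting, from any sequentially closed $G\subseteq Y$ accumulating at that limit, a convergent sequence in $G$ via the $\mathbf{ZF}$-Loeb property of $\mathbb{R}$ applied to suitable closed subsets of $\mathbb{R}$. Countable compactness is engineered from the shrinking-interval arrangement, which forces every infinite subset of $\mathbf{Y}$ to cluster either within some $A_n$ or at the distinguished limit. Finally $\mathcal{A}$ is a countable family of non-empty closed subsets of $\mathbf{Y}$ with no choice function, so $\mathbf{Y}$ is not Loeb, and since $\mathbf{Y}$ is sequential, $\mathbf{s}\text{-}\mathbf{Loeb}(\mathbf{Y})$ also fails; this is (a).

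For the first assertion of (b), this same $\mathbf{Y}$ witnesses in $\mathcal{M}$ the failure of ``every sequential compact metrizable space $\mathbf{Y}$ satisfies $\mathbf{CAC}(\mathbf{Y},scl)$,'' which is trivially true under $\mathbf{AC}$; independence from $\mathbf{ZF}$ follows. For the ``in particular'' clause, replace $\mathcal{M}$ by a ZF-model $\mathcal{M}'$ containing a family $\{A_n\}$ of the same type that additionally admits a partial choice on every infinite subfamily of $\{A_n\}$ but no full choice on $\{A_n\}$ itself, as may be engineered via symmetric Cohen extensions or Fraenkel-Mostowski-type permutation models. In $\mathcal{M}'$ form the analogous $\mathbf{Y}$. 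A case analysis of any countable family $\{F_k\}_{k\in\omega}$ of non-empty closed subsets of $\mathbf{Y}$ (splitting on whether infinitely many $F_k$ contain the distinguished limit point and otherwise on the first $A_n$ each $F_k$ meets) reduces $\mathbf{PCAC}(\mathbf{Y},scl)$ to the partial-choice hypothesis on $\{A_n\}$, while $\{A_n\}$ itself continues to witness the failure of $\mathbf{CAC}(\mathbf{Y},scl)$.

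The main obstacles are twofold. First, the simultaneous realisation, in $\mathcal{M}$, of sequentiality and non-Loebness of $\mathbf{Y}$ is delicate: sequentiality at the limit point requires a local form of countable choice, extracted from the $\mathbf{ZF}$-Loeb property of $\mathbb{R}$ on closed subsets of $\mathbb{R}$, while non-Loebness demands that the family $\{A_n\}$ -- whose members are closed in $\mathbf{Y}$ but not in $\mathbb{R}$ -- globally lack a choice function. Second, the metamathematical construction of $\mathcal{M}'$ with exactly the partial-choice behaviour required on a specified family of pairs (or countable sets), together with the combinatorial reduction of arbitrary families of closed subsets of $\mathbf{Y}$ in $\mathcal{M}'$ to the structure of $\{A_n\}$ and its subfamilies, is the second non-trivial ingredient.
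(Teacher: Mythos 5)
Your construction for part (a) does not work: the space $\mathbf{Y}=\{0\}\cup\bigcup\mathcal{A}$ built from a family $\mathcal{A}=\{A_n:n\in\omega\}$ with \emph{no partial choice function} is necessarily \emph{not} sequential. Indeed, any sequence of points of $\bigcup\mathcal{A}$ converging to $0$ must meet infinitely many $A_n$ (since $A_n\subseteq(\tfrac{1}{2n},\tfrac{1}{2n-1})$), and selecting from each such $A_n$ the term of least index yields a partial choice function for $\mathcal{A}$ --- which does not exist. Hence $\bigcup\mathcal{A}$ is sequentially closed but not closed in $\mathbf{Y}$, and adding the point $0$ (which you need for countable compactness) destroys sequentiality. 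Your proposed repair --- extracting a convergent sequence via the Loeb property of $\mathbb{R}$ --- cannot succeed, because the sets $G\cap A_n$ are not closed in $\mathbb{R}$ (as you yourself note), so the Loeb function of $\mathbb{R}$ does not apply to them; and if the $A_n$ \emph{were} closed in $\mathbb{R}$, the Loeb function of $\mathbb{R}$ would immediately give $\mathcal{A}$ a choice function. The paper avoids this tension entirely in part (a) by taking $\mathbf{Y}$ to be the \emph{discrete} space on an infinite weakly Dedekind-finite set: discreteness gives sequentiality and metrizability for free, weak Dedekind-finiteness gives countable compactness (every countable open cover has finite range), and a choice function on $\mathcal{P}(Y)\setminus\{\emptyset\}$ would well-order $Y$ (Remark \ref{r2.5}) and make it Dedekind-infinite.

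Part (b) has the same structural problem plus two further obstructions. First, if the $A_n$ are countably infinite subsets of shrinking intervals, then $\mathbf{Y}$ is not compact (each $A_n$ accumulates at points possibly outside $Y$); to get compactness the $A_n$ must be finite (or closed in $\mathbb{R}$, which reinstates the Loeb objection). Second, a countable family of finite subsets of $\mathbb{R}$ \emph{always} has a choice function in $\mathbf{ZF}$ (take minima), so the witnessing family cannot live inside $\mathbb{R}$ at all. The paper's proof therefore uses an abstract family of $2$-element sets supplied by the Hall--Shelah theorem \cite{hsh}: a $\mathbf{ZF}$-model in which every infinite countable family of $2$-element sets has a partial choice function but some countable family of $2$-element sets has no choice function. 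This is precisely the ``non-trivial metamathematical ingredient'' you defer to unspecified symmetric extensions; without citing or constructing such a model, and without the finiteness of the $A_n$ (which is what makes the one-point extension compact and makes partial choice suffice for sequentiality), the argument for (b) is not complete.
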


\begin{proof} 
(a) \ Let $\mathcal{M}$ be a model of $\mathbf{ZF}$ such that there exists in $\mathcal{M}$ an infinite weakly
Dedekind-finite set $Y$ (it is known that, in the Basic Fraenkel Model $%
\mathcal{N}$1 of \cite{hr}, the set of atoms is weakly Dedekind-finite and
this result is transferable in $\mathbf{ZF}$).  Clearly, the discrete space $\mathbf{Y}=\langle Y, \mathcal{P}(Y)\rangle$ is a sequential countably compact metrizable space such that every subset
of $Y$ is closed. However, $\mathbf{Y}$ has no Loeb function as, otherwise,
by Remark \ref{r2.5}, $Y$ would be well-orderable, hence Dedekind-infinite.

(b) It was proved in \cite{hsh} that, for every prime number $p$, there exists a model $\mathcal{M}$ of $\mathbf{ZF}$ in which every infinitely countable collection of $p$-element sets has a partial choice function but there exists in $\mathcal{M}$ a countable collection of $p$-element sets without a choice function.  Let us fix a  model $\mathcal{M}$ in which every infinite countable family of 2-element sets has a partial choice function but, in $\mathcal{M}$, there exists a family $\mathcal{A}=\{ A_n: n\in\mathbb{N}\}$ of pairwise disjoint $2$-element sets which does not have a choice function. Let us work in $\mathcal{M}$ with such a family $\mathcal{A}$. Take a point $\infty\notin\bigcup\mathcal{A}$ and put $%
Y= \{\infty \}\cup\bigcup\mathcal{A}$. We define a metric $d$ on $Y$ by
requiring: 
\begin{equation*}
d(x,y)=d(y,x)=\left\{ 
\begin{array}{c}
0,\text{ if }x=y \\ 
\frac{1}{n},\text{ if }x\in A_{n}\text{, }y\in A_{m}\text{ and }n\leq m \\ 
\frac{1}{n}\text{ if }x\in A_{n}\text{ and }y=\infty 
\end{array}%
\right. .
\end{equation*}%
Clearly, $\mathbf{Y}=\langle Y, d\rangle$ is a compact metric space. We prove that $\mathbf{Y}$ is sequential. Let $G$ be a sequentially closed subset of $\mathbf{Y}$.  Suppose that $G$ is not closed in $\mathbf{Y}$. Then $\infty\in\text{cl}_{\mathbf{Y}}(G)\setminus G$; thus, the set $N(G)=\{ n\in\omega: G\cap A_n\neq\emptyset\}$ is infinite. Since $\{ A_n: n\in N(G)\}$ has a partial choice function, there exist an infinite set $N\subseteq N(G)$ and a point $z\in\prod_{n\in N}(G\cap A_n)$. Then $(z(n))_{n\in N}$ is a sequence of points of $G$ which converges to $\infty$. This is impossible because $G$ is sequentially closed, while $\infty\notin G$. It follows from the contradiction obtained that space $\mathbf{Y}$ is sequential. Hence, by Theorem \ref{t3.15}, $\mathbf{PCAC}(\mathbf{Y},scl)$ is satisfied. However, $\mathbf{CAC}(%
\mathbf{Y},scl)$ fails because $\{A_{n}:n\in \omega \}$ is a family of
closed sets of $\mathbf{Y}$ without a choice function.
\end{proof}

\begin{theorem}
\label{t4.15} If, for every Cantor completely metrizable sequential,
sequentially compact space $\mathbf{\ Y}$, it holds true that every
sequentially closed subspace of $\mathbf{Y }$ has a well-orderable dense
subset, then $\mathbf{IWDI}$ holds.
\end{theorem}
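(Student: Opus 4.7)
The plan is to argue the contrapositive: assuming $\mathbf{IWDI}$ fails, I will exhibit a Cantor completely metrizable, sequential, sequentially compact space $\mathbf{Y}$ whose entire underlying set, trivially sequentially closed in $\mathbf{Y}$, admits no well-orderable dense subset. The witness will simply be an appropriately chosen infinite set equipped with the discrete metric.

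\emph{Extracting the bad set from $\neg\mathbf{IWDI}$.} If $\mathbf{IWDI}$ fails, there is an infinite set $X$ with $\mathcal{P}(X)$ Dedekind-finite. The singleton embedding $x\mapsto\{x\}$ of $X$ into $\mathcal{P}(X)$ forces $X$ itself to be Dedekind-finite, and since every infinite well-orderable set contains a copy of $\omega$, the set $X$ is not well-orderable.

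\emph{Verifying the three properties of $\mathbf{Y}$.} Let $\mathbf{Y}=\langle X,d\rangle$ with $d$ the discrete metric. That $d$ is Cantor complete is routine: diameters of nested non-empty closed sets that tend to $0$ must eventually equal $0$, so from some index on each set is a singleton and nestedness forces all later sets to coincide with it. The induced topology is discrete, so $\mathbf{Y}$ is automatically sequential. For sequential compactness I will argue that every sequence in $X$ has finite image: otherwise the greedy $\mathbf{ZF}$-construction selecting the first index witnessing each new value would produce an injection $\omega\hookrightarrow X$, contradicting the previous step; once the image is finite, the pigeonhole principle supplies a constant, hence convergent, subsequence.

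\emph{Obtaining the contradiction.} The space $\mathbf{Y}$ is trivially a sequentially closed subspace of itself. In a discrete space the only dense subset is the whole space, so any well-orderable dense subset of $\mathbf{Y}$ would make $X=Y$ well-orderable, contradicting the previous paragraph. Hence the universal hypothesis fails on $\mathbf{Y}$, which completes the contrapositive. The only step that is not essentially definitional is the sequential-compactness check, and it is fully powered by the Dedekind-finiteness of $X$ obtained at the start; there are no subtle obstacles beyond that.
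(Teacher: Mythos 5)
Your proof is correct, but it takes a genuinely different and much more elementary route than the paper's. The paper assumes $\neg\mathbf{IWDI}$, fixes an infinite $A$ with $\mathcal{P}(A)$ Dedekind-finite, and builds a hedgehog-like metric space: spokes $X_a=\{0\}\cup\{\langle a,\tfrac{1}{n}\rangle : n\in\mathbb{N}\}$ glued at the single accumulation point $0$. There, sequentiality and sequential compactness are non-trivial and genuinely use the Dedekind-finiteness of $\mathcal{P}(A)$ (an infinite countable quotient $A/\sim$ would inject $\omega$ into $\mathcal{P}(A)$), while the failure of the dense-subset condition comes from the non-existence of an infinite well-orderable subset of $A$. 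You instead put the discrete metric on the bad set itself; Cantor completeness, sequentiality, and the identification of dense subsets with the whole space are immediate, and the only real work is your observation that a sequence with infinite range in a Dedekind-finite set would yield, by a definable recursion on least indices, an injection $\omega\hookrightarrow X$ --- which is correct and choice-free. As for what each approach buys: your argument uses only that $X$ itself is Dedekind-finite, so it in fact establishes the stronger implication that the hypothesis yields $\mathbf{IDI}$ (Form 9), of which $\mathbf{IWDI}$ (Form 82) is a strictly weaker consequence; this sharpens the theorem. The paper's hedgehog space, on the other hand, is a non-discrete, countably compact example with exactly one non-isolated point, and that extra structure is what gets exploited in Remark \ref{r4.16}; note also that the paper itself deploys your discrete-space idea in the proof of Theorem \ref{t4.14}(a).
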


\begin{proof}
Let us assume that $\mathbf{IWDI}$ does not hold and fix an
infinite set $A$ such that $\mathcal{P}(A)$ is Dedekind-finite. We apply the hedgehog-like scheme whose general description is given in Example 4.13 of \cite{PW} (see also Example 4.1.5 of \cite{En}). Namely, for every $a\in A$, let $%
X_{a}=\{0\}\cup\{\langle a,\frac{1}{n}\rangle: n\in \mathbb{N}\}$. Put $X=\bigcup
\{X_{a}:a\in A\}$ and define a metric $d$ on $X$ by requiring, for all $x,y\in X$, $a,b\in A$ and $n,m\in\mathbb{N}$, the following: 
\begin{equation*}
d(x,y)=d(y,x)=\left\{ 
\begin{array}{c}
0 \text{ if }x=y \\ 
\frac{1}{n} \text{ if }x=0\text{ and }y=\langle a,\frac{1}{n}\rangle\\ 
\frac{1}{n}+\frac{1}{m}\text{ if }x=\langle a,\frac{1}{m}\rangle, y=\langle b,\frac{1%
}{n}\rangle\text{ where }a\neq b \\
\vert \frac{1}{n}-\frac{1}{m}\vert \text{ if } x=\langle a, \frac{1}{n}\rangle, y=\langle a, \frac{1}{m}\rangle 
\end{array}%
\right. .
\end{equation*}%
We show that the metric space $\mathbf{X}=\langle X, d\rangle$ is sequential. To this aim, suppose that $\mathbf{X}$ is not sequential. Fix a sequentially closed, not closed set $G$ of $\mathbf{X}$ and a point $x\in \text{cl}_{\mathbf{X}}(G)\setminus G$. Then $x=0$ because $0$ is the unique accumulation point of $\mathbf{X}$. If $a\in A$ is such that $G\cap X_{a}$ is infinite, then $G\cap X_{a}$ with the order it inherits from the obvious order of $X_{a}$ is a sequence of points of $X$ converging to $0$ and, in consequence, $0\in G$ - a contradiction. Hence $G\cap X_{a}$ is finite for each $a\in A$. Since $G$ is infinite, it follows that there exist
infinitely many $a\in A$ with $G\cap X_{a}\neq \emptyset $. We define an
equivalence relation $\sim $ on $A$ by letting, for all $a,b\in A$, the following:
$$a\sim b \Leftrightarrow  \{n\in\mathbb{N}: \langle a, \frac{1}{n}\rangle\in G\}=\{n\in\mathbb{N}: \langle b, \frac{1}{n}\rangle\in G\}.$$
It is easily seen that if $A/\sim $ were finite, then $0\notin\text{cl}_{\mathbf{X}}(G)$ - a contradiction. Hence $A/\sim $ is infinite. Since the set $[\mathbb{N}]^{<\mathbb{N}}$ of all finite subsets
of $\mathbb{N}$ is countable, it follows that $A/\sim $ is an infinite countable
partition of $A$ into non-empty sets. This contradicts the fact that $\mathcal{P}(A)$ is a
Dedekind-finite set. We infer from the contradiction obtained that $\mathbf{X}$ is sequential. One can easily check that $\mathbf{X}$ is Cantor complete. However, $\mathbf{X}$ has no dense well-orderable subset as,
otherwise, we could define an infinite well-orderable subset of $A$. To complete the proof, it suffices to prove that $\mathbf{X}$ is sequentially compact.  Indeed, given  sequence $(x_k)_{k\in\omega}$  of points of $X\setminus\{0\}$, the assumption that $\mathcal{P}(A)$ is Dedekind-finite implies that the set $H=\{a\in A: {\exists}_{k\in\omega} x_k\in X_{a}\setminus\{0\}\}$ is finite. This, together with the sequential compactness in $\mathbf{X}$ of the set $\bigcup_{a\in H}X_{a}$, implies that $\mathbf{X}$ is sequentially compact.  
\end{proof}

\begin{remark}
\label{r4.16} Let us notice that the hedgehog-like space $\mathbf{X}$ from
the proof to Theorem \ref{t4.15} is countably compact; thus, in view of
Theorem \ref{t3.16}, $\mathbf{X}\times\mathbf{Y}$ is sequential for every
sequential space $\mathbf{Y}$.
\end{remark}

Let us discuss in brief the following open problem posed in Question 3.9 of 
\cite{kw}:

\begin{problem}
\label{prob4.17} Must $\mathbb{R}^{\omega}$ be sequential in every model of $%
\mathbf{ZF}$ in which $\mathbb{R}$ is sequential?
\end{problem}

If $\mathbb{R}$ is sequential, then, by Theorem \ref{t4.2} and Proposition %
\ref{p3.3}, every locally compact subspace of the Hilbert cube $%
[0,1]^{\omega }$ is sequential. However, $(0,1)^{\omega }$ is not locally
compact. This is partly why we are unable to give a solution to Problem \ref%
{prob4.17} in this paper. Of course, by virtue of Proposition \ref{p3.3}, if 
$\mathbb{R}^{\omega }$ is sequential, so is $\mathbb{N}^{\omega }$.
Unfortunately, we do not know how to solve the following problem:

\begin{problem}
\label{prob4.18} Must $\mathbb{N}^{\omega}$ be sequential in every model of $%
\mathbf{ZF}$ in which $\mathbb{R}$ is sequential?
\end{problem}

\begin{proposition}
\label{p4.19} If $\mathbb{N}^{\omega}$ is sequential, then $\mathbb{R}$ is
sequential. Hence, $\mathbb{N}^{\omega}$ is not sequential in Cohen's basic
model $\mathcal{M}$1 of \cite{hr}.
\end{proposition}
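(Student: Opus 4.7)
The plan is to reduce sequentiality of $\mathbb{R}$ to sequentiality of the Cantor cube $\{0,1\}^{\omega}$ via Theorem \ref{t4.2}, and then to obtain sequentiality of $\{0,1\}^{\omega}$ from that of $\mathbb{N}^{\omega}$ by realizing the Cantor cube as a closed subspace of the Baire space.

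More concretely, the first step is to observe that $\{0,1\}^{\omega}$, viewed as a subset of $\mathbb{N}^{\omega}$, coincides as a topological subspace with the standard Cantor cube. Indeed, since $\mathbb{N}$ carries the discrete topology, $\{0,1\}$ is closed in $\mathbb{N}$; the projection-preimages $\pi_n^{-1}(\{0,1\})$ are therefore closed in $\mathbb{N}^{\omega}$, and $\{0,1\}^{\omega}=\bigcap_{n\in\omega}\pi_n^{-1}(\{0,1\})$ is a closed subset of $\mathbb{N}^{\omega}$. Because the subspace topology on a product of subspaces agrees with the product of subspace topologies, and because the subspace topology that $\{0,1\}$ inherits from discrete $\mathbb{N}$ is itself discrete, the subspace $\{0,1\}^{\omega}\subseteq\mathbb{N}^{\omega}$ carries precisely the product topology of the discrete two-point spaces, that is, the topology of the Cantor cube.

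The second step is then to invoke Proposition \ref{p3.3}: every closed subspace of a sequential space is sequential. So, if $\mathbb{N}^{\omega}$ is sequential, the closed subspace $\{0,1\}^{\omega}$ is sequential. By the equivalence (iv)$\Leftrightarrow$(v) in Theorem \ref{t4.2}, sequentiality of the Cantor cube is equivalent to sequentiality of $\mathbb{R}$, so $\mathbb{R}$ is sequential. For the second assertion, recall that in Cohen's basic model $\mathcal{M}1$ of \cite{hr} the reals are not sequential (this was noted, for instance, in Remark \ref{r2.9} and in the introduction to Section 3, since the set of added Cohen reals is sequentially closed but not closed in $\mathbb{R}$); hence, by the contrapositive of what we have just proved, $\mathbb{N}^{\omega}$ is not sequential in $\mathcal{M}1$.

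There is no substantial obstacle; the only point that merits care is the verification that the subspace embedding $\{0,1\}^{\omega}\hookrightarrow\mathbb{N}^{\omega}$ really identifies the Cantor cube with a closed subspace of the Baire space, rather than producing a strictly coarser or finer topology. Once this identification is in place, the argument is a direct concatenation of Proposition \ref{p3.3} and Theorem \ref{t4.2}, without any appeal to choice.
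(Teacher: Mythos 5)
Your proposal is correct and follows essentially the same route as the paper: the paper's proof likewise deduces sequentiality of the Cantor cube $\{0,1\}^{\omega}$ from that of $\mathbb{N}^{\omega}$ (via Proposition \ref{p3.3}, exactly as you spell out), then applies Theorem \ref{t4.2}, and obtains the second assertion from the failure of sequentiality of $\mathbb{R}$ in $\mathcal{M}1$ (Corollaries \ref{c4.8}--\ref{c4.9}). Your write-up merely makes explicit the closed-subspace identification that the paper leaves implicit.
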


\begin{proof} Suppose that $\mathbb{N}^{\omega}$ is sequential. Then the Cantor cube $\{0,1\}^{\omega}$ is also sequential, so $\mathbb{R}$ is sequential by Theorem \ref{t4.2}. Now, the second part of the proposition follows from the first part and from Corollaries \ref{c4.8}-\ref{c4.9}.
\end{proof}

\begin{remark}
\label{r4.20} To prove that if $\mathbb{N}^{\omega}$ is sequential, then $%
\mathbb{R}$ is sequential, one can use the following argument. Assuming that 
$\mathbb{N}^{\omega}$ is sequential, we deduce that the subspace $\mathbb{R}%
\setminus\mathbb{Q}$ of $\mathbb{R}$ is sequential. Therefore, if $A$ is a
sequentially closed subset of $\mathbb{R}$, then, since $B=A\cap(\mathbb{R}%
\setminus\mathbb{Q})$ is sequentially closed in $\mathbb{R}\setminus\mathbb{Q%
}$, it follows from Theorem \ref{t4.6} that there exists a countable subset $%
D$ of $C$ such that $C\subseteq \text{cl}_{\mathbb{R}\setminus\mathbb{Q}}(D)$%
. The set $E=D\cup(A\cap\mathbb{Q})$ is countable and $A\subseteq\text{cl}_{%
\mathbb{R}}(E)$, so $\mathbb{R}$ is sequential by Theorem \ref{t4.6}.
\end{remark}

It is interesting to compare $\omega-\mathbf{CAC}(\mathbb{R})$ with
conditions (ii)-(iii) of the following theorem which leads to partial
solutions of Problems \ref{prob4.17} and \ref{prob4.18}:

\begin{theorem}
\label{t4.21} Suppose that $\mathbf{X}$ is a Loeb regular space. If $\mathbf{%
X}$ is first-countable, then the following conditions are equivalent:

\begin{enumerate}
\item[(i)] $\mathbf{X}$ is sequential;

\item[(ii)] for every family $\{A_j: j\in J\}$ of non-empty sequentially
closed subsets of $\mathbf{X}$, there exists a family $\{B_j: j\in J\}$ of
non-empty well-orderable sets such that $B_j\subseteq A_j$ for each $j\in J$;

\item[(iii)] for every family $\{A_n: n\in\omega\}$ of non-empty
sequentially closed subsets of $\mathbf{X}$, there exist an infinite subset $%
N$ of $\omega$ and a family $\{B_n: n\in N\}$ of non-empty well-orderable
sets such that $B_n\subseteq A_n$ for each $n\in N$.
\end{enumerate}

Moreover, if $\mathbf{X}$ is second-countable, then conditions (i)-(iii) are
equivalent with the following:

\begin{enumerate}
\item[(iv)] for every family $\{A_n: n\in\omega\}$ of non-empty sequentially
closed subsets of $\mathbf{X}$, there exists a family $\{ D_n: n\in\omega\}$
of countable sets such that $D_n\subseteq A_n\subseteq\text{cl}_{\mathbf{X}%
}(D_n)$ for each $n\in\omega$ and, moreover, the set $D=\bigcup_{n\in\omega}
D_n$ is countable.
\end{enumerate}
\end{theorem}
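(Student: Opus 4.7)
The plan is to prove the cycle (i) $\Rightarrow$ (ii) $\Rightarrow$ (iii) $\Rightarrow$ (i), and then, under the second-countability assumption, to establish (i) $\Rightarrow$ (iv) $\Rightarrow$ (iii). Three of these implications are routine: (i) $\Rightarrow$ (ii) follows because sequentiality turns every $A_j$ into a closed set, so applying a Loeb function $f$ of $\mathbf{X}$ gives the singletons $B_j = \{f(A_j)\}$, which are trivially well-orderable; (ii) $\Rightarrow$ (iii) is immediate with $N = \omega$; and (iv) $\Rightarrow$ (iii) is immediate since each countable $D_n$ is well-orderable and must be non-empty, as $A_n \neq \emptyset$ and $A_n \subseteq \text{cl}_{\mathbf{X}}(D_n)$.

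The heart of the argument is (iii) $\Rightarrow$ (i), and I would carry it out by showing that (iii) implies $\mathbf{PCAC}(\mathbf{X}, scl)$, after which Theorem \ref{t3.15}(b) completes the proof. Let $\{A_n : n \in \omega\}$ be a family of non-empty sequentially closed subsets of $\mathbf{X}$. Applying (iii) yields an infinite $N \subseteq \omega$ and non-empty well-orderable sets $B_n \subseteq A_n$ for $n \in N$. For each such $n$, the set $\text{cl}_{\mathbf{X}}(B_n)$ is non-empty and closed, so the Loeb function $f$ produces $b_n = f(\text{cl}_{\mathbf{X}}(B_n))$. The crucial step is the inclusion $\text{cl}_{\mathbf{X}}(B_n) \subseteq A_n$: for any fixed $x \in \text{cl}_{\mathbf{X}}(B_n)$, the mere \emph{existence} of a well-ordering $\prec$ on $B_n$ together with a nested countable neighborhood base $\{V_k : k \in \omega\}$ at $x$ lets us form the sequence $x_k = \min_{\prec}(V_k \cap B_n)$, which lies in $B_n \subseteq A_n$ and converges to $x$; sequential closedness of $A_n$ then gives $x \in A_n$. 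Thus $(b_n)_{n \in N}$ is a partial choice function for $\{A_n : n \in \omega\}$, verifying $\mathbf{PCAC}(\mathbf{X}, scl)$.

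For the second-countable case, only (i) $\Rightarrow$ (iv) requires work. I would fix a countable base $\mathcal{B} = \{V_k : k \in \omega\}$ of $\mathbf{X}$ and a Loeb function $f$. Since sequentiality makes each $A_n$ closed, for every pair $(n,k)$ with $\text{cl}_{\mathbf{X}}(V_k) \cap A_n \neq \emptyset$, setting $d_{n,k} = f(\text{cl}_{\mathbf{X}}(V_k) \cap A_n)$ yields a point of $A_n$. Define $D_n = \{d_{n,k} : k \in \omega,\ \text{cl}_{\mathbf{X}}(V_k) \cap A_n \neq \emptyset\}$. Regularity of $\mathbf{X}$ ensures that every open neighborhood of a point of $A_n$ contains some $\text{cl}_{\mathbf{X}}(V_k)$ with $V_k$ meeting $A_n$, so $A_n \subseteq \text{cl}_{\mathbf{X}}(D_n)$; the union $D = \bigcup_{n\in\omega} D_n$ is countable because it is indexed by a subset of $\omega \times \omega$.

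The main obstacle is the passage from \emph{well-orderable} to \emph{usable} in the step (iii) $\Rightarrow$ (i): naively, without a uniform choice of well-orderings on $\{B_n : n \in N\}$ one cannot select a point of each $B_n$. The resolution is that the inclusion $\text{cl}_{\mathbf{X}}(B_n) \subseteq A_n$ is a purely set-theoretic statement whose verification for each individual $x$ invokes only the \emph{existence} of a well-ordering of $B_n$, never a uniform choice across $n$; once the inclusion is known, the single Loeb function $f$ of $\mathbf{X}$ picks a point $b_n$ from $\text{cl}_{\mathbf{X}}(B_n)$ uniformly in $n$, so no further choice is needed.
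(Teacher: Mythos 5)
Your proposal is correct and follows essentially the same route as the paper: the same cycle of implications, the same use of a Loeb function to turn closed (hence, via sequentiality, sequentially closed) sets into singletons for (i)$\Rightarrow$(ii), the same crucial inclusion $\text{cl}_{\mathbf{X}}(B_n)\subseteq A_n$ obtained from well-orderability of $B_n$, first countability, and sequential closedness of $A_n$, and the same construction $D_n=\{f(A_n\cap\text{cl}_{\mathbf{X}}(V_k)):\dots\}$ for (i)$\Rightarrow$(iv). The only cosmetic difference is that you factor (iii)$\Rightarrow$(i) through $\mathbf{PCAC}(\mathbf{X},scl)$ and Theorem \ref{t3.15}(b), whereas the paper inlines the nested-neighbourhood-base argument directly; both rest on the same ideas.
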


\begin{proof}  Let  $f$ be a fixed Loeb function of $\mathbf{X}$. It is obvious that (ii) implies (iii). To prove that (i) implies (ii), we consider a family  $\{A_j: j\in J\}$ of non-empty sequentially closed subsets of $\mathbf{X}$ and we assume (i). Then, for each $j\in J$, the set $A_j$ is closed in $\mathbf{X}$, so we can put $B_j=\{f(A_j)\}$ to obtain a collection $\{B_j: j\in J\}$ of one-point (so well-orderable) sets such that $B_j\subseteq A_j$ for each $j\in J$. This is why (i) implies (ii).

Let us assume (iii) and let $A$ be a sequentially closed subset of $\mathbf{X}$. Assuming that $\mathbf{X}$ is first-countable, we show that $A$ is closed in $\mathbf{X}$. Let $x\in\text{cl}_{\mathbf{X}}(A)$. Fix any countable base $\{U_n: n\in\omega\}$ of neighbourhoods of $x$ such that $U_{n+1}\subseteq U_n$ for each $n\in\omega$. Put $A_n=A\cap\text{cl}_{\mathbf{X}}(U_n)$ for each $n\in\omega$. Then $\{A_n: n\in\omega\}$ is a family of non-empty sequentially closed subsets of $\mathbf{X}$. By (iii), there exist an infinite subset $N$ of $\omega$ and a collection $\{B_n: n\in N\}$ of well-orderable sets such that $B_n\subseteq A_n$ for each $n\in N$. Since the sets $B_n$ are well-orderable and $\mathbf{X}$ is first-countable,  while $A_n$ are sequentially closed in $\mathbf{X}$, the inclusion $\text{cl}_{\mathbf{X}}(B_n)\subseteq A_n$ holds for each $n\in N$. We notice that, since $\mathbf{X}$ is regular, $(f(\text{cl}_{\mathbf{X}}(B_n)))_{n\in N}$ is a sequence of points of $A$ which converges to $x$, so $x\in A$ and, in consequence, $A$ is closed in $\mathbf{X}$. Hence (iii) implies (i). If $\mathbf{X}$ is first-countable, then (iv) implies (i) because (iv) implies (iii).  

Finally, we assume that the space $\mathbf{X}$ is a second-countable and sequential. We fix a countable base $\mathcal{B}=\{V_j:  j\in\omega\}$ of $\mathbf{X}$. We consider any family $\{A_n: n\in\omega\}$ of non-empty sequentially closed subsets of $\mathbf{X}$. Since $\mathbf{X}$ is sequential, the set $A_n$ is closed in $\mathbf{X}$ for each $n\in\omega$. Let $M_n=\{j\in\omega: A_n\cap V_j\neq\emptyset\}$ and let $D_n=\{f(A_n\cap\text{cl}_{\mathbf{X}}(V_j)): j\in M_n\}$ for each $n\in\omega$. Obviously, $D_n\subseteq A_n\subseteq\text{cl}_{\mathbf{X}}(D_n)$ for each $n\in\omega$. Since we can fix a sequence $(\psi)_{n\in\omega}$ of injections $\psi_n: D_n\to\omega$, the set $D=\bigcup_{n\in\omega}D_n$ is countable.
\end{proof}

One can easily deduce the following corollary from Theorems \ref{t2.2} and %
\ref{t4.21}:

\begin{corollary}
\label{c4.22} For every Cantor completely metrizable second-countable space $%
\mathbf{X}$, conditions (i)-(iv) of Theorem \ref{t4.21} are equivalent. In
particular, conditions (i)-(iv) of Theorem \ref{t4.21} are equivalent if $%
\mathbf{X}$ is one of the following spaces: $\mathbb{R}^m$ where $m\in%
\mathbb{N}$, $\mathbb{R}^{\omega}$, $\mathbb{N}^{\omega}$, $[0, 1]^{\omega}$%
, $\{0,1\}^{\omega}$.
\end{corollary}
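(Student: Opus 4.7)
The plan is to verify that any Cantor completely metrizable second-countable space $\mathbf{X}$ meets all the hypotheses of Theorem \ref{t4.21} and then invoke that theorem directly. A metrizable space is automatically regular, and second-countability gives first-countability, so only the Loeb property of $\mathbf{X}$ itself still needs to be established; once that is in hand, Theorem \ref{t4.21} delivers the equivalence of conditions (i)--(iv) for $\mathbf{X}$, and the listed examples reduce to checking Cantor complete metrizability and second-countability in each case.

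The key step is therefore to exhibit a Loeb function of $\mathbf{X}$. I would observe that the construction in the proof of Theorem \ref{t2.2} used only two features of $\mathbf{X}^{\omega}$: a Cantor complete metric $d$ inducing the topology and a well-orderable countable base $\mathcal{A}$. Both features are present for $\mathbf{X}$ itself, so the same argument applies verbatim: fix a Cantor complete metric $d$ on $X$ and a fixed well-ordering of a countable base $\mathcal{B}$; for each non-empty closed $C\subseteq X$ inductively pick $A_n(C)\in\mathcal{B}$ as the first basic open set with $C\cap A_n(C)\ne\emptyset$, $\text{cl}_{\mathbf{X}}(A_{n+1}(C))\subseteq A_n(C)$ and $\delta_d(A_n(C))<1/n$. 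Cantor completeness then forces $C\cap\bigcap_{n}A_n(C)$ to be a singleton, and assigning $C$ to this single point defines a Loeb function. (Alternatively, $\mathbf{X}^{\omega}$ is Loeb by Theorem \ref{t2.2}; applying its Loeb function to the whole space and projecting yields a point $x_0\in X$, whereupon $\mathbf{X}\cong X\times\{x_0\}\times\{x_0\}\times\cdots$ is a closed subspace of $\mathbf{X}^{\omega}$, and Proposition \ref{p2.1}(i) gives the Loeb property.)

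Having established that $\mathbf{X}$ is a Loeb, regular, second-countable (hence first-countable) space, Theorem \ref{t4.21} applies and yields the equivalence of (i)--(iv). For the particular examples, $\mathbb{R}^m$ with $m\in\mathbb{N}$ under the Euclidean metric is manifestly Cantor complete and second-countable, while the infinite-power examples $\mathbb{R}^{\omega}$, $\mathbb{N}^{\omega}$, $[0,1]^{\omega}$, $\{0,1\}^{\omega}$ are covered by the observation in the proof of Theorem \ref{t2.2} that countable products of Cantor complete metric spaces are Cantor complete (from \cite{kyr2}) and that a countable product of second-countable spaces is second-countable (using the well-orderability of each countable base to enumerate a countable base of the product).

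There is essentially no serious obstacle here: the whole content of the corollary is the recognition that the Loeb argument of Theorem \ref{t2.2} was never specific to $\mathbf{X}^{\omega}$ and that metrizability plus second-countability supplies the remaining hypotheses of Theorem \ref{t4.21}. The most delicate sub-point—ensuring that the Loeb construction goes through in $\mathbf{ZF}$ without implicit choice—is already addressed by the fixed well-ordering of the countable base used in Theorem \ref{t2.2}.
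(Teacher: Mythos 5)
Your proposal is correct and follows exactly the route the paper intends: the paper gives no explicit proof, stating only that the corollary is easily deduced from Theorems \ref{t2.2} and \ref{t4.21}, and your argument supplies precisely that deduction (metrizability gives regularity, second countability gives first countability, and the Loeb property of $\mathbf{X}$ itself follows either by re-running the well-ordered-base construction of Theorem \ref{t2.2} or via Proposition \ref{p2.1} from the Loebness of $\mathbf{X}^{\omega}$). The verification of the hypotheses for the listed examples is also handled as the paper would.
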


It is well known that the sets $\mathbb{R}$ and $\mathbb{R}\times \mathbb{R}$
are equipotent in $\mathbf{ZF}$; however, we cannot locate in the literature
a proof to the following useful proposition, so we include a sketch of a
proof to it:

\begin{proposition}
\label{p4.23}  The sets $\mathbb{R}^{\omega}$ and $\mathbb{R}$ are
equipotent in $\mathbf{ZF}$.
\end{proposition}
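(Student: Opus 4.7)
The plan is to reduce the equipotence of $\mathbb{R}^{\omega}$ and $\mathbb{R}$ to the more tractable equipotence of $\{0,1\}^{\omega}$ and $(\{0,1\}^{\omega})^{\omega}$, using a single fixed bijection between $\mathbb{R}$ and $\{0,1\}^{\omega}$ so that the componentwise extension to the countable product requires no choice.

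First I would fix, in $\mathbf{ZF}$, a concrete bijection $\beta:\omega\times\omega\to\omega$ (for example, the Cantor pairing function). From $\beta$ one defines a bijection $\Phi:\{0,1\}^{\omega}\to(\{0,1\}^{\omega})^{\omega}$ by the explicit formula $\Phi(f)(n)(m)=f(\beta(n,m))$; its inverse sends $g\in(\{0,1\}^{\omega})^{\omega}$ to the sequence $k\mapsto g(\pi_1\beta^{-1}(k))(\pi_2\beta^{-1}(k))$. No choice is involved because $\beta$ is fixed beforehand.

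Next I would construct, in $\mathbf{ZF}$, a bijection $\psi:\mathbb{R}\to\{0,1\}^{\omega}$ by the Cantor--Bernstein theorem (which is provable in $\mathbf{ZF}$). For the injection $\mathbb{R}\hookrightarrow\{0,1\}^{\omega}$ one composes the explicit homeomorphism $x\mapsto\frac{1}{\pi}\arctan(x)+\frac{1}{2}$ of $\mathbb{R}$ onto $(0,1)$ with the canonical binary expansion using the eventually-zero convention at dyadic rationals. For the injection $\{0,1\}^{\omega}\hookrightarrow\mathbb{R}$ one uses the Cantor-set embedding $f\mapsto\sum_{n\in\omega}\frac{2f(n)}{3^{n+1}}$, which is actually a bijection onto the Cantor ternary set. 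A single application of Cantor--Bernstein produces $\psi$.

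Having this fixed $\psi$, I can then define $\Psi:\mathbb{R}^{\omega}\to(\{0,1\}^{\omega})^{\omega}$ by $\Psi(x)(n)=\psi(x(n))$ for every $x\in\mathbb{R}^{\omega}$ and every $n\in\omega$. Although this looks like a countable family of bijections, it uses the same $\psi$ at every coordinate, so it is a genuine definition in $\mathbf{ZF}$ with no appeal to a choice principle. The composition $\psi^{-1}\circ\Phi^{-1}\circ\Psi:\mathbb{R}^{\omega}\to\mathbb{R}$ is then the required bijection. The only real subtlety, which I expect to be the point to guard against, is the temptation to invoke AC when passing from the one-variable bijection $\psi$ to its $\omega$-fold product; here one must emphasize that applying a fixed function coordinatewise is a $\mathbf{ZF}$-legitimate operation.
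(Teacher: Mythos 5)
Your argument is correct and choice-free, but it follows a genuinely different route from the paper's. You reduce everything to the self-similarity of the Cantor cube: a fixed pairing bijection $\beta:\omega\times\omega\to\omega$ yields an explicit bijection $\{0,1\}^{\omega}\cong(\{0,1\}^{\omega})^{\omega}$, one application of Cantor--Schr\"oder--Bernstein to two explicit injections yields a single fixed bijection $\psi:\mathbb{R}\to\{0,1\}^{\omega}$, and applying that one $\psi$ coordinatewise is indeed a legitimate $\mathbf{ZF}$ definition (no family of independent choices is being made), so the composite is an outright bijection $\mathbb{R}^{\omega}\to\mathbb{R}$. The paper instead exploits second countability: it fixes a countable base $\mathcal{B}$ of $\mathbb{R}^{\omega}$, injects $\mathbb{R}^{\omega}$ into $\mathcal{P}(\mathcal{B})$ by $x\mapsto\{B\in\mathcal{B}:x\in B\}$ (injective since the space is Hausdorff), identifies $\mathcal{P}(\mathcal{B})$ with $\mathbb{R}$, and only then invokes Cantor--Schr\"oder--Bernstein at the top level to combine the two injections. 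Your version buys an essentially explicit bijection and a reusable template (the same argument gives $X^{\omega}$ equipotent with $X$ for any set $X$ carrying a fixed bijection onto $\{0,1\}^{\omega}$); the paper's version is shorter and isolates the more general fact that any second-countable $T_0$ space has cardinality at most that of the continuum. Both arguments are valid in $\mathbf{ZF}$.
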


\begin{proof} Clearly, $\vert \mathbb{R}\vert\leq\vert\mathbb{R}^{\omega}\vert$  because $\mathbb{R}$ is equipotent with a subset of $\mathbb{R}^{\omega}$. To show that $\vert \mathbb{R}^{\omega}\vert\leq\vert\mathbb{R}\vert$, in much the same way, as in the proof to Theorem \ref{t2.2}, we notice that, considering $\mathbb{R}$ with its natural topology, we obtain that the metrizable space $\mathbb{R}^{\omega}$ is second-countable. We fix a countable base $\mathcal{B}$ of $\mathbb{R}^{\omega}$ and define an injection $H:\mathbb{R}^{\omega}\to\mathcal{P}(\mathcal{B})$ as follows: $H(x)=\{B\in\mathcal{B}: x\in B\}$ for each $x\in\mathbb{R}^{\omega}$. This shows that $\vert\mathbb{R}^{\omega}\vert\leq\vert\mathcal{P}(\mathcal{B})\vert$. Since the sets $\mathbb{R}$ and $\mathcal{P}(\mathcal{B})$ are equipotent, we deduce that $\vert\mathbb{R}^{\omega}\vert\leq\vert\mathbb{R}\vert$. As the Cantor-Schr\"oder-Bernstein theorem is provable in $\mathbf{ZF}$, we can infer that $\vert\mathbb{R}^{\omega}\vert=\vert\mathbb{R}\vert$.
\end{proof}

We can extend Proposition 4.57 of \cite{herl} by the following:

\begin{theorem}
\label{t4.24} $\omega-\mathbf{CAC}(\mathbb{R})$ implies $\mathbb{R}^{\omega}$
is sequential.
\end{theorem}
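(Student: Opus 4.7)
The plan is to derive sequentiality of $\mathbb{R}^{\omega}$ from Corollary \ref{c4.22} by verifying condition (ii) of Theorem \ref{t4.21} for $\mathbf{X}=\mathbb{R}^{\omega}$, using the equipotency $|\mathbb{R}^{\omega}|=|\mathbb{R}|$ of Proposition \ref{p4.23} to transport $\omega$-$\mathbf{CAC}(\mathbb{R})$ from $\mathbb{R}$ onto $\mathbb{R}^{\omega}$. Note that the hypotheses of Theorem \ref{t4.21} hold for $\mathbb{R}^{\omega}$: by Theorem \ref{t2.2}, $\mathbb{R}^{\omega}$ is Loeb, and, being metrizable and second-countable, it is regular and first-countable; hence Corollary \ref{c4.22} applies and it is enough to establish any one of conditions (i)--(iv) of Theorem \ref{t4.21}.

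Concretely, I would fix a bijection $\phi:\mathbb{R}^{\omega}\to\mathbb{R}$, whose existence is secured in $\mathbf{ZF}$ by Proposition \ref{p4.23}. Given an arbitrary family $\{A_n:n\in\omega\}$ of non-empty sequentially closed subsets of $\mathbb{R}^{\omega}$, push it forward to the family $\{\phi(A_n):n\in\omega\}$ of non-empty subsets of $\mathbb{R}$. By $\omega-\mathbf{CAC}(\mathbb{R})$, there exists a family $\{C_n:n\in\omega\}$ of non-empty countable subsets of $\mathbb{R}$ with $C_n\subseteq\phi(A_n)$ for every $n\in\omega$. Setting $B_n=\phi^{-1}(C_n)$, each $B_n$ is a non-empty subset of $A_n$ that is equipotent with the countable set $C_n$, so $B_n$ is countable and hence well-orderable. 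The family $\{B_n:n\in\omega\}$ thus witnesses condition (ii) of Theorem \ref{t4.21}, so by Corollary \ref{c4.22} the space $\mathbb{R}^{\omega}$ is sequential.

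I do not anticipate any real obstacle: the argument is essentially bookkeeping that combines three previously established facts (Theorem \ref{t2.2}, Proposition \ref{p4.23}, and Corollary \ref{c4.22}). The only point worth emphasizing is that the bijection $\phi$ need not be continuous, measurable, or in any way compatible with the topology of $\mathbb{R}^{\omega}$; only its bijectivity is used, since the well-orderability of $B_n$ is a purely cardinal-theoretic consequence of the countability of $C_n$, and no topological property of $B_n$ is required by condition (ii) of Theorem \ref{t4.21}.
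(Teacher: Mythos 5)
Your proposal is correct and follows essentially the same route as the paper's own proof: both verify the choice-type condition of Theorem \ref{t4.21} for $\mathbb{R}^{\omega}$ by transporting $\omega$-$\mathbf{CAC}(\mathbb{R})$ along a bijection between $\mathbb{R}$ and $\mathbb{R}^{\omega}$ supplied by Proposition \ref{p4.23}, and then pulling the countable (hence well-orderable) selections back into the sets $A_n$. The only differences are cosmetic: the paper uses a bijection $h:\mathbb{R}\to\mathbb{R}^{\omega}$ rather than its inverse, and cites Theorem \ref{t4.21} directly after noting that $\mathbb{R}^{\omega}$ is Loeb, regular and second-countable, where you route through Corollary \ref{c4.22}.
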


\begin{proof} Assume $\omega-\mathbf{CAC}(\mathbb{R})$.  By Corollary \ref{c2.3}, the space $\mathbb{R}^{\omega}$ is Loeb. Of course, $\mathbb{R}^{\omega}$ is a second-countable regular space.  We check that condition (ii) of Theorem \ref{t4.21} is satisfied for $\mathbf{X}=\mathbb{R}^{\omega}$. Let $\{A_n: n\in\omega\}$ be a collection of  non-empty subsets of $\mathbb{R}^{\omega}$. By  Proposition \ref{p4.23}, there exists a bijection $h:\mathbb{R}\to\mathbb{R}^{\omega}$. It follows from $\omega-\mathbf{CAC}(\mathbb{R})$ that there exists a collection $\{C_n: n\in\omega\}$ of non-empty at most countable sets such that $C_n\subseteq h^{-1}(A_n)$ for each $n\in\omega$. Put $B_n= h(C_n)$ for each $n\in\omega$. Clearly, all the sets $B_n$ are at most countable, so well-orderable. Moreover, $\emptyset\neq B_n\subseteq A_n$ for each $n\in\omega$. This, together with Theorem \ref{t4.21}, implies that $\mathbb{R}^{\omega}$ is sequential. 
\end{proof}

\begin{corollary}
\label{c4.25} It is relatively consistent with $\mathbf{ZF}$ that
simultaneously $\mathbb{R}^{\omega}$ is sequential and $\mathbf{CAC}(\mathbb{%
R})$ fails.
\end{corollary}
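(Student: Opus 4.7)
The plan is to derive Corollary \ref{c4.25} as an immediate consequence of Theorem \ref{t4.24}, once we have in hand a model of $\mathbf{ZF}$ that separates $\omega-\mathbf{CAC}(\mathbb{R})$ from $\mathbf{CAC}(\mathbb{R})$. Concretely, if there exists a model $\mathcal{M}$ of $\mathbf{ZF}$ in which $\omega-\mathbf{CAC}(\mathbb{R})$ holds while $\mathbf{CAC}(\mathbb{R})$ fails, then Theorem \ref{t4.24} applied inside $\mathcal{M}$ gives that $\mathbb{R}^{\omega}$ is sequential in $\mathcal{M}$, so $\mathcal{M}$ witnesses the relative consistency of the conjunction ``$\mathbb{R}^{\omega}$ is sequential and $\mathbf{CAC}(\mathbb{R})$ fails.''

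The whole work is thus pushed into exhibiting such an $\mathcal{M}$. First I would try to quote one directly from Herrlich's monograph \cite{herl}, since $\omega-\mathbf{CAC}(\mathbb{R})$ appears there as Definition 4.56 and is catalogued as strictly weaker than $\mathbf{CAC}(\mathbb{R})$, or from the Howard--Rubin compendium \cite{hr}, where the form and its known separations from Form 94 are tracked. If no ready-made reference is available, I would construct a symmetric Cohen-style extension: start from a ground model of $\mathbf{ZFC}$, adjoin a countable sequence of mutually generic reals organized as pairwise disjoint non-empty finite blocks $\{A_n:n\in\omega\}\subseteq \mathcal{P}(\mathbb{R})$, and pass to the symmetric inner model determined by a filter of subgroups of the automorphism group of the forcing that permutes the elements within each block freely but fixes the enumeration of the blocks. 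By design, the family $\{A_n:n\in\omega\}$ has no choice function in this model, so $\mathbf{CAC}(\mathbb{R})$ fails; on the other hand, the ground-model reals remain hereditarily symmetric, so each hereditarily symmetric name for a non-empty set of reals can be refined uniformly to a hereditarily symmetric name of a countable non-empty subset, yielding $\omega-\mathbf{CAC}(\mathbb{R})$.

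The main obstacle, and the only step that is not a routine appeal to Theorem \ref{t4.24}, is the verification that the chosen symmetric model actually preserves $\omega-\mathbf{CAC}(\mathbb{R})$: one must show that, given a hereditarily symmetric name $\dot{\mathcal{A}}=\{\dot{A}_n:n\in\omega\}$ for a countable family of non-empty sets of reals, one can extract a hereditarily symmetric name for a sequence $\{\dot{B}_n:n\in\omega\}$ of countable non-empty sets with $\dot{B}_n\subseteq\dot{A}_n$, uniformly in $n$. This demands a careful analysis of the supports and of the complexity of the names involved, and it is the only computation I would expect to require genuine care; once it is done, the corollary is immediate from Theorem \ref{t4.24}.
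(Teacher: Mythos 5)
Your proposal is correct and takes essentially the same route as the paper: the paper likewise deduces the corollary from Theorem \ref{t4.24} applied in a known model separating $\omega-\mathbf{CAC}(\mathbb{R})$ from $\mathbf{CAC}(\mathbb{R})$, namely the Feferman--Levy model $\mathcal{M}9$ of \cite{hr}, in which $\mathbb{R}$ is a countable union of countable sets. Your fallback symmetric-extension construction is therefore unnecessary (and, as you yourself note, its verification of $\omega-\mathbf{CAC}(\mathbb{R})$ is the part that would require real work); the one-line citation of $\mathcal{M}9$ suffices.
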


\begin{proof} It is known that, in Feferman-Levy model $\mathcal{M}$9 of \cite{hr}, $\omega-\mathbf{CAC}(\mathbb{R})$ holds and $\mathbf{CAC}(\mathbb{R})$ fails. In view of Theorem \ref{t4.24}, $\mathbb{R}^{\omega}$ is sequential in $\mathcal{M}$9.
\end{proof}

Since spaces homeomorphic with closed subspaces of sequential spaces are
sequential, we can deduce the following corollary from Theorem \ref{t4.24}:

\begin{corollary}
\label{c4.26} $\omega-\mathbf{CAC}(\mathbb{R})$ implies that the space $%
\mathbb{N}^{\omega}$ is sequential and, for every $n\in\mathbb{N}$, the
space $\mathbb{R}^{n}$ is sequential.
\end{corollary}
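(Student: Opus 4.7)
The plan is to derive this corollary as a direct consequence of Theorem \ref{t4.24} together with Proposition \ref{p3.3}. By Theorem \ref{t4.24}, the assumption $\omega-\mathbf{CAC}(\mathbb{R})$ gives that $\mathbb{R}^{\omega}$ is sequential. Since every closed subspace of a sequential space is sequential (Proposition \ref{p3.3}), it suffices to exhibit $\mathbb{N}^{\omega}$ and each $\mathbb{R}^{n}$ as homeomorphic to closed subspaces of $\mathbb{R}^{\omega}$.

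For $\mathbb{N}^{\omega}$, first recall that $\mathbb{N}$ is a closed subset of $\mathbb{R}$, and the product topology of $\mathbb{N}^{\omega}$ induced from the natural topology of $\mathbb{R}$ coincides with the usual Baire-space topology; thus $\mathbb{N}^{\omega}$ is naturally a closed subspace of $\mathbb{R}^{\omega}$. For $n\in\mathbb{N}$, the set $F_n=\mathbb{R}^{n}\times\{0\}^{\omega\setminus n}$ is a closed subset of $\mathbb{R}^{\omega}$, and the obvious projection $F_n\to\mathbb{R}^{n}$ is a homeomorphism; hence $\mathbb{R}^{n}$ is homeomorphic to a closed subspace of $\mathbb{R}^{\omega}$.

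Applying Proposition \ref{p3.3} to these two embeddings yields that $\mathbb{N}^{\omega}$ and $\mathbb{R}^{n}$ are sequential for each $n\in\mathbb{N}$. I do not expect any real obstacle here, as both closed embeddings are immediate from the product structure and no further choice is required beyond what is already packaged into Theorem \ref{t4.24}.
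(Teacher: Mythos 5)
Your proposal is correct and follows exactly the paper's route: the paper derives the corollary from Theorem \ref{t4.24} together with the observation that $\mathbb{N}^{\omega}$ and each $\mathbb{R}^{n}$ are homeomorphic to closed subspaces of $\mathbb{R}^{\omega}$, which is precisely your argument via Proposition \ref{p3.3}. Your explicit verification of the two closed embeddings is just a spelled-out version of what the paper leaves implicit.
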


\bigskip

\noindent \textsc{Kyriakos Keremedis}\newline
Department of Mathematics, University of the Aegean, Karlovassi, Samos
83200, Greece.\newline
\emph{E-mail}: kker@aegean.gr \bigskip

\noindent\textsc{Eliza Wajch}\newline
Institute of Mathematics and Physics,  University of Natural Sciences and
Humanities in Siedlce, ul. 3 Maja 54, 08-110 Siedlce, Poland.\newline
\emph{E-mail}: eliza.wajch@wp.pl


\begin{thebibliography}{99}
\bibitem{Bo} T. K. Boehme, \textit{Linear $s$-spaces}, Proc. Symp.
Convergence Structures, Univ. of Oklahoma, 1965.

\bibitem{En} R. Engelking, \textit{General Topology}, Sigma Series in Pure
Mathematics 6, Heldermann, Berlin 1989.

\bibitem{F1} S. P. Franklin, \textit{Spaces in which sequences suffice},
Fund. Math. 57 (1965), 107--115.

\bibitem{F2} S. P. Franklin, \textit{Spaces in which sequences suffice II},
Fund. Math. 61 (1967), 51--56.

\bibitem{gg2} G. Gutierres, \textit{On countable choice and sequential spaces%
}, Math. Logic Quart. \textbf{54}, No. 2, (2008), 145--152.

\bibitem{hsh} E. J. Hall, S. Shelah, \textit{Partial choice functions for
families of finite sets}, Fund. Math. 220 (2013) 207--216.

\bibitem{herl} H. Herrlich, \textit{Axiom of Choice}, Lecture Notes Math. 
\textbf{1876}, Springer, New York, 2006.

\bibitem{hkt} H. Herrlich, K. Keremedis, E. Tachtsis, \textit{Countable sums
and products of Loeb and selective metric spaces}, Comment. Math. Univ.
Carolin. \textbf{46}, No. 2, (2005) 373--384.

\bibitem{hr} P. Howard and J. E. Rubin, \textit{Consequences of the axiom of
choice, }Math. Surveys and Monographs \textbf{59 }A.M.S. Providence R.I.,
1998.

\bibitem{kyr} K. Keremedis, \textit{On sequentially compact and related
notions of compactness of metric spaces in} $\mathbf{ZF}$, Bull. Polish
Acad. Sci. Math. \textbf{64} (2016) 29--46.

\bibitem{kyr2} K. Keremedis, \textit{On the relative strength of forms of
compactness of metric spaces and their countable productivity in $\mathbf{ZF}
$}, Topology Appl. \textbf{159} (2012) 3396--3403.

\bibitem{kyr1} K. Keremedis, \textit{Compact and Loeb Hausdorff spaces and
the axiom of choice for families of finite sets}, Math. Logic Quart. \textbf{%
58}, No. 3, (2012) 130--138.

\bibitem{kt} K. Keremedis and E. Tachtsis: Compact metric spaces and weak
forms of the axiom of choice, Math. Logic Quart. \textbf{47} (2001) 117--128.

\bibitem{kertach} K. Keremedis and E. Tachtsis, \textit{On Loeb and weakly
Loeb Hausdorff spaces}, Sci. Math. Jpn. \textbf{53} (2001) 247--251.

\bibitem{keta} K. Keremedis and E. Tachtsis, \textit{Weak axioms of choice
for metric spaces}, Proc. Amer. Math. Soc. \textbf{133}, No. 12, (2005)
3691--3701.

\bibitem{kw} K. Keremedis, Eliza Wajch, \textit{On densely complete metric
spaces and extensions of uniformly continuous mappings in} $\mathbf{ZF}$,
preprint available at arxiv:1901.08709.

\bibitem{ku} K. Kunen, \textit{The Foundations of Mathematics}, Individual
Authors and College Publications, London 2009.

\bibitem{loeb} P. A. Loeb, \textit{A new proof of the Tychonoff theorem},
Amer. Math. Monthly \textbf{72} (1965) 711-717.

\bibitem{mich} E. Michael, \textit{Local compactness and Cartesian products
of quotient maps and $K$-spaces}, Ann. Inst. Fourier 18(2) (1968) 281--286.

\bibitem{mm} M. Morillon, \textit{Notions of compactness for some special
subsets of} $\mathbb{R}^{I}$ \textit{and some weak forms of the axiom of
choice}, J. Symb. Logic \textbf{75} (2010) 255--268.

\bibitem{OPWZ} C. \"Ozel, A. Pi\k{e}kosz, E. Wajch and H. Zekraoui, \textit{%
\ The minimizing vector theorem in symmetrized max-plus algebra}, J. Convex
Anal. (May 2019), available online.

\bibitem{PW} A. Pi\k{e}kosz and E. Wajch, \textit{Quasi-metrizability of
bornological biuniverses in ZF}, J. Convex Anal. 22, No. 4, (2015)
1041--1060.

\bibitem{Wh} J. H. C. Whitehead, \textit{\ A note on a theorem of Borsuk}, Bull. Amer. Math. Soc. 54 (1948) 1125--1132.
\end{thebibliography}
\end{document}